\theoremstyle{plain}
\newtheorem{theorem}{Theorem}[section]
\newtheorem{lemma}[theorem]{Lemma}
\newtheorem{proposition}[theorem]{Proposition}
\newtheorem{corollary}[theorem]{Corollary}
\theoremstyle{definition}
\newtheorem{definition}[theorem]{Definition}
\newtheorem{remark}[theorem]{Remark}
\numberwithin{equation}{theorem}
\begin{document}

\title[Quotients of schemes by $\alpha_p$ or $\mu_p$ actions in characteristic $p>0$.]{Quotients of schemes by $\alpha_p$ or $\mu_p$ actions in characteristic $p>0$.}
\author{Nikolaos Tziolas}
\address{Department of Mathematics, University of Cyprus, P.O. Box 20537, Nicosia, 1678, Cyprus}
\curraddr{Department of Mathematics, Princeton University, Fine Hall, Washington Road, Princeton, NJ 08544} 
\email{tziolas@ucy.ac.cy}
\thanks{The author is supported by a Marie Curie International Outgoing  Fellowship ,  grant no. PIOF-GA-2013-624345.}

%    General info
\subjclass[2000]{Primary 14L30, 14L15; Secondary 14J15, 14J50}
%\date{April 21, 1999}

%\dedicatory{This paper is dedicated to my wife Afroditi and my son Marko.}

\keywords{Algebraic geometry, group scheme, actions, quotient, positive characteristic }

\begin{abstract}
This paper studies schemes X defined over a field of characteristic $p>0$ which admit a nontrivial $\alpha_p$ or $\mu_p$ action. In particular,  the structure of the quotient map 
$X \rightarrow Y$ is investigated. Information on local properties of the quotient Y, as singularities and local Picard groups,  structure theorems for the quotient map and  adjunction formulas for the quotient map are obtained.  
\end{abstract}

\maketitle

% !TEX root = paper.tex
\section{Introduction}
Let $X$ be a scheme defined over a field $k$ of characteristic $p>0$. Suppose that $X$ admits an $\alpha_p$ or $\mu_p$-action. Then it is well known that the  quotient $\pi\colon X \rightarrow Y$ exists~\cite{Mu70}. It is a finite purely inseparable map of degree $p$. The purpose of this paper is to study the structure of the map $\pi$ and of the quotient $Y$. 

There are many reasons for studying this problem. Whenever there is a group scheme $G$ acting on a scheme $X$ defined over a field $k$, it is natural to consider the quotient of $X$ by $G$, if the quotient exists, and then by studying the quotient map  to get information about the geometry of $X$. If $G$ is finite and commutative defined over a separably closed field, then its structure is classified~\cite{Pi05}. If the characteristic of the base field $k$ is zero, then $G$ is smooth and it is the obtained by successive extensions of the group
 schemes $\mathbb{Z}/n\mathbb{Z}$. If the base field has characteristic $p>0$, then $G$ is not necessarily smooth and it is obtained by successive extensions of the group schemes $\mathbb{Z}/n\mathbb{Z}$, $\alpha_p$ and $\mu_p$. Therefore in order to study the quotient by $G$ it is essential to understand quotients by $\mathbb{Z}/n\mathbb{Z}$, $\alpha_p$ and $\mu_p$. If $n\not= p$, then quotients by $\mathbb{Z}/n\mathbb{Z}$ are similar to those in characteristic zero which have been studied extensively. The group scheme $\mu_p$ is diagonalizable and because of this $\mu_p$-actions are much easier to describe than $\alpha_p$ or $\mathbb{Z}/p\mathbb{Z}$ actions which are hard in characteristic $p>0$.
 
 One case of particular interest is when $G$ is a subgroup scheme of the automorphism scheme $\mathrm{Aut}(X)$ of a variety $X$. The study of quotients of schemes by subgroup schemes of their automorphism group is intimately related to the study of the moduli  of such schemes. In fact my main motivation for writing this paper is the study of the moduli space and moduli  stack of canonically polarized surfaces and its compactification, the moduli space of stable surfaces. In characteristic zero it is well known that the moduli space exists and moreover the corresponding stack is Deligne-Mumford. However, in positive characteristic this is not true any more. The reason for this failure is that in positive characteristic, the automorphism scheme $\mathrm{Aut}(X)$ of a canonically polarized surface $X$, even though it  is still a finite group scheme, it is  not necessarily smooth anymore. Hence  $\alpha_p$ or $\mu_p$ naturally appear as subgroup schemes of the automorphism scheme $\mathrm{Aut}(X)$ and therefore induce nontrivial $\alpha_p$ or $\mu_p$ actions on $X$.  Therefore the study of quotients of a stable surface by an $\alpha_p$ or $\mu_p$ action is essential in order to study the moduli stack of stable surfaces in positive characteristic.
 
 This paper was written in order to provide the basic technical tools in order to study the quotient of a stable surface by an $\alpha_p$ or $\mu_p$ action with the aim to investigate the moduli stack of stable surfaces in positive characteristic. Some of the results of this paper are not new. However, I intended this paper to be self contained and down to earth so that it can be used as reference for $\alpha_p$ or $\mu_p$ actions. For this reason, and sometimes due to the lack of proper reference, I included proofs of even some known results. In any case I comment on the originality or not of the results presented in this paper and give reference for those that are not new.  A general reference for the theory of group schemes and group scheme actions is~\cite{DG70}.
 
 This paper is organised as follows.
 
Section 2 introduces some terminology useful in this paper and a general structure theorem of purely inseparable maps is presented.

In section 3 it is shown that the existence of a nontrivial $\alpha_p$ or $\mu_p$ action on a scheme $X$ is equivalent to the existence of a nontrivial global vector field $D$ on $X$ such that either $D^p=0$ or $D^p=D$, respectively.

In section 4 local properties of the quotient $Y$ of a scheme $X$ by an $\alpha_p$ or $\mu_p$-action are presented. In particular information is provided about the possible singularities of $Y$. These results are in 
Propositions~\ref{prop3},~\ref{prop4}.

In section 5 the theory of $\alpha_L$-torsors is reviewed.  $\alpha_L$-torsors were, as far as I know, originally introduced by T. Ekedhal~\cite{Ek86} and under certain conditions they  are the simplest examples of quotients by
 $\alpha_p$ or $\mu_p$-actions. Theorem~\ref{sec3-th1} presents an explicit classification of $\alpha_L$-torsors and some of their important basic properties. Some of the results of this theorem were already known and some are new.

In section 6 quotients of a scheme by a nontrivial $\mu_p$ action are studied. Theorem~\ref{sec4-th1} presents a general structure theorem of such quotients. In particular it is shown that if $\pi \colon X \rightarrow Y$ is the quotient of a normal scheme $X$ by a nontrivial $\mu_p$-action, then $X$ is the normalization of an $\alpha_L$-torsor over $Y$. Moreover, if $X$ is smooth, then locally around any connected component of the fixed locus of the action, $\pi$ is an $\alpha_L$-torsor.

In section 6 quotients of a scheme by a nontrivial $\alpha_p$ action are studied. Theorem~\ref{sec5-th1} presents a general structure theorem of such quotients. 
 
In section 7 an adjunction formula for the quotient map $\pi \colon X \rightarrow Y$ of the quotient of a scheme $X$ by a nontrivial $\alpha_p$ or $\mu_p$-action is obtained. This is the content of Theorem~\ref{adjunction}. If $X$ is normal then this is well known~\cite{R-S76}. However if $X$ is not normal this is new. Quotients of non-normal schemes by $\alpha_p$ or $\mu_p$ actions are very important from the point of view of the moduli of stable varieties since non normal stable varieties naturally appear as limits of even smooth canonically polarised varieties and therefore their geometry needs to be investigated.

% !TEX root = /Users/nikos/Dropbox/papers/inseparable-maps/paper.tex
\section{Preliminaries.}

\subsection{Notation-Terminology.}
In this paper, unless otherwise specified,  all schemes are defined over an algebraically closed field $k$ of characteristic $p>0$. 

Let $X$ be a scheme defined over a field $k$. The tangent sheaf of $X$ is the sheaf $\mathcal{T}_X=\mathcal{H}om_X(\Omega_{X/k},\mathcal{O}_X)$.   $Der_k(X)$ denotes the space of global $k$-derivations of $X$ (or equivalently of global vector fields). It is canonically identified with $\mathrm{Hom}_X(\Omega_X,\mathcal{O}_X)=H^0(X,\mathcal{T}_X)$.

A vector field $D \in \mathrm{Der}_k(X)$ is called of additive type if $D^p=0$ and of multiplicative type if $D^p=D$.

A prime divisor $Z$ of $X$ is called an integral divisor of $D$ if and only if locally there is a derivation $D^{\prime}$ of $X$ such that $D=fD^{\prime}$, $f \in K(X)$,  $D^{\prime}(I_Z)\subset I_Z$ and $D^{\prime}(\mathcal{O}_X) \not\subset I_Z$ ~\cite{R-S76}. 

Let $\mathcal{F}$ be a coherent sheaf on $X$. By $\mathcal{F}^{[n]}$ we denote the double dual $(\mathcal{F}^{\otimes n})^{\ast\ast}$. 

A point $P \in X$ is called a normal crossing singularity if and only if
\[
\hat{\mathcal{O}}_{X,P}\cong \frac{L[[x_1,\ldots, x_n]]}{(x_1\cdots x_k)},
\]
where $L \subset \mathcal{O}_{X,P}$ is a coefficient field of $\mathcal{O}_{X,P}$.

\subsection{General structure theory of purely inseparable maps.}

Let $X$ be a normal variety defined over an algebraically closed field $k$ of characteristic $p>0$. Let $\mathcal{F} \subset \mathcal{T}_X$ be a coherent subsheaf of the tangent sheaf $\mathcal{T}_X$ of $X$.  $\mathcal{F}$ is called saturated if $\mathcal{T}_X/\mathcal{F}$ is torsion free and a subbundle of $\mathcal{T}_X$ if $\mathcal{T}_X/\mathcal{F}$ is locally free. 

Let $\mathcal{F} \subset \mathcal{T}_X$ be a coherent subsheaf of $\mathcal{T}_X$. $\mathcal{F}$ is called a foliation on $X$ if and only if $\mathcal{F}$ is closed under Lie bracket and it is $p$-closed~\cite{Ek87}. These conditions mean that if $U \subset X$ is any open subset of $X$ and any section $D \in \mathcal{F}(U)$, then $D^p \in \mathcal{F}(U)$ and $[D,D^{\prime}]\in \mathcal{F}(U)$ for any $D^{\prime}\in \mathcal{F}(U)$, $D, D^{\prime}$ viewed as a $k$-derivations of $\mathcal{O}_U$.            
                                                                                                                                                             
Let $\mathcal{F} \subset T_X$ be a foliation on $X$. We define $Ann(\mathcal{F})\subset \mathcal{O}_X$ to be the sheaf of subrings of $\mathcal{O}_X$ on $X$ defined by 
\[
Ann(\mathcal{F}) (U)=\{a\in \mathcal{O}_X(U)|\; D(a)=0, \forall D\in \mathcal{F}(U)\},
\]
where $U \subset X$ is any open subset, and the sections of $\mathcal{F}$ over $U$ are viewed as $k$-derivations of $\mathcal{O}_U$. Then $\mathcal{O}_X^p \subset Ann(\mathcal{F})$ and hence there exists a factorization 
\begin{equation}\label{sec0-diag-1}
\xymatrix{ 
X \ar[rr]^F\ar[dr]_{\pi}  & & X^{(1)} \\
  &   Y \ar[ur]_{\phi} & \\
}
\end{equation}
where $F \colon X \rightarrow X^{(1)}$ is the relative Frobenius and $Y=\mathrm{Spec} \left( Ann(\mathcal{F}) \right)$. Conversely, let $Y$ be any scheme that fits in a commutative diagram as above and let $\mathcal{F}=Ann(\mathcal{O}_Y)\subset \mathcal{T}_X$ be defined by $\mathcal{F}(U)=\{D \in \mathcal{T}_X(U) |\; D(a)=0, \forall a\in \mathcal{O}_Y(U)\}$, for any open $U \subset X$. Then $\mathcal{F}$ is a foliation. The next proposition says that these two operations establish a one-to-one correspondence between foliations and normal varieties between $X$ and $X^{(1)}$.
  
\begin{proposition}\label{general-structure-theory}\cite{Ek87}
Let $X$ be a normal variety defined over an algebraically closed field of characteristic $p>0$. 
\begin{enumerate}
\item Let $\mathcal{F} \subset \mathcal{T}_X$ be a foliation on $X$ of rank $r$ and let $Y =\mathrm{Spec}\left(Ann(\mathcal{F})\right)$. Then $[K(X):K(Y)]=p^r$.
\item Let $\mathcal{F}\subset \mathcal{T}_X$ be a foliation on $X$ and $Y$ a normal variety between $X$ and $X^{(1)}$ as in diagram~\ref{sec0-diag-1}. Then $Ann(Ann(\mathcal{F}))=\mathcal{F}$ and $Ann(Ann(\mathcal{O}_Y))=\mathcal{O}_Y$. Hence there exists a one to one correspondence between foliations on $X$ and normal varieties $Y$ between $X$ and $X^{(1)}$. The correspondence is given by $\mathcal{F}\mapsto \mathrm{Spec}(Ann\left(\mathcal{F})\right)$ and $Y \mapsto Ann(\mathcal{O}_Y)$.
\item Suppose that $X$ is smooth. Then $Y$ is smooth if and only if $\mathcal{F}$ is a subbundle.
\item Suppose that $X$ is smooth and $\mathcal{F}$ be a subbundle of $\mathcal{T}_X$. Let $\pi \colon X \rightarrow Y =\mathrm{Spec}\left(Ann(\mathcal{F}\right)$ the natural map. Then there exists a natural exact sequence
\[
0 \rightarrow F^{\ast}(\mathcal{F}^{\ast}) \rightarrow \pi^{\ast}\Omega_{Y/k} \rightarrow \Omega_{X/k} \rightarrow \mathcal{F}^{\ast} \rightarrow 0,
\] 
where $F \colon X \rightarrow X$ is the absolute Frobenious map. Therefore 
\[
\omega_X=\pi^{\ast}\omega_Y \otimes \left(\wedge^r\mathcal{F}\right)^{p-1},
\]
where $r$ is the rank of $\mathcal{F}$.
\end{enumerate}
\end{proposition}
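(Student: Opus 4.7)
The plan is to reduce parts (1) and (2) to the classical Jacobson correspondence for purely inseparable field extensions of height one, and then upgrade the field-theoretic result to a sheaf-theoretic one via saturation and reflexivity arguments that exploit normality of $X$ and $Y$. Parts (3) and (4) are then local computations carried out in a regular system of parameters adapted to the foliation.

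For (1), I would pass to the generic point $\eta$ of $X$. The stalk $\mathcal{F}_{\eta}\subset \mathcal{T}_{X,\eta}=\mathrm{Der}_k(K(X))$ is a restricted $p$-Lie subalgebra of dimension $r$ over $K(X)$, since $\mathcal{F}$ is closed under bracket and $p$-th power. Jacobson's Galois-type correspondence for purely inseparable extensions of exponent one then gives $[K(X):L]=p^r$, where $L$ is the field of $\mathcal{F}_\eta$-invariants. Since $L=K(Y)$ by the definition of $Y=\mathrm{Spec}(Ann(\mathcal{F}))$, this establishes the degree formula. For (2), the same correspondence yields $Ann(Ann(\mathcal{F}))_\eta=\mathcal{F}_\eta$ and $Ann(Ann(\mathcal{O}_Y))_\eta=\mathcal{O}_{Y,\eta}$. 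To globalize, I would observe that $\mathcal{F}$ is saturated in $\mathcal{T}_X$ (otherwise the $p$-closure would enlarge its generic rank) and that $Ann(\mathcal{F})$ is integrally closed in $\mathcal{O}_X$; since $X$ and $Y$ are normal and the equalities hold at every codimension-one point, reflexivity of both sides upgrades them to equalities of sheaves.

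For (4), I would work \'etale-locally. Since $\mathcal{F}$ is a subbundle of rank $r$, one can choose a regular system of parameters $x_1,\ldots,x_n$ on an affine open $U\subset X$ and a local frame $D_1,\ldots,D_r$ of $\mathcal{F}$ dual to $dx_1,\ldots,dx_r$. Then $\mathcal{O}_Y(U)$ is generated over $k$ by $x_1^p,\ldots,x_r^p,x_{r+1},\ldots,x_n$. The conormal sequence
\[
\pi^{\ast}\Omega_{Y/k}\longrightarrow \Omega_{X/k}\longrightarrow \Omega_{X/Y}\longrightarrow 0
\]
combined with a direct computation in these coordinates identifies $\Omega_{X/Y}\cong \mathcal{F}^{\ast}$, and shows that the kernel on the left is spanned locally by $d(x_i^p)$ for $i\le r$, whose transition functions are the $p$-th powers of those of $\mathcal{F}^{\ast}$; this kernel is therefore naturally $F^{\ast}(\mathcal{F}^{\ast})$. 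Taking top exterior powers of the resulting four-term sequence and using $\det F^{\ast}(\mathcal{F}^{\ast})=(\wedge^r\mathcal{F}^{\ast})^{\otimes p}$ produces the adjunction formula $\omega_X=\pi^{\ast}\omega_Y\otimes (\wedge^r\mathcal{F})^{p-1}$.

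Finally for (3): if $\mathcal{F}$ is a subbundle, the local model just described exhibits $Y$ as smooth. Conversely, if $Y$ is smooth, then $\pi^{\ast}\Omega_{Y/k}$ and $\Omega_{X/k}$ are locally free of the same rank, and the exact sequence of (4) presents $\mathcal{F}^{\ast}=\Omega_{X/Y}$ as the cokernel of a map of locally free sheaves whose rank is constant (equal to the generic rank $r$); hence $\mathcal{F}^{\ast}$, and therefore $\mathcal{F}$, is locally free. The main obstacle I expect is the verification that the four-term sequence in (4) is exact on the left and that the kernel is canonically $F^{\ast}(\mathcal{F}^{\ast})$ rather than merely some locally free sheaf of the right rank; this requires a careful comparison of the conormal sequences of $\pi$ and of $F=\phi\circ\pi$, together with an explicit tracking of transition functions as $p$-th powers.
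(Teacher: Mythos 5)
The paper itself does not prove this proposition; it quotes it from [Ek87] and only remarks that it extends Jacobson's correspondence, so your proposal has to stand on its own. For (1)--(2) the reduction to the generic point and Jacobson's exponent-one correspondence is indeed the right skeleton (and the one the cited source follows), but two steps are not sound as written. First, your parenthetical claim that a foliation is automatically saturated ``otherwise the $p$-closure would enlarge its generic rank'' is not an argument and the claim is false for the definition used in this paper: $\mathcal{F}=x^p\mathcal{T}_{\mathbb{A}^1_k}\subset\mathcal{T}_{\mathbb{A}^1_k}$ is closed under Lie bracket and $p$-th powers (Hochschild's formula), is not saturated, and $Ann(Ann(\mathcal{F}))=\mathcal{T}_{\mathbb{A}^1_k}\neq\mathcal{F}$. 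So saturation must be taken as part of the definition (as in Ekedahl), not deduced; once it is, $Ann(Ann(\mathcal{F}))=\mathcal{F}$ follows from generic equality alone, with no reflexivity needed, since any subsheaf of $\mathcal{T}_X$ containing $\mathcal{F}$ with the same generic stalk lies in the saturation. Second, ``the equalities hold at every codimension-one point'' is asserted, not proved; for $Ann(Ann(\mathcal{O}_Y))=\mathcal{O}_Y$ the efficient argument is that a local section $a$ of the left side lies in $K(Y)$ by the generic statement and satisfies $a^p\in\mathcal{O}_Y$, hence $a\in\mathcal{O}_Y$ by normality of $Y$. (Also $L=K(Y)$ needs the small observation $a/b=ab^{p-1}/b^p$ with $b^p\in Ann(\mathcal{F})$.)

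The more serious gap is in (3)--(4). The ``adapted regular system of parameters'' with a frame $D_i=\partial/\partial x_i$ of $\mathcal{F}$ and $\mathcal{O}_Y$ generated by $x_1^p,\ldots,x_r^p,x_{r+1},\ldots,x_n$ is precisely the characteristic-$p$ Frobenius-type normal form for involutive $p$-closed subbundles; it is the entire content of (3) and the substance of (4), it requires a genuine proof (induction on $r$, producing $z$ with $D(z)=1$, Hochschild's formula, Matsumura Thm.\ 27.3-type arguments), and it holds at the level of complete local rings (or \'etale-locally with care), not Zariski-locally in the form ``$\mathcal{O}_Y(U)$ is generated over $k$ by $x_1^p,\ldots,x_r^p,x_{r+1},\ldots,x_n$''. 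Asserting it reduces your proposal to a restatement of the hard part. Moreover the converse direction of (3) is circular as written: the four-term sequence of (4) was established under the subbundle hypothesis, so it cannot be invoked to prove that hypothesis; unconditionally you only have the right-exact conormal sequence, the constancy of the rank of $\pi^{\ast}\Omega_{Y/k}\rightarrow\Omega_{X/k}$ is exactly what has to be shown, and local freeness of $\mathcal{F}=\mathcal{H}om_X(\Omega_{X/Y},\mathcal{O}_X)$ does not follow merely from $\Omega_{X/Y}$ being a cokernel of a map of bundles (a dual sheaf is reflexive but need not be locally free in dimension $\geq 3$). The determinant computation giving $\omega_X=\pi^{\ast}\omega_Y\otimes(\wedge^r\mathcal{F})^{p-1}$ is correct once the four-term sequence is in place, and you are right that the canonical identification of the kernel with $F^{\ast}(\mathcal{F}^{\ast})$ (note $d(x_i^p)=0$ in $\Omega_{X/k}$, so the kernel is spanned by $\pi^{\ast}dy_i$, not by $d(x_i^p)$) is a point needing care.
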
    
The correspondence given in the previous proposition between foliations on $X$ and normal varieties between $X$ and $X^{(1)}$ is a natural extension of the Jacobson correspondence between subfields of a field $K$ containing $K^p$ and sub-Lie algebras of $Der(K)$ closed under $p$-powers~\cite{Jac64},~\cite{B03}.                                                                                                                                                                                                                                                                                                                                                                                                                                                                                                                                                                                                                                                                                                                                       
% !TEX root = paper.tex

\section{Relation between $\alpha_p$, $\mu_p$ actions and derivations.}
The purpose of this section is to show the relation between nontrivial $\mu_p$ or $\alpha_p$ actions and global vector fields. 

\begin{proposition}\label{sec1-prop1}
Let $X$ be a scheme of finite type over an algebraically closed field $k$ of characteristic $p>0$. $X$ admits a nontrivial $\alpha_p$ or $\mu_p$ action if and only if $X$ has a nontrivial global vector field $D$ such that $D^p=D$ or $D^p=0$, respectively.
\end{proposition}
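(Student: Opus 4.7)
The plan is to reduce to the affine case, establish a bijection between $\alpha_p$- or $\mu_p$-coactions on a $k$-algebra $A$ and derivations of $A$ satisfying the appropriate $p$-power condition, then use naturality to sheafify. Since $\alpha_p$ and $\mu_p$ are affine group schemes, an action of either on $X$ is equivalent, on affine opens $\mathrm{Spec}(A) \subset X$, to a coalgebra coaction $\mu\colon A \to A \otimes_k H$ where $H$ is the corresponding Hopf algebra; the condition of being an action is local on $X$, so it suffices to treat the affine case and then verify that the associated derivations glue.

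For $\alpha_p$, with $H = k[t]/(t^p)$ and $\Delta(t) = t \otimes 1 + 1 \otimes t$, I would write a coaction as $\mu(a) = \sum_{i=0}^{p-1} D_i(a) \otimes t^i$ with $D_0 = \mathrm{id}$. The coassociativity identity $(\mathrm{id} \otimes \Delta)\mu = (\mu \otimes \mathrm{id})\mu$, expanded via $\Delta(t^i) = \sum_j \binom{i}{j} t^j \otimes t^{i-j}$, forces the divided-power relation $D_j \circ D_i = \binom{i+j}{i} D_{i+j}$ whenever $i+j < p$, together with $D_j D_i = 0$ whenever $i+j \geq p$. A straightforward induction gives $D_i = D_1^i/i!$, and the vanishing at $i+j = p$ (using that $(p-1)!$ is a unit in $\mathbb{F}_p$) forces $D := D_1$ to be a $k$-derivation with $D^p = 0$. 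The converse construction is direct. For $\mu_p$, with $H = k[t]/(t^p-1) = k[\mathbb{F}_p]$ the group algebra, the standard equivalence for actions of diagonalizable group schemes identifies $\mu_p$-coactions on $A$ with $\mathbb{F}_p$-gradings $A = \bigoplus_{i \in \mathbb{F}_p} A_i$. Setting $D(a) := i\, a$ for $a \in A_i$ yields a $k$-derivation with $D^p = D$ by Fermat's little theorem. Conversely, given $D$ with $D^p = D$, the polynomial $t^p - t = \prod_{i \in \mathbb{F}_p}(t-i)$ annihilates $D$ and splits into distinct linear factors, yielding an eigenspace decomposition that the Leibniz rule promotes to an $\mathbb{F}_p$-grading.

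Both correspondences are functorial in $A$, hence sheaf-theoretic in $X$: a global action restricts compatibly to affine opens, producing local derivations that glue (by uniqueness) to a global vector field $D \in H^0(X, \mathcal{T}_X)$, and conversely a global vector field of the appropriate type produces compatible local coactions that patch to a group-scheme action. Nontriviality is preserved in both directions. The main obstacle is the $\alpha_p$ side: extracting a single derivation $D$ from the coaction requires the inductive identification of the higher coefficients with divided powers $D^i/i!$, and the crucial $D^p = 0$ comes from the vanishing forced by the $t^p$-truncation. The $\mu_p$ side is much more transparent once one invokes the diagonalizable-group equivalence between coactions and gradings.
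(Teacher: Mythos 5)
Your proposal is correct, and on the $\alpha_p$ side it is essentially the paper's argument: expand the coaction as $\mu(a)=\sum_i D_i(a)\otimes t^i$, use coassociativity with $\Delta(t)=t\otimes 1+1\otimes t$ to identify the higher coefficients as divided powers $D_1^i/i!$, and read off $D_1^p=0$ from the truncation at $t^p$ (the paper does exactly this coefficient comparison, phrased as equating coefficients of $t^{k-1}\otimes t$ and of $t\otimes t^{p-1}$), with the converse given by the same explicit Taylor-type formula. Where you genuinely diverge is the $\mu_p$ case, which the paper dismisses as ``similar and omitted'': instead of redoing the coefficient computation with the comultiplication $t\mapsto t\otimes t$, you invoke the equivalence between coactions of the diagonalizable group scheme $\mu_p=\mathrm{Spec}\,k[t]/(t^p-1)$ and $\mathbb{Z}/p\mathbb{Z}$-gradings, and pass between a grading $A=\oplus_i A_i$ and the derivation $D(a)=ia$ with $D^p=D$ via Fermat and the eigenspace decomposition of an operator killed by $x^p-x$. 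This buys a cleaner, computation-free treatment of the multiplicative case and in effect anticipates the decomposition $A=\oplus_k L_k(D)$ that the paper later establishes by hand (Lemma~\ref{sec4-lemma1} and Proposition~\ref{mu-p-decomposition}), at the cost of quoting the diagonalizable-group formalism; the paper's intended route stays entirely elementary and uniform for both group schemes. Two small points to tighten: the coaction must be required to be a $k$-algebra homomorphism (not merely a comodule structure), since that is what makes $D_1$ a derivation and the eigenspace decomposition an algebra grading; and in the globalization step it is worth noting that because $\alpha_p$ and $\mu_p$ are infinitesimal, every open subset is invariant, so the coaction does restrict to arbitrary affine opens and your gluing argument goes through (the paper sidesteps this by working directly with the sheaf map $\mathcal{O}_X\to\mathcal{O}_X\otimes_k k[t]/(t^p)$).
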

\begin{proof}

Suppose that $X$ has a nontrivial vector field $D$ such that either $D^p=0$ or $D^p=D$. Define the map
\[
\Phi \colon \mathcal{O}_X \rightarrow \frac{\mathcal{O}_X[t]}{(t^p)}
\]
by setting 
\[
\Phi(a)=\sum_{k=0}^{p-1}\frac{D^ka}{k!}t^k
\]
If $D$ is of additive type (i.e., $D^p=0$) then it is straightforward to check that $\Phi$ defines an action $\alpha_p$ on $X$ and if it is of multiplicative type (i.e., $D^p=D$) then it defines an action of $\mu_p$ on $X$.

Suppose now that $X$ admits a nontrivial $\alpha_p$ or $\mu_p$ action. I will show that $X$ admits a nontrivial vector field $D$ of either additive or multiplicative type, respectively. I will only do the case when an $\alpha_p$ action exists. The other is similar and is omitted. Suppose that $X$ admits a nontrivial $\alpha_p$-action. Let \[
\mu \colon \alpha_p \times X \rightarrow X
\]
be the map that defines the action. Let 
\[
\mu^{\ast} \colon \mathcal{O}_X \rightarrow \mathcal{O}_X \otimes_k \frac{k[t]}{(t^p)}
\]
be the corresponding map on the sheaf of rings level. The additive group $\alpha_p$ is $\mathrm{Spec}\frac{k[t]}{(t^p)}$ as a scheme with group scheme structure given by
\[
m^{\ast}\colon \frac{k[t]}{(t^p)} \rightarrow \frac{k[t]}{(t^p)} \otimes \frac{k[t]}{(t^p)} \]
defined by $m^{\ast}(t)=1\otimes t + t \otimes 1$. Then by the definition of group scheme action, there is a commutative diagram.
\[
\xymatrix{
\mathcal{O}_X \ar[r]^{\mu^{\ast}} \ar[d]^{\mu^{\ast}} & \mathcal{O}_X \otimes_k \frac{k[t]}{(t^p)} \ar[d]^{\mu^{\ast}\otimes 1} \\
\mathcal{O}_X \otimes_k \frac{k[t]}{(t^p)} \ar[r]^{1\otimes m^{\ast}} & \mathcal{O}_X \otimes_k \frac{k[t]}{(t^p)} \otimes_k \frac{k[t]}{(t^p)}\\
}
\]
The map $\mu^{\ast}$ is given by \[
\mu^{\ast}(a)= a\otimes 1 +\Phi_1(a)\otimes t +\sum_{k=2}^{p-1}\Phi_k(a)\otimes t^k,
\] 
where $\Phi_k \colon \mathcal{O}_X \rightarrow \mathcal{O}_X$ are additive maps. The fact that $\mu^{\ast}$ is a sheaf of rings map shows that $\Phi_1$ is a derivation which we call $D$. I will show by induction that $\Phi_k(a)=D^ka/k!$. From this it follows that any $\alpha_p$ action is induced by a global vector field $D$ as in the first part of the proof.

From the commutativity of the previous diagram and the definition of $\mu^{\ast}$ and $m^{\ast}$ it follows that
\begin{gather*}
a \otimes 1 \otimes 1 + Da \otimes t \otimes 1 +\sum_{k=2}^{p-1} \Phi_k(a) \otimes t^k \otimes 1+ Da \otimes 1 \otimes t +D^2a\otimes t \otimes t + \\
\sum_{k=2}^{p-1}\Phi_k(Da)\otimes t^k 
\otimes t + \sum_{k=2}^{p-1}\left( \Phi_k(a)\otimes 1 \otimes t^k +D(\Phi_k(a))\otimes t \otimes t^k +\sum_{s=2}^{p-1}\Phi_s(\Phi_k(a))\otimes t^s \otimes t^k\right) =\\
a\otimes 1 \otimes 1 +Da \otimes t \otimes 1 + Da \otimes 1 \otimes t +\sum_{k=2}^{p-1}\sum_{s=0}^{k}\Phi_k(a) \binom{k}{s}t^s \otimes t^{k-s}.
\end{gather*}
Equating the coefficients of $t^{k-1}\otimes t $ on both sides of the equation we get that $k\Phi_k(a)t^{k-1}\otimes t = \Phi_{k-1}(Da)\otimes t^{k-1} \otimes t$. By induction, 
$\Phi_{k-1}=D^{k-1}/(k-1)!$. Therefore $\Phi_k(a)=D^ka/k!$ as claimed. Moreover, equating the coefficients of $t \otimes t^{p-1}$ it follows that $D^p=0$.

\end{proof}

% !TEX root = /Users/nikos/Dropbox/papers/paper-vector-fields/paper/Generalized-version-18-05-15/Spilt-Versions/Quotients/paper.tex

\section{Existence and basic properties of the quotient.}

Let $X$ be a scheme of finite type over a field $k$ of characteristic $p>0$. Suppose that $X$ admits a nontrivial $\alpha_p$ or $\mu_p$ action. Then it is well known~\cite{Mu70} that the quotient $\pi \colon X \rightarrow Y$ of $X$ by the $\alpha_p$ or $\mu_p$ action exists as an algebraic scheme and that $\pi$ is a finite morphism. In particular, by Proposition~\ref{sec1-prop1} the action is induced by a global derivation $D$ on $X$ such that either $D^p=0$ or $D^p=D$. Then locally if $X =\mathrm{Spec}A$ then $Y=\mathrm{Spec}B$, where $B=A^D=\{a\in A , \; Da=0\}$. Moreover, by Proposition~\ref{general-structure-theory}, $\pi$ corresponds to the foliation on $X$ defined by the saturation $\mathcal{F}$ of the subsheaf of $\mathcal{T}_X$ generated by $D$.

\subsection{Local properties of the quotient.}
The purpose of this section is to study the relation between the formation of quotients by $\alpha_p$ or $\mu_p$ actions and the operations of completion and localization.

The next proposition shows that the formation of quotient commutes with completion. This allows the calculation of the singularities of the quotient $Y$ by passing to the completion. 

\begin{proposition}\label{sec2-prop1}
Let $(A,m_A)$ be a local ring and $D\in \mathrm{Der}(A)$ a derivation of $A$ such that $D(m_A)\subset m_A$. Then $D$ extends to a derivation $\hat{D}$ of the completion $\hat{A}$ of $A$ along $m_A$ and moreover 
\[
\left( \hat{A} \right)^{\hat{D}}=\widehat{A^D},
\]
where $\widehat{A^D}$ is the completion of $A^D$ along the maximal ideal $m_A\cap A^D$.
\end{proposition}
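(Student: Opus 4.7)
The plan has two steps: first extend $D$ to $\hat{A}$ by continuity, and then identify the kernel of the extension by exploiting the finite $A^D$-module structure of $A$ that is forced by the setting of this section.

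For the extension, the hypothesis $D(m_A) \subset m_A$ together with the Leibniz rule gives, by induction on $n$, that $D(m_A^n) \subset m_A^n$ for every $n \geq 1$: if $x_1 \cdots x_n \in m_A^n$ with each $x_i \in m_A$, then $D(x_1 \cdots x_n) = \sum_i x_1 \cdots D(x_i) \cdots x_n$ is again a sum of products of $n$ elements of $m_A$. Hence $D$ is continuous for the $m_A$-adic topology and extends uniquely to a continuous additive map $\hat{D} \colon \hat{A} \to \hat{A}$ by $\hat{D}(\lim a_k) = \lim D(a_k)$. The Leibniz identity passes to limits, so $\hat{D}$ is a derivation.

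For the equality $(\hat{A})^{\hat{D}} = \widehat{A^D}$, I would use that in the setting of this section $D$ satisfies $D^p=0$ or $D^p=D$, so the quotient map is finite of degree $p$ and $A$ is a finite $A^D$-module. Then $A^D$ is Noetherian local with maximal ideal $\mathfrak{n} := m_A \cap A^D$; since $A/\mathfrak{n}A$ is a finite-dimensional $A^D/\mathfrak{n}$-algebra it has nilpotent maximal ideal, so the $m_A$-adic and $\mathfrak{n}$-adic filtrations on $A$ are cofinal. This gives the canonical identification
\[
\hat{A} \;\cong\; A \otimes_{A^D} \widehat{A^D}.
\]
Viewing $0 \to A^D \to A \xrightarrow{D} A$ as a complex of $A^D$-modules and tensoring with the flat $A^D$-algebra $\widehat{A^D}$ preserves exactness, yielding
\[
0 \to \widehat{A^D} \to \hat{A} \xrightarrow{1 \otimes D} \hat{A}.
\]
The maps $1 \otimes D$ and $\hat{D}$ are both continuous derivations of $\hat{A}$ that restrict to $D$ on the dense subring $A$, so they coincide, and reading off the kernel gives the desired equality.

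The main obstacle is controlling the kernel of $\hat{D}$. A direct attack via Cauchy sequences $a_k \to \hat{a}$ with $Da_k \to 0$ does not obviously produce a sequence of genuine $D$-invariants converging to $\hat{a}$. The structural identification $\hat{A} \cong A \otimes_{A^D} \widehat{A^D}$, which rests on finiteness of $A$ over $A^D$ (supplied by the $\alpha_p$ or $\mu_p$ hypothesis), converts the problem into a clean flat base change computation and circumvents this difficulty.
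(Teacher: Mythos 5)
Your proposal is correct and follows essentially the same route as the paper: both arguments rest on the finiteness of $A$ over $A^D$ (so that $\hat{A} \cong A \otimes_{A^D} \widehat{A^D}$) and on flat base change applied to the exact sequence $0 \to A^D \to A \xrightarrow{D} A$. Your additional details (extending $D$ by continuity and identifying $1 \otimes D$ with $\hat{D}$) only make explicit steps the paper leaves implicit.
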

\begin{proof}
Let $B=A^D$. Then $A$ is a finitely generated $B$-module~\cite{Mu70}. Hence $\hat{A}=A\otimes_B \hat{B}$. Let $\Phi \colon A \rightarrow A$ be defined by $\Phi(a)=Da$, $a\in A$. Then $\Phi$ is a morphism of $B$-modules. Moreover there is an exact sequence of $B$-modules
\[
0 \rightarrow B \rightarrow A \stackrel{\Phi}{\rightarrow} A
\]
Tensoring with $\hat{A}$ over $B$ and since $\hat{A}$ is a flat $B$-module it follows that the following sequence is exact
\[
0 \rightarrow \hat{B} \rightarrow \hat{A} \stackrel{\hat{\Phi}}{\rightarrow} \hat{A}
\]
But $\hat{\Phi}$ is given by $\hat{\Phi}(\hat{a})=\hat{D}(\hat{a})$, for any $\hat{a} \in \hat{A}$. Therefore $\hat{B}=\left(\hat{A}\right)^{\hat{D}}$, as claimed.

\end{proof}

The next proposition shows that the formation of quotient commutes with localization.

\begin{proposition}~\label{sec2-prop2}
Let $A$ be a ring and $D\in \mathrm{Der}(A)$ a derivation of $A$. Let $P \subset A$ be a prime ideal. Then
\[
\left( A_P\right)^D=\left( A^D\right)_Q,
\]
where $Q=A^D \cap P$.
\end{proposition}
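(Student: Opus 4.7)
The plan is to first extend $D$ canonically to a derivation on $A_P$ by the quotient rule $D(a/s) = (s\,Da - a\,Ds)/s^2$, so that the statement even makes sense, and then to establish the equality by showing both inclusions. The decisive algebraic fact, valid in characteristic $p$ for any derivation, is that $D(a^p) = p\,a^{p-1}\,Da = 0$, so $A^p \subseteq A^D$. This will allow me to replace any denominator in $A$ by its $p$-th power and thereby always arrange denominators to live in $A^D$; it is also the same trick that shows the natural map $(A^D)_Q \to A_P$ is injective, since an annihilator $t \in A \setminus P$ of a numerator yields an annihilator $t^p \in A^D \setminus Q$.

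The easy inclusion $(A^D)_Q \subseteq (A_P)^D$ will be immediate: if $s \in A^D \setminus Q$, then $s \notin P$ by the very definition $Q = A^D \cap P$, so $a/s \in A_P$ for $a \in A^D$, and the quotient rule gives $D(a/s) = 0$ at once. For the reverse inclusion I would take $\alpha \in (A_P)^D$ and write $\alpha = a/s$ with $s \in A \setminus P$. Multiplying numerator and denominator by $s^{p-1}$, I may assume the denominator is $u := s^p \in A^D \setminus Q$, and the numerator is $b := a s^{p-1}$. Since $u \in A^D$, the hypothesis $D(b/u) = 0$ in $A_P$ collapses to $Db/u = 0$ in $A_P$, equivalently $w\,Db = 0$ in $A$ for some $w \in A \setminus P$. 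Raising to the $p$-th power gives $w^p\,Db = 0$, and then $D(w^p b) = D(w^p)\,b + w^p\,Db = 0$, so $w^p b \in A^D$. Since $w^p u \in A^D \setminus Q$ and $\alpha = (w^p b)/(w^p u)$, the element $\alpha$ lies in $(A^D)_Q$, as required.

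The main obstacle I anticipate is precisely this clearing-of-denominators step, where the raw hypothesis only supplies an auxiliary annihilator $w$ inside the ambient ring $A$, not inside $A^D$. The characteristic-$p$ identity $A^p \subseteq A^D$ is the right lever because the $p$-power operation both promotes elements into $A^D$ and preserves non-membership in the prime ideal $P$; without it one would need some stronger input such as the finite generation of $A$ over $A^D$ used in the completion argument of Proposition~\ref{sec2-prop1}. The present localization argument should in fact be somewhat lighter than the completion case, since localization is exact on the nose and no flatness input is needed.
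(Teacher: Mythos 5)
Your proof is correct and follows essentially the same route as the paper: both rest on rewriting $a/s=(as^{p-1})/s^p$ so that the denominator $s^p$ lies in $A^D\setminus Q$, using $A^p\subseteq A^D$ in characteristic $p$. Your extra step with the annihilator $w$ and its $p$-th power (and the injectivity of $(A^D)_Q\to A_P$) is in fact slightly more careful than the paper's argument, which passes from $s^pD(a/s)=0$ in $A_P$ to $D(as^{p-1})=0$ in $A$ without comment — harmless when $A$ is a domain, the case of interest, but your version covers general $A$.
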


\begin{proof}
Clearly $\left( A^D\right)_Q \subset \left( A_P\right)^D$. Let $a/s \in \left( A_P\right)^D$. Then $D(a/s)=0$. But $a/s=(as^{p-1})/s^p$. Now $s^p \in A^D$, $s^p \not\in P \cap A^D=Q$ and 
\[
D(as^{p-1})=D(as^p/s)=s^p D(a/s)=0.
\]
Hence $a/s \in \left( A^D\right)_Q $ and therefore
\[
\left( A_P\right)^D=\left( A^D\right)_Q,
\]
as claimed.
\end{proof}

\begin{definition}\label{sec2-def1}
Let $A$ be a ring and $D\in \mathrm{Der}(A)$ a derivation of $A$. Let $B=A^D=\{a\in A |\; Da=0\}$. Then for any $k \in \{0,1,\ldots,p-1\}$ we define
\begin{enumerate}
\item \[
L_k(D)=\{a\in A |\; Da=ka \},
\]
\item \[
E_k(D)=\{a\in A |\; D^ka=0\}.
\]
\end{enumerate}
The sets $L_k(D)$ and $E_k(D)$ have natural structures of $B$-modules. 
\end{definition}\label{def-of-L-N}
As will be shown in sections~\ref{mu-p} and~\ref{alpha-p}, the $B$-modules $L_k(D)$ and $E_k(D)$ carry very important information about the structure of the quotient $B$. The first one in the case when $D^p=D$ and hence induces an action of $\mu_p$ on $\mathrm{Spec} A$ and $E_k(D)$ in the case when $D^p=0$ and hence induces an action of $\alpha_p$ on $\mathrm{Spec} A$. 

The next proposition shows that $L_k(D)$ and $E_k(D)$ commute with completion and localization. This allows their calculation in local coordinates.

\begin{proposition}\label{subsec2-prop3}
Let $A$ be a Noetherian ring  and $D\in \mathrm{Der}(A)$ a derivation of $A$. Let $P \subset A$ be a prime ideal of $A$. Then
\begin{enumerate}
\item $D$ lifts naturally to a derivation $D_P$ of the localization $A_P$ and moreover
\begin{gather*}
L_k(D_P) \cong (L_k(D))_Q,\\
E_k(D_P) \cong (E_k(D))_Q,
\end{gather*}
where $Q=A^D\cap P$.
\item If $D(P) \subset P$, then $D$ lifts to a derivation $\hat{D}$ of the completion $\hat{A}$ of $A$ at $P$ and moreover,
\begin{gather*}
\widehat{L_k(D)}\cong L_k(\hat{D}),\\
\widehat{E_k(D)}\cong E_k(\hat{D})
\end{gather*}
\end{enumerate} 
\end{proposition}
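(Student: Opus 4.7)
\emph{Approach.} I would treat the two parts separately, exploiting in both the characteristic $p$ identity $D(s^p)=ps^{p-1}Ds=0$, which places $s^p$ in $B=A^D$ for every $s\in A$. In part~(1) this lets one rewrite any fraction so the denominator lies in $B\setminus Q$; in part~(2) it underlies the purely inseparable nature of $A/B$ and the flat base change argument.

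\emph{Part (1).} Extend $D$ to $A_P$ by the standard quotient rule $D_P(a/s)=(sDa-aDs)/s^2$. The natural maps
\[
(L_k(D))_Q\longrightarrow L_k(D_P),\qquad (E_k(D))_Q\longrightarrow E_k(D_P)
\]
sending $a/u\mapsto a/u$ (with $u\in B\setminus Q$) are well-defined because $Du=0$ forces $D_P^j(a/u)=D^ja/u$ for all $j\ge 1$. For surjectivity, given $x/s\in L_k(D_P)$, unwinding $D_P(x/s)=kx/s$ to an equality in $A$ produces $t\in A\setminus P$ with $t(sDx-xDs-kxs)=0$. Rewriting $x/s=xs^{p-1}/s^p$ with $s^p\in B\setminus Q$, the characteristic $p$ identity
\[
D(xs^{p-1})-kxs^{p-1}=s^{p-2}(sDx-xDs-kxs)
\]
together with multiplication by $t^p\in B$ (which commutes with $D$) shows $t^pxs^{p-1}\in L_k(D)$, so $x/s=(t^pxs^{p-1})/(t^ps^p)$ lies in the image of $(L_k(D))_Q$. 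The argument for $E_k$ is parallel: $D_P^k(x/s)=0$ translates to $tD^k(xs^{p-1})=0$ in $A$, and then $D^k(t^pxs^{p-1})=t^pD^k(xs^{p-1})=0$. Injectivity follows from the same $s^p$-trick applied to any putative kernel element.

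\emph{Part (2).} The hypothesis $D(P)\subset P$ together with the Leibniz rule gives $D(P^n)\subset P^n$ by induction, since every summand of $D(x_1\cdots x_n)$ consists of $n-1$ factors from $P$ times a factor $Dx_i\in P$. Hence $D$ is continuous in the $P$-adic topology and extends uniquely to a continuous derivation $\hat D$ of $\hat A$. For the module identifications I would apply the flat base change $-\otimes_B\hat B$ (with $\hat B$ the $Q$-adic completion of $B$, $Q=A^D\cap P$) to the exact sequences of finitely generated $B$-modules
\[
0\to L_k(D)\to A\xrightarrow{\,D-k\,} A,\qquad 0\to E_k(D)\to A\xrightarrow{\,D^k\,} A,
\]
using that $A$ is finite over $B$ (by Mumford, as already invoked in the proof of Proposition~\ref{sec2-prop1}). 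Flatness of $\hat B$ over $B$ yields the desired exact sequences of $Q$-adic completions, and the kernels on the right become $L_k(\hat D)$ and $E_k(\hat D)$.

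\emph{Main obstacle.} The chief subtlety is that $D-k$ and $D^k$ are only $B$-linear, not $A$-linear, so one cannot flatly base change from $A$; this forces the detour through $B$, after which one must identify $A\otimes_B\hat B$ with $\hat A$. This in turn reduces to showing that the $P$-adic and $QA$-adic topologies on $A$ coincide. Since $A/B$ is purely inseparable, every $x\in P$ satisfies $x^p\in B\cap P=Q$, so fixing a finite generating set of $P$ and applying pigeonhole to monomials of high degree gives $P^N\subset QA$ for $N\gg 0$, which yields the equivalence of topologies and completes the identification.
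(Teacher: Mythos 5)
Your argument is correct, and its backbone coincides with the paper's: both proofs exploit that $\phi=D-k$ (resp.\ $\phi=D^k$) is only $B$-linear for $B=A^D$, place $L_k(D)$, $E_k(D)$ in the exact sequence $0\to M\to A\xrightarrow{\phi}A$ of finitely generated $B$-modules, and conclude by exactness of localization/flatness of completion over $B$, using that $A$ is a finite $B$-module. The differences are in presentation and in the level of detail. For part (1) the paper simply localizes the exact sequence at $Q$ and invokes exactness, which tacitly requires identifying $A\otimes_B B_Q$ with $A_P$ and the localized map with $D_P-k$ (the content of the $s\mapsto s^p$ trick of Proposition~\ref{sec2-prop2}); you instead carry out the element-level check directly, which amounts to the same characteristic-$p$ manipulation and makes the needed identification explicit. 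For part (2) the paper, as in Proposition~\ref{sec2-prop1}, asserts $\hat A=A\otimes_B\hat B$ without comment, whereas you justify it by showing the $P$-adic and $QA$-adic topologies agree (via $x^p\in Q$ for $x\in P$ and a pigeonhole bound $P^N\subset QA$), and you also verify continuity of $D$ from $D(P^n)\subset P^n$. So your route is essentially the paper's, but it fills in the two identifications the paper leaves implicit; the only standard facts you lean on without comment (Noetherianness of $B$ via Eakin--Nagata, so that $\hat B$ is $B$-flat and completion commutes with finite modules) are equally implicit in the paper's proof.
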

\begin{proof}
Consider the exact sequence
\[
0 \rightarrow M \rightarrow A \stackrel{\phi}{\rightarrow} A
\]
given by either $\phi(a)=Da-ka$, or $\phi(a)=D^ka$. In both cases $\phi$ is an $A^D$-module homomorphism. Since $A$ is Noetherian and finitely generated as a $B$-module, it follows that $M$ is a finitely generated $B$-module as well. Hence $\hat{M}=M\otimes_B \hat{B}$ and $\hat{A}=A\otimes_B \hat{B}$. 

If $\phi(a)=Da-ka$ then $M=L_k(D)$ and if $\phi(a)=D^ka$ then $M=E_k(D)$. The proposition is proved by taking completion (or localization) on the above exact sequence and considering that completion and localization are exact functors.

\end{proof}

\subsection{Singularities of the quotient.}
 The purpose of this section is to describe the singularities of  the quotient $Y$ of a scheme $X$ of finite type over a field $k$ by a nontrivial $\alpha_p$ or $\mu_p$ action induced by a nontrivial vector field $D$ such that either $D^p=0$ or $D^p=D$.

\begin{definition}\cite{Sch07}
\begin{enumerate}
\item The fixed locus of the action of $\alpha_p$ or $\mu_p$ (or of $D$) on $X$ is the closed subscheme of $X$ defined by the ideal sheaf generated by $D(\mathcal{O}_X)$.
\item A point $P \in X$ is called an isolated singularity of $D$ if there is an embeded component $Z$ of the fixed locus of $D$ such that $P \in Z$. The vector field $D$ is said to have only divisorial singularities if the ideal $D(\mathcal{O}_X)$ has no embedded components.
\end{enumerate}
\end{definition}

The next proposition gives some first information about the singularities of $Y$. 
\begin{proposition}\label{prop3}
Let $X$ be an integral scheme of finite type over an algebraically closed field of characteristic $p>0$. Suppose $X$ has an $\alpha_p$ or $\mu_p$ action induced by a vector field $D$ of either additive or multiplicative type. Let $\pi \colon X \rightarrow Y$ be the quotient. Then
\begin{enumerate}
\item If $X$ is normal then $Y$ is normal.
\item If $X$ is $S_2$ then $Y$ is $S_2$ as well.~\cite{Sch07}
\item If $X$ is smooth then the singularities of $Y$ are exactly the image of the embedded part of the fixed locus of the ~\cite{AA86}
\item If $X$ is normal and $\mathbb{Q}$-Gorenstein, then $Y$ is also $\mathbb{Q}$-Gorenstein. In particular, let $D$ be a divisor in $Y$ and $\tilde{D}$ be the divisorial part of $\pi^{-1}(D)$. Then if $n\tilde{D}$ is Cartier, $pnD$ is Cartier too.
\end{enumerate}
\end{proposition}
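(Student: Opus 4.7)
The entire proposition is local on $Y$, so by Proposition~\ref{sec2-prop2} I may work locally with $A=\mathcal{O}_{X,x}$, $B=A^D=\mathcal{O}_{Y,y}$ where $y=\pi(x)$. For (1), I first identify $K(B)=K(A)^D$. Proposition~\ref{general-structure-theory} gives $[K(A):K(B)]=p$, while $D$ extends to a derivation of $K(A)$ with $D^p\in\{0,D\}$, so $[K(A):K(A)^D]\le p$, and the chain $K(B)\subset K(A)^D\subset K(A)$ forces equality. Then any $c\in K(B)$ integral over $B$ is integral over $A$, hence in $A$ by normality; since $Dc=0$ we conclude $c\in A^D=B$.

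For (2), I pass to local cohomology through the finite homeomorphism $\pi$, which gives $H^i_{m_B}(M)=H^i_{m_A}(M)$ for every finite $A$-module $M$. In the $\mu_p$ case the eigenspace decomposition $A=\bigoplus_{k=0}^{p-1}L_k(D)$ from Definition~\ref{sec2-def1} exhibits $B=L_0(D)$ as a $B$-module direct summand of $A$, so $H^i_{m_B}(B)$ embeds into $H^i_{m_A}(A)$, which vanishes in degrees $<\min(2,\dim A)$ by the $S_2$ hypothesis on $X$. In the $\alpha_p$ case, $D\colon A\to A$ is $B$-linear with kernel $B$, yielding a $B$-linear injection $A/B\hookrightarrow A$; combined with $A$ being $S_1$ this gives $H^0_{m_B}(A/B)=0$, and the long exact sequence of $0\to B\to A\to A/B\to 0$ together with $H^0_{m_B}(A)=H^1_{m_B}(A)=0$ forces $H^1_{m_B}(B)=0$, so $B$ is $S_2$.

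For (3), I invoke Proposition~\ref{general-structure-theory}(3): $Y$ is smooth at $y$ iff the foliation $\mathcal{F}$, the saturation of $\mathcal{O}_X\cdot D$ inside $\mathcal{T}_X$, is a subbundle at $x$. In regular local coordinates I write $D=hD'$ with $h$ the greatest common factor of the coefficients of $D$; then $(h)$ is the divisorial part of the fixed locus and $\mathcal{F}=\mathcal{O}_X\cdot D'$. Since $\mathcal{T}_X$ is locally free and $D'$ has no divisorial zeros, $\mathcal{T}_X/\mathcal{F}$ is locally free precisely where $D'$ is nonzero, that is, off the embedded (non-divisorial) part of the fixed locus. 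Hence $Y$ is singular exactly along the image of that locus.

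For (4), the main obstacle is Cartier descent along the purely inseparable degree-$p$ cover $\pi$. I first prove the ``In particular'' clause. For any Weil divisor $E$ on $Y$, the divisorial pullback satisfies $\pi^{[*]}(nE)=ne\tilde{E}$ with $e\in\{1,p\}$ the ramification index of $\pi$ along $\tilde{E}$, so this is Cartier whenever $n\tilde{E}$ is. By the functoriality of Frobenius and diagram~\ref{sec0-diag-1}, the relative Frobenius $F_{Y/k}\colon Y\to Y^{(1)}$ factors as $Y\xrightarrow{\phi}X^{(1)}\xrightarrow{\pi^{(1)}}Y^{(1)}$. The Frobenius twist of $\pi^{[*]}(nE)$ is $(\pi^{(1)})^{[*]}(nE^{(1)})$, still Cartier on $X^{(1)}$, and pulling back by the morphism $\phi$ preserves Cartierness, so $F_{Y/k}^{[*]}(nE^{(1)})=pnE$ is Cartier on $Y$. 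The $\mathbb{Q}$-Gorenstein assertion then follows by applying this descent to $E=K_Y$, using Theorem~\ref{adjunction} together with the $\mathbb{Q}$-Gorensteinness of $X$ to guarantee that some multiple of $\tilde{K}_Y$ is Cartier on $X$.
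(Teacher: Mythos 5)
Most of your proposal is sound, and in places it does more than the paper does. Part (1) is the paper's own argument. For parts (2) and (3) the paper simply cites \cite{Sch07} and \cite{AA86}; your local-cohomology proof of (2) (the eigenspace decomposition $A=\oplus_k L_k(D)$ exhibiting $B$ as a $B$-module summand in the multiplicative case, and the $B$-linear injection $A/B\hookrightarrow A$ given by $D$ in the additive case, combined with $H^i_{m_B}(M)=H^i_{m_A}(M)$ for finite $A$-modules) and your proof of (3) via Proposition~\ref{general-structure-theory}(3) with $D=hD'$ are correct self-contained substitutes; the only small omission in (3) is the (easy, since $\mathcal{O}_{X,x}$ is a UFD) verification that $\mathcal{O}_X\cdot D'$ is already saturated, i.e.\ equals $\mathcal{F}$. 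For the descent clause of (4) your route is genuinely different from the paper's: the paper localizes and invokes Fossum's theorem that $\mathrm{Ker}(\mathrm{Cl}(B)\to\mathrm{Cl}(A))$ sits inside an $H^1$ of the group generated by $D$ and is therefore $p$-torsion, whereas you factor $F_{Y/k}$ as $Y\to X^{(1)}\to Y^{(1)}$ and use multiplicativity of divisorial pullback together with $F^{[*]}E^{(1)}=pE$. This is correct and avoids \cite{Fo73} entirely, with one caveat you should make explicit: for a non-prime divisor there is no single ramification index $e$, so either treat each prime component separately (which suffices) or note that the hypothesis ``$n\tilde{D}$ Cartier'' must be fed through componentwise; the paper's own phrasing has the same looseness.

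The genuine gap is your last sentence, the deduction that $Y$ is $\mathbb{Q}$-Gorenstein. Adjunction (Theorem~\ref{adjunction}, or \cite{R-S76} in the normal case) gives $\pi^{[\ast]}K_Y=K_X-(p-1)\Delta$, where $\Delta$ is the divisorial part of the fixed locus. Hence $\mathbb{Q}$-Gorensteinness of $X$ makes a multiple of $\pi^{[\ast]}K_Y$ (let alone of $\tilde{K}_Y$, which differs from $\pi^{[\ast]}K_Y$ by the ramification indices on each component) Cartier only if $\Delta$ is $\mathbb{Q}$-Cartier, and nothing in the hypotheses or in your argument provides that: the divisorial part of the fixed locus of a vector field on a normal variety need not be $\mathbb{Q}$-Cartier. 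So ``some multiple of $\tilde{K}_Y$ is Cartier on $X$'' is unjustified, and the descent statement cannot be applied to $E=K_Y$ as claimed. You are not missing an argument that the paper supplies --- the paper's proof of (4) in fact only establishes the descent clause via the class-group kernel and is silent on how the first sentence follows --- but as written your proposal asserts a derivation that does not close, and it would need either a proof that $\Delta$ is $\mathbb{Q}$-Cartier or a different mechanism (for instance, restricting to the case where the action is free in codimension one, where $\Delta=0$ and the argument does go through).
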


\begin{proof}
Normality of $Y$ is a local property so we may assume that $X$ and $Y$ are affine. Let $X=\mathrm{Spec}A$ and $Y=\mathrm{Spec}B$, where $B=\{a\in A, \; Da=0\} \subset A$. Let $\bar{B} \subset K(B)$ be the integral closure of $B$ in its function field $K(B)$. Let $z=b_1/b_2 \in \bar{B}$. Then since $K(B)\subset K(A)$ and $A$ is normal, $z\in A$. But $Dz=D(b_1/b_2)=(b_2Db_1-b_1Db_2)/b_2^2=0$. Therefore $z\in B$ and hence $B$ is integrally closed.

Suppose that $X$ is $\mathbb{Q}$-Gorenstein. Let $D$ be a divisor on $Y$. The property that $D$ is $\mathbb{Q}$-Cartier is local so we may assume that $X$ and $Y$ are affine, say $X=\mathrm{Spec}A$ and $Y=\mathrm{Spec}B$. Then $D$ is $\mathbb{Q}$-Cartier if and only if $nD=0$ in $\mathrm{Cl}(B)$, for some $n \in \mathbb{N}$. Consider the natural map $\phi\colon \mathrm{Cl} (B) \rightarrow \mathrm{Cl} (A)$. Then according to~\cite{Fo73}, 
\[
\mathrm{Ker}\phi \subset  H^1(G, A^{\ast}+K(A)t)
\]
where $G$ is the additive subgroup of $Der_k(A)$ generated by $D$, $A^{\ast}+K(A)t \subset K(A)[t]/(t^2)$ is the multiplicative subgroup and $G$ acts on it by the usual automorphisms induced by $D$. Then since $k$ has characteristic $p>0$, $G \cong \mathbb{Z}/p\mathbb{Z}$  and therefore $H^1(G, A^{\ast}+K(A)t)$ is $p$-torsion. Therefore $\mathrm{Ker}\phi$ is $p$-torsion as well. Hence if $n\tilde{D}$ is $\mathbb{Q}$-Cartier, then $nD\in \mathrm{Ker}\phi$ and hence $pnD=0$ in $\mathrm{Cl} (B)$. Therefore $pnD$ is Cartier as claimed. 
\end{proof}

Even if $X$ is smooth, it is very hard to give more detailed information about the singularities of $Y$, even more to classify them. The difficulty arises mainly from the complex structure of $\alpha_p$ actions and quotients. In this case the quotient may not even have rational singularities (such examples can be found in~\cite{Li08}). Quotients by $\mu_p$ are much easier to describe. In this case the quotient $Y$ has cyclic quotient singularities of type 
$\frac{1}{p}(1,m)$~\cite{Hi99}. The characteristic 2 case is also simpler (as is probably expected from the characteristic zero case of quotients by $\mathbb{Z}/2\mathbb{Z}$). 

The next proposition provides some more information about the singularities of the quotient. 

\begin{proposition}\label{prop4}
Let $X$ be a smooth surface with a nontrivial $\alpha_p$ or $\mu_p$ action induced by a global vector field $D$ or either additive or multiplicative type. Let $\pi \colon X \rightarrow Y$ be the quotient. Then there exists a commutative diagram 
\begin{gather}\label{prop4-diagram}
\xymatrix{
X^{\prime}\ar[r]^f \ar[d]^{\pi^{\prime}} & X \ar[d]^{\pi} \\
Y^{\prime} \ar[r]^g & Y \\
}
\end{gather}
such that 
\begin{enumerate}
\item $g \colon Y^{\prime} \rightarrow Y$ is the minimal resolution of $Y$, $X^{\prime}$ is normal and $f$ is birational.
\item The vector field $D$ on $X$ lifts to a global vector field $D^{\prime}$ on $X^{\prime}$ and $\pi^{\prime} \colon X^{\prime} \rightarrow Y^{\prime}$ is the quotient of $X^{\prime}$ by $D^{\prime}$.
\end{enumerate}
Suppose that  $p=2$. Then in addition to the above,
\begin{enumerate}\setcounter{enumi}{2}
\item $Y$ is Gorenstein. Moreover, if $Y$ has canonical singularities, then $Y$ has singularities of type either $A_1$, $D_{2n}$, $E_7$ or $E_8$.
\item If $D$ is of multiplicative type then $X^{\prime}$ is smooth and $f$ is obtained by blowing up the isolated singular points of $D$~\cite{Hi99}.
\item If $D$ is of additive type, then a diagram like (\ref{prop4-diagram}) exists where both $X^{\prime}$ and $Y^{\prime}$ are smooth. However, $Y^{\prime}$ is not necessarily the minimal resolution of $Y$\cite{Hi99}.
\end{enumerate}
\end{proposition}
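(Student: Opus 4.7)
The plan is to take $g \colon Y' \to Y$ to be the minimal resolution and to construct $X'$ as the normalization of $Y'$ in the function field $K(X)$, equivalently as the normalization of the fibre product $X\times_Y Y'$. With this definition $X'$ is normal and $\pi' \colon X' \to Y'$ is finite purely inseparable of degree $p$, while the projection $f \colon X' \to X$ is proper birational: it factors through $X\times_Y Y' \to X$, which is the base change of the birational morphism $g$ and hence birational, followed by a finite birational normalization step. This settles (1).

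For (2), I would lift the derivation $D$ directly. Writing $W = X\times_Y Y'$, $D$ extends to a derivation $\tilde D$ on $\mathcal{O}_W = \mathcal{O}_X \otimes_{\mathcal{O}_Y}\mathcal{O}_{Y'}$ via $\tilde D(a\otimes b) = (Da)\otimes b$, which is well defined precisely because $D$ kills $\mathcal{O}_Y$. Since any derivation of a domain extends uniquely to a derivation of its integral closure in the field of fractions, $\tilde D$ extends uniquely to a derivation $D'$ of $\mathcal{O}_{X'}$, and this uniqueness forces $(D')^p=0$ or $(D')^p=D'$ in the two cases respectively, so by Proposition~\ref{sec1-prop1} one obtains the desired action on $X'$. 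To see that $\pi'$ is the quotient by $D'$, factor it as $X' \to X'/D' \to Y'$; the tail $X'/D' \to Y'$ is finite (because $X' \to Y'$ is finite and $X' \to X'/D'$ is finite surjective), birational (since $K(X'/D') = K(X')^{D'} = K(X)^{D} = K(Y) = K(Y')$), and between normal schemes (normality of $X'/D'$ comes from Proposition~\ref{prop3}(1)); such a map must be an isomorphism, so $X'/D' = Y'$.

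For the $p=2$ statements, part (3) would follow from an adjunction formula analogous to Proposition~\ref{general-structure-theory}(4) but valid without smoothness of the quotient (as provided by the later Theorem~\ref{adjunction}). Since $X$ is a smooth surface, the rank one foliation $\mathcal{F}\subset\mathcal{T}_X$ generated by $D$ is a reflexive sheaf of rank $1$, hence invertible; for $p=2$ the adjunction specializes to $\omega_X \cong \pi^{[*]}\omega_Y \otimes \mathcal{F}$, so isolating $\pi^{[*]}\omega_Y$ and using that $\pi$ is finite flat yields that $\omega_Y$ is itself invertible, i.e.\ $Y$ is Gorenstein. The list $A_1, D_{2n}, E_7, E_8$ of admissible canonical types is then obtained by intersecting this with Artin's classification of rational double points in characteristic~$2$ and ruling out the remaining types via a local computation showing they admit no purely inseparable degree~$2$ cover by a smooth germ. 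For parts (4) and (5) I would simply invoke the explicit analysis of~\cite{Hi99}.

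The main obstacle, in my view, is the verification in (2) that the quotient of $X'$ by $D'$ is exactly $Y'$, and not some coarser normal scheme sitting between $Y'$ and $Y$; this is where the function-field identification combined with the normality of $X'/D'$ does the essential work. The next most delicate step is the Gorenstein claim in (3), since the adjunction formula stated in Proposition~\ref{general-structure-theory}(4) requires $Y$ smooth and one needs its extension to the singular quotient.
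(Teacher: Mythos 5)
Your construction in (1) and your reduction of ``$X'/D'=Y'$'' to a finite birational map onto a normal target are fine (the latter even fills in a step the paper's proof leaves as ``not hard to see''). But the step you pass over quickly in (2) is exactly the crux, and the principle you invoke for it is false: the statement that a derivation of a domain always extends to its integral closure is Seidenberg's theorem, valid in characteristic zero only; in characteristic $p>0$ it fails, and the paper itself flags precisely this failure (in the adjunction section, citing~\cite{Sei66}) as the reason one cannot simply lift the action to a normalization. So passing from $\tilde D$ on the fibre product $W$ to $D'$ on $X'=$ normalization of $W$ is unjustified. A symptom of the gap is that your argument never uses that $Y'\rightarrow Y$ is the \emph{minimal} resolution, whereas the paper's proof depends on it: writing $K_{Y'}=g^{\ast}K_Y-F$ with $F$ effective (minimality), and combining with the Rudakov--Shafarevich formula $K_X=\pi^{\ast}K_Y+(p-1)\Delta$ from~\cite{R-S76}, one computes that the divisor of the rational lift $D'$ is $(\pi')^{\ast}F+(p-1)f^{\ast}\Delta+E\geq 0$, hence $D'$ has no poles in codimension one and, $X'$ being normal, is a genuine regular vector field. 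Some argument of this kind (or another substitute for Seidenberg) is needed; as written, (2) is not proved.

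The Gorenstein claim in (3) is also not established by your route. First, $\pi$ is not flat once $Y$ is singular (miracle flatness needs a regular target), so the descent step has no flatness to lean on. More decisively, invertibility of $\pi^{[\ast]}\omega_Y$ gives no information: since $X$ is smooth, the reflexive pullback of \emph{every} rank-one reflexive sheaf on $Y$ is invertible (the map on local class groups is killed), so nothing about $\omega_Y$ can be ``isolated'' from $\omega_X\cong\pi^{[\ast]}\omega_Y\otimes\mathcal{F}^{p-1}$. Indeed your argument makes no essential use of $p=2$ and would prove that the quotient of a smooth surface by any $p$-closed vector field is Gorenstein for all $p$, contradicting the non-Gorenstein $\mu_p$-quotient singularities $\frac{1}{p}(1,m)$ mentioned just before the proposition. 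The genuinely characteristic-$2$ input in the paper is the factorization $X\rightarrow Y\rightarrow X^{(2)}$ through Frobenius: $Y\rightarrow X^{(2)}$ is a purely inseparable degree-$2$ cover of a \emph{smooth} surface, hence locally a hypersurface $t^2=c$ in a smooth threefold, so $Y$ has hypersurface, in particular Gorenstein, singularities; the restriction to $A_1$, $D_{2n}$, $E_7$, $E_8$ then follows from Proposition~\ref{prop3}(4) ($2$-torsion of the local class groups), rather than from the unexecuted ``local computation'' you propose. Your treatment of (4) and (5) by citing~\cite{Hi99} agrees with the paper.
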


\begin{proof}
The only parts of the proposition that need to be proved are the parts 1-3. Let $g \colon Y^{\prime} \rightarrow Y$ be the minimal resolution of $Y$. Let $\pi^{\prime} \colon X^{\prime}\rightarrow Y^{\prime}$ be the normalization of $Y^{\prime}$ in the field of fractions $K(X)$ of $X$. Then $\pi^{\prime}$ is a purely inseparable map of degree $p$. I will show that there exists a map $f \colon X^{\prime} \rightarrow X$ such that $\pi f =g\pi^{\prime}$ giving rise to the diagram~\ref{prop4-diagram}. The rational map $X\dasharrow Y^{\prime}$ defined by $\pi$ and $g$ is resolved after a sequence of blow ups of $X$. Therefore there exists a commutative diagram
 \[
\xymatrix{
 X^{\prime}\ar[dr]_{\pi^{\prime}} &   Z\ar[l]_{\psi}\ar[r]^{\phi} \ar[d]^{\delta} & X \ar[d]^{\pi} \\
       &Y^{\prime} \ar[r]^g & Y \\
}
\]
 where $\phi$ is a sequence of blow ups resolving $X\dasharrow Y^{\prime}$ and $\psi \colon Z \rightarrow X^{\prime}$ is the factorization of $\delta$ through $X^{\prime}$ which exists  since $Z$ is normal in $K(X)$. Since 
 $\pi^{\prime}$ and $\pi$ are finite morphisms, it follows that every $\psi$-exceptional curve is also a $\phi$-exceptional curve. Therefore the rational map $f=\phi\psi^{-1}$ is in fact a morphism and hence there exists a commutative diagram as claimed in~\ref{prop4}.1.

Next I will show that the vector field $D$ of $X$ lifts to a vector field $D^{\prime}$ of $X^{\prime}$. Since $X$ and $X^{\prime}$ are birational, $D$ gives a rational vector field $D^{\prime}$ of $X^{\prime}$. It is not hard to see that $Y^{\prime}=X^{\prime}/D^{\prime}$. Then in order to show that $D^{\prime}$ is regular it suffices to show that it has no poles. Let $\Delta$ be the divisorial part of $D$. Then~\cite{R-S76}
 \[
K_X=\pi^{\ast}K_Y+(p-1)\Delta.
\]
Moreover, since $Y^{\prime}$ is the minimal resolution of $Y$,
\[
K_{Y^{\prime}}=g^{\ast}K_Y-F,
\]
where $F$ is an effective $g$-exceptional $\mathbb{Q}$-divisor. Therefore from the commutative diagram~\ref{prop4-diagram} it follows that
\begin{gather*}
K_{X^{\prime}}=f^{\ast}K_X+E=f^{\ast}(\pi^{\ast}K_Y+(p-1)\Delta)+E=(\pi^{\prime})^{\ast}g^{\ast}K_Y+(p-1)f^{\ast}\Delta+E= \\
(\pi^{\prime})^{\ast}K_{Y^{\prime}}+(\pi^{\prime})^{\ast}F+(p-1)f^{\ast}\Delta+E,
\end{gather*}
where $E$ is an $f$-exceptional divisor. But the last adjunction formula shows that the divisor of $D^{\prime}$ is $(\pi^{\prime})^{\ast}F+(p-1)f^{\ast}\Delta+E$, which is effective. 
Hence $D^{\prime}$ has no poles and therefore it is regular. This concludes the proof of~\ref{prop4}.1 ~\ref{prop4}.2.

Suppose that $p=2$. Then $\pi$ factors through the geometric Frobenious $F \colon X \rightarrow X^{(2)}$. In fact there is a commutative diagram
\[
\xymatrix{
     & Y \ar[dr]^{\nu} \\
X \ar[ur]^{\pi}\ar[rr]^F & & X^{(2)}
}
\]
Since $X^{(2)}$ is smooth and $Y$ is normal, then $\nu$ is an $\alpha_L$-torsor over $X^{(2)}$, for some line bundle $L$ on $X^{(2)}$~\cite{Ek87}. Then since $X$ is smooth, $Y$ has hypersurface singularities and therefore it is Gorenstein. Suppose that $Y$ has canonical singularities. Then the dynking diagram of any singular point of $Y$ is of type either $A_n$, $D_n$, $E_6$, $E_7$ or $E_8$. By Proposition~\ref{prop3}.4, the local Picard groups of the singular points of $Y$ are 2-torsion. Therefore these can be only $A_1$, $D_{2n+1}$, $E_7$ or $E_8$. This shows~\ref{prop4}.3.

\end{proof}

\begin{remark}
\item Proposition~\ref{prop4}.5 essentially says that if $p=2$ then the isolated singularities of the vector field $D$ can be resolved by a sequence of blow ups. If $p>2$ then this is not possible in general. Take for example  $X=\mathbb{A}^2_k$, $p=5$ and $D=x \partial/\partial x+2y\partial/\partial y$. This is a vector field of multiplicative type. Suppose that a diagram like in Proposition~\ref{prop4} exists with both $X^{\prime}$ and $Y^{\prime}$ smooth. Then $f$ is obtained by successively blowing up the isolated singularities of $D$. Let $X_1\rightarrow X$ be the first blow up, i.e., the blow up of the singular point of $D$. Then a straightforward calculation shows that the lifting $D_1$ of $D$ on $X_1$ has exactly two isolated singular points, say $P$ and $Q$. Moreover, locally at $P$, $D_1=x   \partial/\partial x+y\partial/\partial y$ and locally at $Q$, $D_1=2(x   \partial/\partial x+2y\partial/\partial y)$. 
Hence at $Q$, $D_1$ has exactly the same form as $D$. Hence every time a singular point is blown up at which the vector field has the form $\lambda (x   \partial/\partial x+2y\partial/\partial y)$, $\lambda \in \mathbb{Z}_p$, another singular point will appear in the blow up where the lifted vector field will have the same form. Hence the process of blowing up the isolated singular points of the vector field does not lead to a vector field without isolated singular points and hence a diagram like in Proposition~\ref{prop4}  does not exist in this case.

However, even though there is no resolution in general  of the isolated singularities of $D$ by usual blow ups as in the case $p=2$, Proposition~\ref{prop4}.1,2 says that there exists a partial resolution by weighted blow ups instead, hence the singularities on $X^{\prime}$.
\end{remark}

\subsection{Relation between the divisors of $X$ and $Y$.}
Let $X$ be a scheme with a nontrivial $\alpha_p$ or $\mu_p$-action which by Proposition~\ref{sec1-prop1} is induced by a nontrivial vector field $D$ of $X$ such that either $D^p=0$ or $D^p=D$. Let $\pi \colon X \rightarrow Y$ be the quotient. 

The next propositions exhibit certain relations that exist between the divisors of $X$ and its quotient $Y$.

\begin{proposition}~\cite{R-S76}
Suppose that $X$ is a normal variety. Let $C$ be an irreducible divisor on $X$ and $C^{\prime}$ its scheme theoretic image in $Y$. If $C$ is an integral subvariety of $D$ then $\pi_{\ast}C=pC^{\prime}$ and $\pi^{\ast}C^{\prime}=C$. If on the other hand $C$ is not an integral subvariety of $D$ then $\pi_{\ast}C=C^{\prime}$ and $\pi^{\ast}C^{\prime}=pC$.
\end{proposition}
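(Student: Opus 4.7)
The plan is to reduce everything to a local calculation at the generic point $\eta_C$ of $C$. By Proposition~\ref{sec2-prop2}, $B:=A^D$ equals the localization at $\eta_{C'}$ of the coordinate ring of $Y$, where $A=\mathcal{O}_{X,\eta_C}$ is a DVR (since $X$ is normal and $C$ is codimension one) with uniformizer $t$. Moreover $B$ is itself a DVR, because $Y$ is normal by Proposition~\ref{prop3} and $\pi$ is finite. Writing $e$ for the ramification index and $f$ for the residue degree of $B\subset A$, we have $ef=[K(A):K(B)]=[K(X):K(Y)]=p$. Thus there are only two possibilities: $(e,f)=(1,p)$, which translates into $\pi^{\ast}C'=C$ and $\pi_{\ast}C=pC'$; or $(e,f)=(p,1)$, which gives $\pi^{\ast}C'=pC$ and $\pi_{\ast}C=C'$. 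The task is to match these cases with whether or not $C$ is an integral divisor of $D$.

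Since $X$ is normal, the saturation of $\mathcal{O}_X D$ inside $\mathcal{T}_X$ is a reflexive rank-one subsheaf, locally free at the codimension-one point $\eta_C$. Replacing $D$ by a local generator $D_0=f^{-1}D$ at $\eta_C$ (with $f\in K(X)^{\ast}$) does not alter $B$, since $A^{D_0}=A^D$. In these terms the hypothesis that $C$ is an integral divisor of $D$ becomes: $D_0(\mathfrak{m}_A)\subset\mathfrak{m}_A$ together with $D_0(A)\not\subset\mathfrak{m}_A$, i.e.\ $D_0$ descends to a nonzero derivation $\bar D_0$ of the residue field $K(C)=A/\mathfrak{m}_A$.

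In the integral case, $\bar D_0$ annihilates the image of $B$ in $A/\mathfrak{m}_A$, which is $K(C')=B/\mathfrak{m}_B$; hence $\bar D_0$ is a nontrivial $K(C')$-derivation of $K(C)$, forcing $K(C')\subsetneq K(C)$, so $f>1$ and therefore $f=p$, $e=1$, as required. In the non-integral case, $u:=D_0(t)\in A^{\times}$; note first that $t^p\in B$ because $D_0(t^p)=pt^{p-1}u=0$. I would then show that every element of $\mathfrak{m}_B$ has $v_A$-valuation at least $p$: assume $b=wt^k\in\mathfrak{m}_B$ with $w\in A^{\times}$ and $1\le k\le p-1$; the identity
\[
D_0(b)=t^{k-1}\bigl(kwu+tD_0(w)\bigr)
\]
combined with $D_0(b)=0$ forces $kwu\in\mathfrak{m}_A$, which contradicts the facts that $k$ is invertible in characteristic $p$ and that $w,u$ are units. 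Consequently $\mathfrak{m}_B\cdot A=(t^p)=\mathfrak{m}_A^p$, giving $e=p$, $f=1$.

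The main obstacle is the final valuation estimate in the non-integral case. It is short but is the only place where the characteristic-$p$ arithmetic (the invertibility of $k$ for $1\le k<p$) is genuinely used; the rest of the argument is essentially dimension counting once the integral/non-integral dichotomy is reformulated in terms of the induced derivation on $K(C)$.
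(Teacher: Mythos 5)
The paper does not prove this statement at all: it is quoted from Rudakov--Shafarevich \cite{R-S76}, so there is no internal proof to compare against. Your argument is correct and is essentially the classical one: localize at the generic point of $C$, note that $A=\mathcal{O}_{X,\eta_C}$ is the integral closure of the DVR $B=A^{D}$ in the degree-$p$ purely inseparable extension $K(X)/K(Y)$, so $ef=p$ with only the two possibilities $(e,f)=(1,p)$ or $(p,1)$, and then decide the case by whether the (saturated) local generator $D_0$ of the foliation induces a nonzero derivation on the residue field $K(C)$; your unit computation $D_0(wt^k)=t^{k-1}(kwu+tD_0(w))$ correctly forces $e=p$ in the non-integral case, and the Galois-descent-free observation that a nontrivial $K(C')$-derivation of $K(C)$ forces $f>1$ handles the integral case. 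One point you should make explicit, since your dichotomy silently relies on it: for a generator $D_0$ of the saturation of $\mathcal{O}_XD$ at the DVR $A$ one always has $D_0(A)\not\subset\mathfrak{m}_A$ (if $D_0(A)\subset(t)$ then $t^{-1}D_0$ is again a regular derivation and a rational multiple of $D$, hence lies in the saturated module $AD_0$, giving $1\in tA$, a contradiction). This is exactly what justifies reading ``$C$ integral'' as ``$D_0(\mathfrak{m}_A)\subset\mathfrak{m}_A$'' and ``$C$ non-integral'' as ``$D_0(t)\in A^{\times}$''; without it there would be a third, unhandled case $D_0(A)\subset\mathfrak{m}_A$ in which your unit argument does not start. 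With that one-line remark added, and the (standard, easy) check that any $D'$ admissible in the definition of integral divisor is an $A$-multiple of $D_0$ at $\eta_C$, the proof is complete.
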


\begin{proposition}
Let $L \in \mathrm{Pic}(X)$ be a line bundle on $X$. Then there exists $M \in \mathrm{Pic}(Y)$ such that $L^p=\pi^{\ast}M$.
\end{proposition}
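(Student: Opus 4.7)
The plan rests on the inclusion $\mathcal{O}_X^{\,p}\subset \mathcal{O}_Y$ inside $\mathcal{O}_X$. By Proposition~\ref{sec1-prop1} the $\alpha_p$ or $\mu_p$ action is induced by a derivation $D$ with $\mathcal{O}_Y=\ker D$, and the identity $D(a^p)=pa^{p-1}D(a)=0$ in characteristic~$p$ forces $a^p\in \mathcal{O}_Y$ for every $a\in \mathcal{O}_X$. Once this is in hand, the proposition is essentially formal.

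Given $L\in \mathrm{Pic}(X)$ with transition functions $\{f_{ij}\}$ on a trivializing cover $\{U_i\}$, I would descend $\{f_{ij}^{\,p}\}$ to a cocycle on $Y$. Since $\pi$ is finite and purely inseparable, it is a universal homeomorphism, so $\{V_i:=\pi(U_i)\}$ is an open cover of $Y$ with $\pi^{-1}(V_i)=U_i$. The elements $f_{ij}^{\,p}$ lie in $\mathcal{O}_Y(V_i\cap V_j)$ by the inclusion above, and they are in fact units there: if $u\in \mathcal{O}_Y$ and $uv=1$ in $\mathcal{O}_X$, then applying $D$ to $uv=1$ gives $uD(v)=0$, whence $D(v)=0$ and $v\in \mathcal{O}_Y$. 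The cocycle condition is preserved by raising to the $p$-th power, so $\{f_{ij}^{\,p}\}$ defines a line bundle $M\in \mathrm{Pic}(Y)$, and $\pi^{\ast}M=L^{p}$ by construction.

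Equivalently, one can phrase this via Frobenius factorization. The inclusion $\mathcal{O}_X^{\,p}\subset \mathcal{O}_Y$ factors the absolute Frobenius as $X\xrightarrow{\pi} Y\xrightarrow{\phi} X^{(1)}$, with composite equal to the relative Frobenius $F_{X/k}$, as in diagram~\eqref{sec0-diag-1}. Setting $M:=\phi^{\ast}L^{(1)}$, where $L^{(1)}$ is the Frobenius twist of $L$, and using the standard identity $F_{X/k}^{\ast}L^{(1)}\cong L^{p}$ (which is just the fact that the absolute Frobenius raises any line bundle to its $p$-th power), one obtains
\[
\pi^{\ast}M=(\phi\circ\pi)^{\ast}L^{(1)}=F_{X/k}^{\ast}L^{(1)}\cong L^{p}.
\]
I anticipate no serious obstacle; the entire argument is driven by the single observation $\mathcal{O}_X^{\,p}\subset \mathcal{O}_Y$, and the only mild check is that $D$-invariance of a unit forces $D$-invariance of its inverse, so that the cocycle $\{f_{ij}^{\,p}\}$ genuinely lives in $\mathcal{O}_Y^{\ast}$.
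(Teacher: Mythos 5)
Your first argument is exactly the paper's proof: raise the transition cocycle to the $p$-th power, observe that $f_{ij}^{\,p}\in\mathcal{O}_Y^{\ast}$ (the paper pulls back an affine cover of $Y$ rather than pushing forward a cover of $X$, but since $\pi$ is a homeomorphism these are the same move), and let these define $M$ with $\pi^{\ast}M=L^{p}$. The Frobenius-factorization reformulation via $\pi^{\ast}\phi^{\ast}L^{(1)}=F_{X/k}^{\ast}L^{(1)}\cong L^{p}$ is a correct and tidy repackaging of the same idea, so the proposal is correct and essentially coincides with the paper's argument.
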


\begin{proof}
$L$ corresponds naturally to an element of $H^1(X,\mathcal{O}_X^{\ast})$. Let $V_i$ be an affine cover of $Y$ and $U_i=\pi^{-1}(V_i)$ the corresponding affine cover of $X$. Then $L$ is determined by elements $a_{ij} \in \mathcal{O}_{U_{ij}}^{\ast}$, $U_{ij}=U_i \cap U_j$, which satisfy the cocycle condition $a_{ij}a_{ik}^{-1}a_{jk}=1$. But $a_{ij}^p \in \mathcal{O}_{V_{ij}}$ and satisfy the cocycle condition. Hence $a_{ij}^p$ determine a line bundle $M$ on $Y$ such that $\pi^{\ast}M=L^p$. 
\end{proof}

In exactly the same way it can be proved that.

\begin{proposition}
Let $L_i \in \mathrm{Pic}(Y)$, $i=1,2$, such that $\pi^{\ast}L_1 \cong \pi^{\ast}L_2$. Then $L_1^p \cong L_2^p$.
\end{proposition}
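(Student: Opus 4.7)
The plan is to reduce to showing that if $L\in\mathrm{Pic}(Y)$ satisfies $\pi^{\ast}L\cong\mathcal{O}_X$, then $L^p\cong\mathcal{O}_Y$; indeed, applying this to $L=L_1\otimes L_2^{-1}$ gives the statement. The strategy is the exact analogue of the proof of the previous proposition, using \v{C}ech cocycles on a common affine cover.

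First I would pick an affine cover $V_i$ of $Y$, let $U_i=\pi^{-1}(V_i)$, and represent $L$ by a cocycle $a_{ij}\in\mathcal{O}^{\ast}_{V_{ij}}$, where $V_{ij}=V_i\cap V_j$. The hypothesis $\pi^{\ast}L\cong\mathcal{O}_X$ means that the pulled-back cocycle is a coboundary: there exist $b_i\in\mathcal{O}^{\ast}_{U_i}$ with $a_{ij}=b_j/b_i$ in $\mathcal{O}^{\ast}_{U_{ij}}$ (after refining the cover if necessary, which we may do since $\pi$ is affine). Raising to the $p$-th power gives
\[
a_{ij}^{\,p}=\frac{b_j^{\,p}}{b_i^{\,p}}\quad\text{in }\mathcal{O}^{\ast}_{U_{ij}}.
\]

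The key step is to observe that each $b_i^{\,p}$ descends to $V_i$. By Proposition~\ref{sec1-prop1} the action on $X$ is induced by a global derivation $D$ with either $D^p=0$ or $D^p=D$, and $\mathcal{O}_Y=\mathcal{O}_X^{D}$, i.e.\ $\pi$ is locally given by $A^D\subset A$. For any $b\in A$ one has $D(b^p)=pb^{p-1}D(b)=0$ in characteristic $p$, hence $A^p\subset A^D$. Thus $b_i^{\,p}\in\mathcal{O}^{\ast}_{V_i}$ for each $i$, and the identity above now expresses $a_{ij}^{\,p}$ as a coboundary of units on $Y$. Therefore $L^p$ is trivial in $\mathrm{Pic}(Y)$, which proves the proposition.

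There is no real obstacle beyond keeping track of the cover: the content is the simple fact that in characteristic $p$ any $p$-th power is killed by any derivation, so $p$-th powers of local units on $X$ are local units on $Y$. The argument is parallel to the preceding proposition, with the roles of $X$ and $Y$ interchanged — one simply checks that the cocycle trivializing $L$ on $X$ has its $p$-th power living on $Y$.
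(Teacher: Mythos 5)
Your proof is correct and is essentially the argument the paper intends: the paper states this proposition is proved ``in exactly the same way'' as the preceding one, namely by a \v{C}ech cocycle computation using the fact that $p$-th powers are annihilated by the derivation $D$ and hence lie in $\mathcal{O}_Y=(\pi_{\ast}\mathcal{O}_X)^D$, which is exactly your key step. (Only a cosmetic remark: the harmless refinement of the cover is justified not because $\pi$ is affine but because $\pi$ is a homeomorphism, so any open cover of $X$ is pulled back from one of $Y$; alternatively no refinement is needed since $\check{H}^1$ of a cover injects into $H^1$.)
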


In the case when the action is free then there is a much closer relation between the picard schemes of $X$ and $Y$.

\begin{theorem}~\cite{Je78}
Suppose that $X$ is a proper variety with a free $\alpha_p$ or $\mu_p$-action. Then there exists an exact sequence of group schemes
\[
0 \rightarrow D(N) \rightarrow \underline{Pic}_k(Y) \rightarrow (\underline{Pic}(X)_k)^N \rightarrow 0,
\]
where $N=\alpha_p$ or $\mu_p$, $D(N)$ its Cartier dual and $(\underline{Pic}(X)_k)^N$ is the fixed scheme of $\underline{Pic}(X)_k$ under the natural action of $N$ on $\underline{Pic}(X)_k$.
\end{theorem}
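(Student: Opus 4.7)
The plan is to recognize $\pi\colon X\to Y$ as an fppf $N$-torsor and apply descent theory to identify line bundles on $Y$ with $N$-linearized line bundles on $X$, then extract the exact sequence by analyzing the forgetful functor to line bundles on $X$.

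First I would show that since the $N$-action is free (with $N = \alpha_p$ or $\mu_p$), the quotient map $\pi$ is a finite faithfully flat $N$-torsor in the fppf topology. For any $k$-scheme $T$, the base change $\pi_T\colon X_T \to Y_T$ is again an $N_T$-torsor, so by faithfully flat descent the pullback $\pi_T^{\ast}$ yields an equivalence between the category of line bundles on $Y_T$ and the category of $N_T$-linearized line bundles on $X_T$. In particular, taking isomorphism classes, $\mathrm{Pic}(Y_T) \cong \mathrm{Pic}^{N}(X_T)$, functorially in $T$.

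Next I would analyze the forgetful map $\mathrm{Pic}^N(X_T) \to \mathrm{Pic}(X_T)$, which forgets the linearization. Its image evidently lands in the set of $N$-fixed line bundles, i.e.\ those $L$ satisfying $\mu^{\ast}L \cong \mathrm{pr}_2^{\ast}L$ on $N_T \times X_T$; after sheafifying in $T$, this image lies in $(\underline{Pic}(X)_k)^N(T)$. Its kernel consists of linearizations of the trivial bundle $\mathcal{O}_{X_T}$, which are exactly group scheme homomorphisms $N_T \to \mathbb{G}_{m,T}$, i.e.\ $T$-points of the Cartier dual $D(N)$. Putting the pieces together gives a left-exact sequence of presheaves
\[
0 \to D(N)(T) \to \mathrm{Pic}(Y_T) \to (\underline{Pic}(X)_k)^N(T),
\]
which sheafifies to the desired left-exact sequence of group schemes.

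For surjectivity on the right, given a line bundle $L$ on $X_T$ whose class is $N$-fixed, the obstruction to promoting an abstract isomorphism $\mu^{\ast}L \cong \mathrm{pr}_2^{\ast}L$ to a genuine linearization (i.e.\ to a class in $H^2$ of $N$ with coefficients in $\mathbb{G}_m$) can be killed by passing to an fppf cover of $T$. Since $\underline{Pic}_k(Y)$ is by definition the fppf sheafification of the naive Picard presheaf, this implies surjectivity as fppf sheaves, hence as group schemes. The main obstacle I expect is precisely this last step: one must verify that the obstruction class genuinely vanishes fppf-locally on $T$, which in practice amounts to a low-degree term analysis of the Leray spectral sequence for $[X_T/N_T] \to BN_T$ with coefficients in $\mathbb{G}_m$, combined with the fact that for the affine commutative group schemes $N = \alpha_p, \mu_p$ the relevant Ext groups are controlled by the Cartier dual.
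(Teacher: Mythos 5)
First, a point of comparison: the paper does not prove this statement at all — it is quoted from Jensen \cite{Je78} — so there is no internal argument to measure your sketch against; it has to stand on its own. The framework you chose is the right one: freeness makes $\pi$ a finite flat $N$-torsor, descent along $\pi_T$ identifies line bundles on $Y_T$ with $N_T$-linearized line bundles on $X_T$, and the kernel of the forgetful map is indeed $D(N)(T)$. But even in that part you should make explicit where the hypothesis ``proper variety'' enters: a linearization of $\mathcal{O}_{X_T}$ is a unit on $N_T\times_T X_T$ satisfying the cocycle condition, and you need $H^0(X\times_k S,\mathcal{O})=H^0(S,\mathcal{O})$ (properness, $H^0(X,\mathcal{O}_X)=k$, flat base change over $k$) to see that this unit factors through $N_T$, hence is a homomorphism $N_T\to\mathbb{G}_{m,T}$, and the same fact is what kills the coboundaries so that the kernel is exactly $D(N)(T)$. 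A similar small argument is needed to pass from a $T$-point of the fixed scheme to an honest isomorphism $\mu^{\ast}L\cong \mathrm{pr}_2^{\ast}L$ over all of $N\times X_T$ after shrinking $T$ only, since equality of classes in the sheafified Picard functor gives less than that a priori.

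The genuine gap is the surjectivity, which is the substantive content of the theorem. Your statement that the obstruction ``can be killed by passing to an fppf cover of $T$'' is precisely what has to be proved, and it is not formal: the obstruction is a Hochschild $2$-cocycle of $N_T$ with values in $\mathbb{G}_m$, so a priori the sequence only continues into an $H^2_0(N,\mathbb{G}_m)$-term, and right-exactness is exactly the assertion that this term does not obstruct, fppf-locally on the base. Establishing that requires using the specific structure of $N=\alpha_p,\mu_p$ — controlling the symmetric and non-symmetric $2$-cocycles, equivalently the relevant extensions of $N$ by $\mathbb{G}_m$ via the Cartier dual — together with the properness of $X$ to reduce coefficients to units of the base; this computation is the heart of \cite{Je78} and you acknowledge but do not supply it. As it stands, the proposal correctly sets up the left-exact part and then reduces the theorem to its hardest step rather than proving it.
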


\section{$\alpha_L$-Torsors.}

In this section the notion of $\alpha_L$-torsors is defined. This is a specific class of purely inseparable morphisms of degree $p$ and as it will be shown in the next paragraphs it is very closely related to quotients by $\alpha_p$ or $\mu_p$-actions.

\begin{definition}
Let $G \rightarrow Y$ be a group scheme over a scheme $Y$ acting on a $Y$-scheme $X$. Then $X$ is called a torsor for $G$ over $Y$ if and only if $X$ is faithfully flat over $Y$ and the natural map $G \times_Y X \rightarrow X \times_Y X$ is an isomorphism.   

Equivalently, $X$ is called a torsor for $G$ over $Y$ if an only if for any $Y$ scheme $S$, $\mathrm{Hom}_Y(S,X)$ is a principal homogeneous space over $\mathrm{Hom}_Y(S,G)$.
\end{definition}

Let $X$ be a scheme defined over a field $k$ of characteristic $p>0$ and $L$ a line bundle over $X$. Next I will define the infinitesimal group scheme $\alpha_L$. 

Both $L$ and $L^p$ have a natural structure of group schemes over $X$ and the relative Frobenious map induces a group scheme map $F \colon L \rightarrow L^p$. Over any affine open subset $U$ of $X$ where $L$ trivializes, $F$ is given by $F \colon \mathcal{O}_U[t] \rightarrow \mathcal{O}_U[t]$, where $F(f(t))=f(t^p)$, for any $f \in \mathcal{O}_U[t]$. Moreover, $F$ is surjective in the flat topology over $X$.

\begin{definition}\cite{Ek86}
The group scheme $\alpha_L$ is defined to be the kernel of the relative Frobenious map $F \colon L \rightarrow L^p$. Hence there is an exact sequence of group schemes in the flat topology
\[
0 \rightarrow \alpha_L \rightarrow L \stackrel{F}{\rightarrow} L^p \rightarrow 0.
\]
\end{definition}
Equivalently $\alpha_L$ may be defined as follows. Let $U_i$ be an affine open cover of $X$. Then $L$ is determined by elements $a_{ij} \in \mathcal{O}_{U_{ij}}^{\ast}$ which satisfy the cocycle condition. Then $\alpha_L$ is obtained by glueing the schemes $U_i \times \mathrm{Spec} \left( k[t]/(t^p) \right)$ over $U_{ij}$ via the isomorphisms 
\[
\phi_{ij} \colon U_{ij} \times  \mathrm{Spec} \left( k[t]/(t^p) \right) \rightarrow U_{ji} \times  \mathrm{Spec} \left( k[t]/(t^p) \right)
\]
given at the sheaf of rings level by $\phi_{ij} \colon \mathcal{O}_{U_{ji}}[t]/(t^p) \rightarrow \mathcal{O}_{U_{ij}}[t]/(t^p)$, where $\phi_{ij}(t)=a_{ij}t$. The group scheme structure of $\alpha_L$ is inherited from the one on $L$ and is given locally by the map
\[
\mu_i \colon \mathcal{O}_{U_{i}}[t]/(t^p) \rightarrow \mathcal{O}_{U_{i}}[t]/(t^p) \otimes \mathcal{O}_{U_{i}}[t]/(t^p),
\]
defined by $\mu_i(t)=t \otimes 1 + 1\otimes t$.

From the general theory of torsors~\cite{Mi80}, it follows that $\alpha_L$-torsors over $X$ are classified by $H^1_{\mathrm{fl}}(X, \alpha_L)$. The next theorem describes explicitly the structure of $\alpha_L$-torsors.

\begin{theorem}\label{sec3-th1}
Let $L$ be a line bundle over a scheme $Y$ defined over an algebraically closed field $k$ of characteristic $p>0$.  Let $\pi \colon X \rightarrow Y$ be an $\alpha_L$-torsor over $Y$. Then,
\begin{enumerate}
\item $X$ is completely determined by an $F$-split extension
\begin{gather}\label{sec3-eq1}
0 \rightarrow \mathcal{O}_Y \stackrel{i}{\rightarrow} E \rightarrow L^{-1} \rightarrow 0,
\end{gather}
where $E$ is a rank two vector bundle on $Y$. (An exact sequence is called $F$-split if its pullback by the absolute Frobenious $F$ is split). Moreover, there is a diagram
\[
\xymatrix{
X \ar[r]^{\sigma} \ar[dr]^{\pi} & \mathrm{S(E)} \ar[d] \\
                                &   Y
}
\]
where $\sigma$ is a closed immersion and $N_{X/S(E)}=\pi^{\ast}(\mathcal{O}_Y \oplus L^p)$.
\item Let $U_i$ be an open affine cover of $Y$. Then $\pi \colon X \rightarrow Y$ is obtained by glueing the maps
\[
\pi_i \colon V_i= \mathrm{Spec} \frac{\mathcal{O}_{U_i}[t]}{(t^p-c_i)} \rightarrow U_i,
\]
by the isomorphisms $\psi_{ij} \colon V_{ij} \rightarrow V_{ji}$ defined by sheaf of $\mathcal{O}_{U_{ij}}$-algebras isomorphisms
\[
\phi_{ij} \colon \frac{\mathcal{O}_{U_{ji}}[t]}{(t^p-c_j)} \rightarrow \frac{\mathcal{O}_{U_{ij}}[t]}{(t^p-c_i)}
\]
given by $\phi_{ij}(t)=\gamma_{ij}+a_{ij}t$, where the $a_{ij}\in \mathcal{O}^{\ast}_{U_{ij}}$, $\gamma_{ij} \in  \mathcal{O}_{U_{ij}}$, $c_i \in \mathcal{O}_{U_i}$ and moreover. The elements $a_{ij} \in \mathcal{O}_{U_{ij}}^{\ast}$ satisfy the cocycle condition $a_{ij}a_{jk}=a_{ik}$ and define the line bundle $L^{-1}$. The matrices 
$ A_{ij}= \begin{pmatrix} 1 & \gamma_{ij} \\
0 & a_{ij}
\end{pmatrix} $ satisfy the cocycle condition $A_{ij}A_{jk}=A_{ik}$ and define $E$ in~\ref{sec3-eq1} and $c_j=\gamma_{ij}^p+a_{ij}^pc_i$.
\item Suppose that $Y$ is a projective, Cohen-Macauley scheme and Gorenstein in codimension one. Then $X$ is also Cohen-Macauley, Gorenstein in codimension one and moreover
\[
\omega_X=\pi^{\ast}(\omega_Y \otimes L^{p-1})^{[1]}.
\]
\item If the exact sequence~\ref{sec3-eq1} splits then $X$ admits a nontrivial $\mu_p$-action and $\pi \colon X \rightarrow Y$ is the quotient of $X$ by the $\mu_p$-action. If $H^0(L) \not= 0$, then $X$ admits a nontrivial $\alpha_p$-action and $\pi \colon X \rightarrow Y$ is the quotient of $X$ by the $\alpha_p$-action.
\end{enumerate}
\end{theorem}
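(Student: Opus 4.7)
\emph{Plan.} The strategy is to analyze $\pi$ locally over a cover of $Y$ trivializing $L$, exploiting that $\alpha_L$ becomes $\alpha_p$ locally, and then to extract the global structure---both the $F$-split extension and the applications in (3) and (4)---from the resulting gluing data.

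For parts (1) and (2), I would choose an affine cover $\{U_i=\mathrm{Spec}(R_i)\}$ of $Y$ on which $L$ trivializes. Using $H^1_{\mathrm{fl}}(U_i,\mathbb{G}_a)=0$, the exact sequence $0\to\alpha_p\to\mathbb{G}_a\xrightarrow{F}\mathbb{G}_a\to 0$ identifies $\alpha_p$-torsors over $U_i$ with $R_i/R_i^p$, so $\pi^{-1}(U_i)=\mathrm{Spec}(R_i[t_i]/(t_i^p-c_i))$ for some $c_i\in R_i$. On overlaps $U_{ij}$, $\alpha_L$-equivariance forces the transition to be affine in $t$, say $t_j\mapsto\gamma_{ij}+a_{ij}t_i$ with $a_{ij}$ the cocycle of $L^{-1}$. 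Compatibility with the defining equations forces $c_j=\gamma_{ij}^p+a_{ij}^pc_i$, and the cocycle condition $\phi_{ij}\phi_{jk}=\phi_{ik}$ translates to the matrix cocycle $A_{ij}A_{jk}=A_{ik}$ for $A_{ij}$ as displayed in the statement. These matrices glue to a rank two bundle $E$ fitting in the extension $0\to\mathcal{O}_Y\to E\to L^{-1}\to 0$, and the local sections $s_i=-c_ie_1+e_2$ of $F^*E$---whose transition relation $s_j=a_{ij}^ps_i$ is exactly the Frobenius of the relation for $c_j$---patch to a global splitting of $F^*E\to F^*L^{-1}$, witnessing the $F$-splitting. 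The embedding $\sigma\colon X\hookrightarrow S(E)$ is constructed locally by identifying $t_i$ with the coordinate on $S(E)$ dual to $e_2$ and the constant $1$ with the coordinate dual to $e_1$; the defining ideal is generated by these two equations and the normal quotient yields $N_{X/S(E)}\cong\pi^*(\mathcal{O}_Y\oplus L^p)$. The converse construction starting from $(E,\{c_i\})$ reconstructs $X$ by reversing these formulas.

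Part (3) follows from this local description: $\pi$ is finite flat and the local models $\mathrm{Spec}(R_i[t]/(t^p-c_i))$ are hypersurfaces, so Cohen--Macaulayness and Gorensteinness in codimension one transfer from $Y$. For the canonical sheaf formula I would use $\omega_{X/Y}\cong\mathcal{H}om_{\mathcal{O}_Y}(\pi_*\mathcal{O}_X,\mathcal{O}_Y)$ for the finite flat Gorenstein map $\pi$, and compute the right-hand side via the filtration of $\pi_*\mathcal{O}_X$ by $t$-degree, whose graded pieces are $\mathcal{O}_Y,L^{-1},\ldots,L^{-(p-1)}$: the trace-like pairing sending $(t^a,t^b)$ to the coefficient of $t^{p-1}$ in the product reduced modulo $t^p-c$ identifies the dual with $\pi_*(\pi^*L^{p-1})$, so $\omega_{X/Y}\cong\pi^*L^{p-1}$ on the Gorenstein locus, and the reflexive hull gives the stated formula for $\omega_X$.

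For (4), when the extension splits one can arrange $\gamma_{ij}=0$; the transitions reduce to $t_j=a_{ij}t_i$ and the local derivation $D_i=t_i\partial/\partial t_i$ is invariant (since $\partial/\partial t_i$ transforms by $a_{ij}^{-1}$), so it glues to a global $D$ satisfying $D(t^k)=kt^k$ and hence $D^p=D$; this gives a $\mu_p$-action by Proposition~\ref{sec1-prop1}, and a direct computation on the local model shows $\mathcal{O}_X^D=\pi^{-1}\mathcal{O}_Y$, so the quotient is $Y$. For the second assertion, given a nonzero $s\in H^0(Y,L)$ the local derivations $D_i=s_i\partial/\partial t_i$ glue (the transition factors for $s$ and $\partial/\partial t$ cancel), and since $D(s)=0$ one obtains $D^p=s^p(\partial/\partial t)^p=0$ because $(\partial/\partial t)^p$ annihilates the basis $1,t,\ldots,t^{p-1}$, producing an $\alpha_p$-action with the same invariant ring. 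The main obstacle I foresee is the reconstruction step, which requires coordinating changes of local torsor trivializations with modifications of the local splittings $s_i$ by $p$-th powers in order to show that isomorphic $\alpha_L$-torsors correspond to isomorphic $F$-split extensions; once this functoriality is pinned down, the remainder is a careful cocycle computation together with the trace-pairing argument for the canonical sheaf.
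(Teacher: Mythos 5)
Your proposal is correct in substance, and for parts (2)--(4) it follows the paper's own arguments almost verbatim: the normal bundle computation from the ideal $(s-1,\,t^p-c_i)$, the use of duality for the finite flat map to write $\omega_X=\pi^{\ast}\omega_Y\otimes\pi^{!}\mathcal{O}_Y$ (your trace-type pairing, i.e.\ the ``coefficient of $t^{p-1}$'' functional, is the same generator the paper exhibits as $d^{p-1}/dt^{p-1}$, transforming by $a_{ij}^{p-1}$), and the local derivations $t\,d/dt$ and $s_i\,d/dt$ in part (4). The one genuine difference is the direction of the classification in parts (1)--(2). The paper works globally: it takes flat cohomology of $0\to\alpha_L\to L\to L^p\to 0$, identifies $H^1(Y,L)$ with $\mathrm{Ext}^1_Y(L^{-1},\mathcal{O}_Y)$, reads off that classes in $H^1_{\mathrm{fl}}(Y,\alpha_L)$ are $F$-split extensions, and only then unwinds the boundary maps to get the local model $t^p=c_i$ and the cocycle $A_{ij}$. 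You run this backwards: affine-local triviality of the torsor via $H^1_{\mathrm{fl}}(U_i,\mathbb{G}_a)=0$, an equivariance argument forcing the transitions to be affine $t_j\mapsto\gamma_{ij}+a_{ij}t_i$ with $c_j=\gamma_{ij}^p+a_{ij}^pc_i$, and then a hand-made assembly of $E$ and of the Frobenius splitting from the sections $s_i=-c_ie_1+e_2$ (which indeed glue precisely because of the relation on the $c$'s). This is more elementary and makes the explicit description (2) primary, but it buys you an obligation the paper discharges for free through the cohomological identification: showing that the assignment torsor $\mapsto$ ($F$-split extension) is well defined and injective on isomorphism classes, i.e.\ that changing the local trivializations of $L$ and of the torsor (which alters $c_i$ by $p$-th powers and the $\gamma_{ij}$ by coboundaries) changes $(E,\text{splitting})$ only by an isomorphism. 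You correctly flag exactly this bookkeeping as the remaining step; with it carried out, your route is a complete and legitimate alternative to the paper's, and the rest of your outline coincides with the published proof.
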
 

\begin{proof}
The proof of the first part of~\ref{sec3-th1}.1 and~\ref{sec3-th1}.3 can be found in~\cite{Ek86}. For the convenience of the reader and in order to show the remaing parts of the theorem, I will describe the correspondence between $\alpha_L$-torsors and $F$-split exact sequences as in~\ref{sec3-th1}.1.

By~\cite{Mi80}, $\alpha_L$-torsors are classified by $H^1_{\mathrm{fl}}(X, \alpha_L)$. Taking flat cohomology in the exact sequence
\[
0 \rightarrow \alpha_L \rightarrow L \stackrel{F}{\rightarrow} L^p \rightarrow 0.
\]
we get the exact sequence in flat cohomology
\begin{gather}\label{sec3-eq3}
\cdots \rightarrow H^0(Y,L) \rightarrow H_{fl}^1(Y,\alpha_L) \rightarrow H^1(Y,L) \stackrel{F^{\ast}}{\rightarrow} H^1(Y,L)
\end{gather}
where $F^{\ast}$ is the induced map of the Frobenious on cohomology. Now taking into consideration that $H^1(Y,L)=\mathrm{Ext}^1_Y(L^{-1},\mathcal{O}_Y)$ and that the last group classifies extensions of $L^{-1}$ by $\mathcal{O}_Y$, it follows immediately that to give an element of $H^1(Y,\alpha_L)$ is equivalent to give an $F$-split extension 
\begin{gather*}
0 \rightarrow \mathcal{O}_Y \stackrel{i}{\rightarrow} E \rightarrow L^{-1} \rightarrow 0,
\end{gather*}
of $L^{-1}$ by $\mathcal{O}_Y$. Now if one writes down explicitly the correspondence between $\alpha_L$-torsors and elements of $H^1_{\mathrm{fl}}(X, \alpha_L)$ as well as the maps that appear in the exact sequence~\ref{sec3-eq3} it follows that 
$X=\mathrm{Spec}_YR(E,\sigma)$, where $R(E,\sigma)=S(E)/I$,  $I=J+\mathrm{Ker}\Phi$, $J$ is the ideal of $S(E)$ generated by $a-i(a)$, for all $a \in \mathcal{O}_Y$, and $\Phi \colon S(E) \rightarrow \mathcal{O}_Y$ is the map induced on $S(E)$ by $\sigma$, where $\sigma \colon E \rightarrow \mathcal{O}_Y$ is the $F$-splitting. In particular, locally $\Phi$ is defined by setting $\Phi(a)=a^p$, for all $a \in \mathcal{O}_Y=S^0(E)$, and on $S^mE$ by $\Phi(x_1x_1\cdots x_m)=\sigma(x_1)\sigma(x_2)\cdots \sigma(x_m)$. This construction exhibits $X$ as a closed subscheme of $S(E)$, viewed as a vector bundle over $Y$.

The second part of the theorem is obtained by writing down explicitly the above construction in local coordinates.

Next I will show the second part of~\ref{sec3-th1}.1, i.e., that $N_{X/S(E)}=\pi^{\ast}(\mathcal{O}_Y \oplus L^p)$. Let $U_i$ be an affine cover of $Y$. Then $E$ is determined by matrices $ A_{ij}= \begin{pmatrix} 1 & \gamma_{ij} \\
0 & a_{ij}
\end{pmatrix} $ satisfying the usual cocycle condition $A_{ij}A_{jk}=A_{ik}$, where $a_{ij}\in \mathcal{O}_{U_{ij}}^{\ast}$ determine $L^{-1}$ and $\gamma_{ij}\in \mathcal{O}_{U_{ij}}$. Then $S(E)$, viewed as a vector bundle over $Y$,  is obtained by glueing $U_i \times \mathrm{A}_k^2$ along the isomorphisms $\Psi_{ij} \colon U_{ij} \times \mathrm{A}_k^2 \rightarrow U_{ji} \times \mathrm{A}_k^2$ defined by the maps 
\[
\psi_{ij}\colon \mathcal{O}_{U_{ji}}[s,t] \rightarrow \mathcal{O}_{U_{ij}}[s,t]
\]
given by $\psi_{ij}(s)=s$, $\psi_{ij}(t)=\gamma_{ij}s+a_{ij}t$. From the local construction explained in the second part of the theorem it follows that $\pi\colon X \rightarrow Y$ is obtained by glueing over $U_i$ the closed subschemes 
\[
V_i=\mathrm{Spec} \frac{\mathcal{O}_{U_i}[s,t]}{(s-1,t^p-c_i)} 
\]
of $U_i \times \mathbb{A}^2_k $ along the isomorphisms $\Phi_{ij} \colon V_{ij}\rightarrow V_{ji}$ induced from $\Psi_{ij}$. Let $I_i$ be the ideal sheaf of $V_i$ in $U_i \times \mathbb{A}^2_k$. Then $I_i=(s-1,t^p-c_i)$. Then there exists a commutative diagram
\[
\xymatrix{
I_{ji}/I^2_{ji} \ar[r]^{G_{ji}} \ar[d]_{\phi_{ij}} &  \mathcal{O}_{V_{ji}} \oplus \mathcal{O}_{V_{ji}} \ar[d]^{\lambda_{ji}} \\
I_{ij}/I^2_{ij} \ar[r]^{G_{ij}}                    &  \mathcal{O}_{V_{ij}} \oplus \mathcal{O}_{V_{ij}} 
}
\]
such that the maps $\phi_{ij}$ are induced from $\psi_{ij}$ and all the other maps are isomorphisms defined by $G_{ij}(s-1)=(1,0)$, $G_{ij}(t^p-c_j)=(0,1)$ and $\lambda_{ij}(1,0)=(1,0)$, $\lambda_{ij}(0,1)=(0,a_{ij}^p)$. The maps $\lambda_{ij}$ correspond to the matrix $ \begin{pmatrix} 1 & 0 \\
0 & a_{ij}^p
\end{pmatrix} $. Therefore the sheaves $\mathcal{O}_{V_i} \oplus \mathcal{O}_{V_i}$ glue by $\lambda_{ij}$ to $\pi^{\ast}\left( \mathcal{O}_Y \oplus L^{-p}\right) $ and therefore
\[
N_{X/S(E)}=\pi^{\ast}\left(\mathcal{O}_Y \oplus L^p\right),
\]
as claimed. 

Next I will show the adjunction formula in the third part of the theorem. Since $Y$ is Gorenstein in codimension one then by the explicit description of $X$ given in the second part of the theorem it follows that $X$ is Gorenstein in codimension one as well. Hence in order to prove the adjunction formula it suffices to assume that $X$ and $Y$ are both Gorenstein. 

\textbf{Claim:} \[
\omega_X=\pi^{\ast}\omega_Y \otimes \pi^{!}\mathcal{O}_{Y}.
\]
Indeed. Since $\pi$ is finite then $\omega_X=\pi^{!}\omega_Y$~\cite[Chapter III, ex. 7.2]{Ha77}. Then by using duality for finite flat morphisms it follows that
\begin{gather*}
\mathcal{H}om_X(\pi^{\ast}\omega_Y,\omega_X)=\mathcal{H}om_X(\pi^{\ast}\omega_Y,\pi^{!}\omega_Y) =\mathcal{H}om_Y(\pi_{\ast}\pi^{\ast}\omega_Y,\omega_Y) =\\
\mathcal{H}om_Y(\omega_Y\otimes \pi_{\ast}\mathcal{O}_X,\omega_Y)=\mathcal{H}om_Y(\pi_{\ast}\mathcal{O}_X,\mathcal{O}_Y)=\pi^{!}\mathcal{O}_Y
\end{gather*}
and therefore $\omega_X=\pi^{\ast}\omega_Y \otimes \pi^{!}\mathcal{O}_{Y}$, as claimed. Hence in order to prove the adjunction formula in~\ref{sec3-th1}.3 it suffices to show that $\pi^{!}\mathcal{O}_Y=\pi^{\ast}(L^{p-1})$. From the claim and since $X$ and $Y$ are Gorenstein, $\pi^{!}\mathcal{O}_Y$ is an invertible sheaf on $X$. In order to show that it is in fact equal to $\pi^{\ast}(L^{p-1})$ I will describe the local glueing data for $\pi^{!}\mathcal{O}_Y$. Let $U_i$ be an affine open cover of $Y$. Then $V_i=\pi^{-1}U_i$ is an affine open cover of $X$. Moreover,
\[
\mathcal{O}_{V_i}=\frac{\mathcal{O}_{U_i}[t]}{(t^p-c_i)}
\]
where $c_i\in \mathcal{O}_{U_i}$ and the glueing data are as in~\ref{sec3-th1}.2. Then
\[
\pi^{!}\mathcal{O}_Y|_{V_i}=\mathrm{Hom}_{U_i}(\mathcal{O}_{V_i},\mathcal{O}_{U_i})=\mathrm{Hom}_{U_i}(\frac{\mathcal{O}_{U_i}[t]}{(t^p-c_i)},\mathcal{O}_{U_i}).
\]
Then I claim that $\phi_i=d^{p-1}/dt^{p-1}$ is a generator of $\pi^{!}\mathcal{O}_Y|_{V_i}$ as an $\mathcal{O}_{U_i}$-module. Indeed. Let $g_i$ be a generator. Then $d^{p-1}/dt^{p-1}=ag_i$, for some $a \in \mathcal{O}_{V_i}$. Then
\[
(p-1)!=\frac{d^{p-1}}{dt^{p-1}}(t^{p-1})=ag_i(t^{p-1}).\]
But $(p-1)!=-1 \mathrm{mod} p$ and hence $a \in \mathcal{O}_{V_i}^{\ast}$. Hence $\phi_i=d^{p-1}/dt^{p-1}$ is a generator of $\pi^{!}\mathcal{O}_Y|_{V_i}$, as claimed. Now the $V_i$ glue in order to form $X$ by the isomorphisms  $\psi_{ij} \colon V_{ij} \rightarrow V_{ji}$ defined by sheaf of $\mathcal{O}_{U_{ij}}$-algebras isomorphisms
\[
\phi_{ij} \colon \mathcal{O}_{V_{ji}}=\frac{\mathcal{O}_{U_{ji}}[t]}{(t^p-c_j)} \rightarrow \frac{\mathcal{O}_{U_{ij}}[t]}{(t^p-c_i)}=\mathcal{O}_{V_{ij}}
\]
given by $\phi_{ij}(t)=\gamma_{ij}+a_{ij}t$, where $a_{ij}$ and $\gamma_{ij}$ are as in~\ref{sec3-th1}.2. Then $\mathrm{Hom}_{U_i}(\mathcal{O}_{V_i},\mathcal{O}_{U_i})$ glue to form $\pi^{!}\mathcal{O}_Y$ by the isomorphisms
\[
\delta_{ij} \colon \mathrm{Hom}_{U_{ij}}(\mathcal{O}_{V_{ij}},\mathcal{O}_{U_{ij}}) \rightarrow \mathrm{Hom}_{U_{ji}}(\mathcal{O}_{V_{ji}},\mathcal{O}_{U_{ji}})
\]
given by $\delta_{ij}(\sigma)=\sigma \circ \phi_{ij}$, for any $\sigma \in \mathrm{Hom}_{U_{ij}}(\mathcal{O}_{V_{ij}},\mathcal{O}_{U_{ij}})$. In particular,
\[
\delta_{ij}(\frac{d^{p-1}}{dt^{p-1}})=\frac{d^{p-1}}{dt^{p-1}}\circ \phi_{ij}\]
But for any $f(t) \in \mathcal{O}_{U_{ji}}[t]$,
\begin{gather*}
\frac{d^{p-1}}{dt^{p-1}}\circ \phi_{ij}(f(t))=\frac{d^{p-1}}{dt^{p-1}}(f(\gamma_{ij}+a_{ij}t))=a_{ij}^{p-1}\frac{d^{p-1}f}{dt^{p-1}}(\gamma_{ij}+a_{ij}t)=\\
a_{ij}^{p-1}\frac{d^{p-1}f}{dt^{p-1}}=a_{ji}^{1-p}\frac{d^{p-1}f}{dt^{p-1}}.
\end{gather*}
Hence the glueing data for $\pi^{!}\mathcal{O}_Y$ are given by $a_{ij}^{1-p}$ and therefore $\pi^{!}\mathcal{O}_Y=\pi^{\ast}(L^{p-1})$, as claimed.

Next I will show the last part of the theorem.

Suppose that the exact sequence~\ref{sec3-eq1} is split. Then $E=\mathcal{O}_Y \oplus L^{-1}$. Then in the notation of~\ref{sec3-th1}.2, $\gamma_{ij}=0$. Let $D_i$ be the $\mathcal{O}_{U_i}$-derivation of $\mathcal{O}_{V_i}$ given by $D_i=t\frac{d}{dt}$. Then it is easy to see that $D_i^p=D_i$ and that there exists a commutative diagram
\[
\xymatrix{
\frac{\mathcal{O}_{U_{ji}}[t]}{(t^p-c_j)} \ar[d]_{D_j}\ar[r]^{\phi_{ij}} & \frac{\mathcal{O}_{U_{ij}}[t]}{(t^p-c_i)} \ar[d]^{D_i} \\
\frac{\mathcal{O}_{U_{ji}}[t]}{(t^p-c_j)} \ar[r]^{\phi_{ij}} & \frac{\mathcal{O}_{U_{ij}}[t]}{(t^p-c_i)}
}
\] 
Therefore the derivations $D_i$ glue to a global derivation $D$ on $X$ such that $D^p=D$. Then by Proposition~\ref{sec1-prop1} $D$ induces a $\mu_p$-action on $X$ and $Y$ is the quotient of $X$ by this action.

Suppose that $H^0(Y,L)\not=0$. Then there exists a nonzero map $\sigma \colon L^{-1} \rightarrow \mathcal{O}_Y$. Suppose $U_i$ is an open affine cover of $Y$ and that $a_{ij} \in \mathcal{O}_{U_{ij}}^{\ast}$ are the glueing data of $L^{-1}$. Then $\sigma$ is obtained by glueing maps $\sigma_i \colon \mathcal{O}_{U_i} \rightarrow \mathcal{O}_{U_i}$ that are compatible with the glueing data of $L^{-1}$, i.e., there are commutative diagrams
\[
\xymatrix{
\mathcal{O}_{U_{ij}}\ar[d]_{\lambda_{ij}}\ar[r]^{\sigma_i} & \mathcal{O}_{U_{ij}}\\
\mathcal{O}_{U_{ji}}\ar[ur]^{\sigma_j}
}
\]
where $\lambda_{ij}(x)=a_{ji}x$, for any $x \in \mathcal{O}_{U_{ij}}$. Suppose that $d_i=\sigma_i(1)$. Then $d_i=a_{ji}d_j$. Let now $D_i=d_i\frac{d}{dt}$. These are $\mathcal{O}_{U_i}$-derivations of $\mathcal{O}_{V_i}$ such that $D_i^p=0$. An argument identical to the one used in the previous case shows that the $D_i$ glue to a global derivation $D$ on $X$ such that $D^p=0$. By Proposition~\ref{sec1-prop1}, $D$ induces an action of $\alpha_p$ on $X$ and $Y$ is the quotient of $X$ by this action.

\begin{remark}
The statement of Theorem~\ref{sec3-th1}.1 was initially proved in~\cite{Ek86} as well as~\ref{sec3-th1}.3 in the case when $Y$ is normal. The rest are to the best of my knowledge new.
\end{remark}

\end{proof}

% !TEX root = paper.tex
\section{Quotients by $\mu_p$-actions.}\label{mu-p}
Let $X$ be a scheme defined over a field $k$ of characteristic $p>0$. Suppose that $X$ admits a nontrivial $\mu_p$-action. By Proposition~\ref{sec1-prop1} this action is induced by a global vector field $D$ of $X$ such that $D^p=D$. let $\pi \colon X \rightarrow Y$ be the quotient, which exists and is finite of dgree $p$~\cite{Mu70}. The purpose of this section is to describe the structure of the quotient map $\pi$.

\begin{lemma}\label{sec4-lemma1}
Let $k$ be a field of characteristic $p>0$. Let $A$ be a $k$-algebra and $M$ an $A$-module. Let $\Phi \colon M \rightarrow  M$ be an $A$-module homomorphism such that $\Phi^p=\Phi$. Then $M =\oplus_{k=0}^{p-1}M_k$, where $M_k=\{m \in M| \Phi(m)=km\}$.
\end{lemma}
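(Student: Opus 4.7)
The plan is to use the fact that in characteristic $p$ the polynomial $X^p - X \in \mathbb{F}_p[X]$ factors as
\[
X^p - X = \prod_{k=0}^{p-1}(X-k),
\]
with pairwise coprime linear factors (since the constants $k - j$ for $k \neq j$ in $\mathbb{F}_p$ are units). Because $\Phi$ satisfies the polynomial $X^p - X$, this factorization yields, via Lagrange interpolation, an orthogonal decomposition of the identity into projectors onto the eigenspaces of $\Phi$.

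Concretely, for each $k \in \{0,1,\ldots,p-1\}$ I would define the $A$-module endomorphism
\[
e_k := \prod_{\substack{j=0\\ j\neq k}}^{p-1}\frac{\Phi - j\cdot\mathrm{id}_M}{k - j},
\]
which makes sense since each $k-j \in \mathbb{F}_p^\ast$ is a unit in $A$. The key verifications are then: (i) $(\Phi - k\cdot\mathrm{id})\, e_k$ is a scalar multiple of $\Phi^p - \Phi = 0$, so $e_k(M) \subseteq M_k$; (ii) $e_k e_j = 0$ whenever $k \neq j$, because their product contains all $p$ factors $\Phi - \ell\cdot\mathrm{id}$, hence again vanishes by $\Phi^p = \Phi$; and (iii) $\sum_{k=0}^{p-1} e_k = \mathrm{id}_M$, which follows from the Lagrange interpolation identity applied to the constant polynomial $1$ at the nodes $0,1,\ldots,p-1$, an identity of polynomials in $\mathbb{F}_p[X]$ that then specializes to the operator $\Phi$.

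Once these three properties are in hand, the decomposition is immediate: for any $m \in M$ write $m = \sum_k e_k(m)$ with $e_k(m) \in M_k$, giving $M = \sum_k M_k$; and if $\sum_k m_k = 0$ with $m_k \in M_k$, then applying $e_j$ uses (i), (ii) and the fact that $e_j$ restricts to the identity on $M_j$ (which follows from (iii) and (ii)) to conclude $m_j = 0$. Hence $M = \bigoplus_{k=0}^{p-1} M_k$.

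The main obstacle, such as it is, is checking the Lagrange identity $\sum_k \prod_{j\neq k}\frac{X-j}{k-j} = 1$ in $\mathbb{F}_p[X]$ and verifying orthogonality of the $e_k$'s; both are purely formal polynomial identities, so no genuine difficulty arises beyond careful bookkeeping with the characteristic-$p$ arithmetic. The essential input is simply Fermat's little theorem, which guarantees that $X^p - X$ has all of $\mathbb{F}_p$ as simple roots.
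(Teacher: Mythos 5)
Your proof is correct and is essentially the paper's argument: your idempotents $e_k=\prod_{j\neq k}(\Phi-j\cdot\mathrm{id})/(k-j)$ coincide (via Wilson's theorem, since $\prod_{j\neq k}(k-j)=(p-1)!=-1$ in $\mathbb{F}_p$) with the operators $f_k(\Phi)$, $f_k(x)=-(x^p-x)/(x-k)$, that the paper uses, and both proofs rest on the same partition-of-unity identity $\sum_k f_k=1$. The only differences are in the bookkeeping: you verify that identity by Lagrange interpolation (a polynomial of degree at most $p-1$ taking the value $1$ at all $p$ points of $\mathbb{F}_p$ is constant) and obtain directness by applying the orthogonal idempotents, whereas the paper checks the identity by a derivative computation combined with Wilson's theorem and proves directness with a Vandermonde determinant; both routes are sound.
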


\begin{proof}
Let $f(x)=x^p-x \in \mathbb{Z}_p[x]$. Then $f(x)=\prod_{k=0}^{p-1}(x-k)$. From the assumption it follows that  $f(\Phi)=0$. Let $f_{k}(x)=-\prod_{i \not= k}(x-i)=-(x^p-x)/(x-k)$. I will show that 
\begin{gather}\label{sec4-eq1}
\sum_{k=0}^{p-1}f_k(x)=1.
\end{gather}
In order to show this I will first show that  $\sum_{k=0}^{p-1}f^{\prime}_k(x)=0$. Indeed,
\[
f_k^{\prime}(x)=\frac{x^p-k}{(x-k)^2}=\frac{(x-k)^p}{(x-k)^2}=(x-k)^{p-2}.
\]
Then \[
g(x)=\sum_{k=0}^{p-1}f^{\prime}_k(x)=\sum_{k=0}^{p-1}(x-k)^{p-2}
\]
is a polynomial in $\mathbb{Z}_p[x]$ of degree at most $p-2$. However, for any $a \in \mathbb{Z}_p$,
\[
g(a)=\sum_{k=0}^{p-1}(a-k)^{p-2}=\sum_{s=0}^{p-1}s^{p-2}=\sum_{s=1}^{p-1}s^{-1}=\sum_{s=1}^{p-1}s=\frac{p(p-1)}{2}=0.
\]
Hence $g(x)$ has $p$-roots in $\mathbb{Z}_p$ and since its degree is at most $p-2$ it follows that $g(x)=0$. Hence 
\[
\left(\sum_{k=0}^{p-1}f_k(x)\right)^{\prime}=0
\]
and therefore $\sum_{k=0}^{p-1}f_k(x)=h(x^p)$, for some $h(x)\in \mathbb{Z}_p[x]$. However, since the degree of $\sum_{k=0}^{p-1}f_k(x)$ is at most $p-1$, it follows that $h(x)=c, c\in \mathbb{Z}_p$. Then \[
c=h(0)=\sum_{k=0}^{p-1}f_k(0)=f_0(0)=-(p-1)!=1,
\]
from Wilson's theorem. Therefore~\ref{sec4-eq1} holds. Let $m \in M$. Then from~\ref{sec4-eq1} it follows that
\[
m=\sum_{k=0}^{p-1}f_k(\Phi)(m).
\]
Now from the definition of $f_k(x)$, $(x-k)f_k(x)=x^p-x$ and therefore $\Phi(f_k(\Phi)(m))=kf_k(\Phi)(m)$. Hence $f_k(\Phi)(m) \in M_k$ and therefore \[
M=\sum_{k=1}^{p-1}M_k.
\]
Next I will show that the sum is direct. Suppose that there are $m_k \in M_k$, $k=0,1,\ldots,p-1$ such that 
\[
m_0+m_1+\cdots + m_{p-1}=0.
\]
Applying $\Phi$ $(p-1)$-times we get that
\[
\sum_{k=1}^{p-1}k^sm_k = 0,
\]
for $s=1,2, \ldots, p-1$. This equation can be written in matrix form as 
\[
A \cdot B = \mathbf{0},
\]
where 

\begin{tabular}{lr}
$ A=\begin{pmatrix}
1 & 2 & \cdots & p-1 \\
1 & 2^2 & \cdots & (p-1)^2 \\
\hdotsfor{4} \\
1 & 2^{p-1} & \cdots & (p-1)^{p-1} \\
\end{pmatrix}, $ 
&
$ B= \begin{pmatrix}
m_1 \\
m_2 \\
\vdots \\
m_{p-1}
\end{pmatrix}
$
\end{tabular}

But $A$ is simply a Vandermonde matrix and hence 
\[
\det(A)=\prod_{1 \leq i < j \leq p-1}(i-j) \not= 0.
\]
Therefore $B=0$ and hence $m_k=0$, for all $k=0,1,\ldots, p-1$. Therefore 
\[
M=\oplus_{k=0}^{p-1}M_k,
\]
as claimed.
\end{proof}

The next result shows the relation between $L_k(D)$ and $L_{kd}(D)$ (as defined in~\ref{def-of-L-N}) in the case when $D^p=D$. This is very important in the study of the quotient of a scheme with a $\mu_p$-action.

\begin{proposition}\label{mu-p-decomposition}
Let $A$ be an intergral domain which is also a $k$-algebra where $k$ is a field of characteristic $p>0$. Let $D\in \mathrm{Der}_k(A)$ a non-trivial $k$-derivation of $A$ such that $D^p=D$. Then 
\begin{enumerate}
\item There is a direct sum decomposition of $B=A^D$-modules 
\begin{gather}\label{sec4-eq2}
A = \oplus_{k=0}^{p-1}L_k(D).
\end{gather}
Moreover, $L_k(D)$ are rank 1 torsion free $B$-modules.
\item For any $k \in \{1,2,\ldots,p-1\}$, let 
\[
\sigma_d \colon L^{\otimes d}_k(D) \rightarrow L_{\overline{kd}}(D)
\]
where $\overline{kd}=kd \mod p$, be the map defined by $\sigma_d(a_1\otimes a_2 \otimes \cdots \otimes a_d)= a_1a_2\cdots a_d$. Then the support of the kernel and cokernel of $\sigma_d$ is contained in the fixed locus of $D$. In particular, if the fixed locus of $D$ is empty then $\sigma_d$ is an isomorphism.
\end{enumerate}
\end{proposition}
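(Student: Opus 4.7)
The decomposition in $(1)$ is the immediate application of Lemma $\ref{sec4-lemma1}$ to $M = A$ with $\Phi = D$: the relation $D(ba) = b D(a)$ for $b \in B$ shows $D$ is $B$-linear, and $D^p = D$ is hypothesized. Each summand $L_k(D)$ is a $B$-submodule of the integral domain $A$, so is torsion-free. To pin down the rank, I would extend $D$ to the derivation $D'$ of $K(A) = A \otimes_B K(B)$ and apply the same lemma to get $K(A) = \bigoplus_{k=0}^{p-1} L_k(D')$ with $L_0(D') = K(B)$ and $L_k(D') = L_k(D) \otimes_B K(B)$. Proposition $\ref{general-structure-theory}$, applied to the rank-one foliation generated by $D$, gives $[K(A):K(B)] = p$. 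Since $D \neq 0$, some $L_{k_0}(D')$ with $k_0 \neq 0$ contains a nonzero $t_0$, and multiplication by $t_0$ and by $t_0^{-1} \in L_{-k_0}(D')$ are mutually inverse $K(B)$-linear maps between $L_j(D')$ and $L_{j+k_0}(D')$; since $\gcd(k_0, p) = 1$, iteration yields $L_j(D') \cong_{K(B)} L_0(D') = K(B)$ for every $j$. In particular each $L_k(D)$ has generic rank $1$.

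For $(2)$, it suffices to prove that for every prime $\mathfrak{q} \in \mathrm{Spec}\, B$ lying outside the image of the fixed locus $Z \subset \mathrm{Spec}\, A$, the localization of $\sigma_d$ at $\mathfrak{q}$ is an isomorphism. Since $\pi$ is finite and surjective, this hypothesis is equivalent to saying that the ideal $I_\mathfrak{q} \subset A_\mathfrak{q} := A \otimes_B B_\mathfrak{q}$ generated by $D(A_\mathfrak{q})$ is all of $A_\mathfrak{q}$; Propositions $\ref{sec2-prop2}$ and $\ref{subsec2-prop3}$ let me pass to $A_\mathfrak{q}$ with induced decomposition $A_\mathfrak{q} = \bigoplus L_k(D)_\mathfrak{q}$. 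Writing $1 = \sum_i \alpha_i D\beta_i$ in $A_\mathfrak{q}$ and decomposing $\alpha_i = \sum_j \alpha_i^{(j)}$, $\beta_i = \sum_l \beta_i^{(l)}$, the $L_0$-component of the equation reads
\[
1 = \sum_{l=1}^{p-1} l \sum_i \alpha_i^{(p-l)} \beta_i^{(l)} \qquad \text{in } B_\mathfrak{q}.
\]
Because $B_\mathfrak{q}$ is local, this forces $L_{k_0, \mathfrak{q}} \cdot L_{p-k_0, \mathfrak{q}} = B_\mathfrak{q}$ for some $k_0 \in \{1, \ldots, p-1\}$.

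Fix a nonzero $t \in L_1(D')$ (it spans $L_1(D')$ by $(1)$, and $t^p \in K(B)$) and identify $L_k(D) = I_k \cdot t^k$ for a fractional ideal $I_k \subset K(B)$. The previous equality becomes $I_{k_0, \mathfrak{q}} \cdot I_{p-k_0, \mathfrak{q}} = B_\mathfrak{q}$. Writing $1 = \sum x_i y_i$ with $x_i \in I_{k_0, \mathfrak{q}}$, $y_i \in I_{p-k_0, \mathfrak{q}}$ exhibits $I_{k_0, \mathfrak{q}}$ as a direct summand of $B_\mathfrak{q}^n$ (via $z \mapsto (z y_i)$ and $(b_i) \mapsto \sum b_i x_i$), hence finitely generated projective and, over the local ring $B_\mathfrak{q}$, free of rank one. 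Thus $I_{k_0, \mathfrak{q}} = B_\mathfrak{q} s$ and $I_{p - k_0, \mathfrak{q}} = B_\mathfrak{q} s'$ with $ss' \in B_\mathfrak{q}^\times$. Setting $u := s t^{k_0} \in L_{k_0, \mathfrak{q}}$ and $v := s' t^{p - k_0} \in L_{p - k_0, \mathfrak{q}}$, one has $uv = ss' t^p \in B_\mathfrak{q}^\times$, so $u$ is a unit of $A_\mathfrak{q}$. Choosing $m$ with $k_0 m \equiv 1 \pmod p$, the element $u_1 := u^m \in L_1(D)_\mathfrak{q}$ is also a unit of $A_\mathfrak{q}$.

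For any $\ell \in L_{k, \mathfrak{q}}$ one has $D(\ell u_1^{-k}) = k \ell u_1^{-k} - k \ell u_1^{-k} = 0$, so $\ell u_1^{-k} \in B_\mathfrak{q}$ and hence $L_{k, \mathfrak{q}} = B_\mathfrak{q} \cdot u_1^k$ is free of rank one for every $k$. Consequently $(L_k^{\otimes d})_\mathfrak{q}$ is freely generated by $u_1 \otimes \cdots \otimes u_1$, $L_{\overline{kd}, \mathfrak{q}} = B_\mathfrak{q} \cdot u_1^{\overline{kd}}$, and $\sigma_d$ sends the generator to $u_1^{kd} = (u_1^p)^{(kd - \overline{kd})/p} \, u_1^{\overline{kd}}$, which is a unit multiple of the generator of the target because $u_1^p \in B_\mathfrak{q}^\times$. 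Hence $\sigma_d$ is an isomorphism at $\mathfrak{q}$, proving $(2)$. The main technical step is the transition in the previous paragraph from the ideal-theoretic identity $L_{k_0} \cdot L_{p - k_0} = B_\mathfrak{q}$ to a free generator of $L_{k_0, \mathfrak{q}}$: this invertibility result is what converts the ``away from the fixed locus'' hypothesis into honest multiplicative control of the $L_k$, and is where I expect the main effort to lie.
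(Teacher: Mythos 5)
Your proof is correct, and it reaches the conclusion by a recognizably different route than the paper, so a comparison is worthwhile. For part (1) the paper also starts from Lemma~\ref{sec4-lemma1}, but it proves $L_k(D)\not=0$ by exhibiting the explicit eigenvector $z_a=Da+D^2a+\cdots+D^{p-1}a$ (nonzero for some $a$ by the independence of $D,\ldots,D^{p-1}$) and its powers $z_a^k$, and only then reads off the ranks at the generic point; your multiplication-by-$t_0$ symmetry of the generic eigenspaces is a clean substitute. One caveat: $[K(A):K(B)]=p$ should be justified by Jacobson's theorem for the derivation $D'$ of the field $K(A)$ (the correspondence the paper cites from~\cite{Jac64}) rather than by Proposition~\ref{general-structure-theory}, which is stated for normal varieties over an algebraically closed field, whereas here $A$ is an arbitrary domain; this is the same fact the paper itself uses tacitly. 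For part (2) both arguments share the same core idea, namely to produce, away from the fixed locus, a unit eigenvector in $L_1$ and use its powers to trivialize every $L_k$; but the paper localizes at a non-fixed point of $\mathrm{Spec}\,A$, uses that $A$ is there a free $B$-module of rank $p$, derives $D(L_1(D))\not\subset m_A$ by the contradiction ``otherwise $m_A$ is fixed,'' and treats only $k=1$ explicitly, while you localize on $\mathrm{Spec}\,B$ at $\mathfrak{q}$, expand $1=\sum_i\alpha_iD\beta_i$ into eigencomponents and extract the unit from the $L_0$-component, handling all $k$ and $d$; your version needs no freeness of $A_{\mathfrak{q}}$ over $B_{\mathfrak{q}}$ and is in that respect more complete. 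Two small repairs, neither of which affects the outcome: the ideal identity should read $I_{k_0,\mathfrak{q}}\,I_{p-k_0,\mathfrak{q}}\,t^{p}=B_{\mathfrak{q}}$ (the factor $t^{p}$ need not be a unit of $B_{\mathfrak{q}}$), but the only consequence you use, $uv=ss't^{p}\in B_{\mathfrak{q}}^{\times}$, is exactly what the corrected identity gives; moreover the whole invertible-fractional-ideal detour is unnecessary, since $B_{\mathfrak{q}}$ being local already forces some single term $\alpha_i^{(p-k_0)}\beta_i^{(k_0)}$ in your displayed identity to be a unit, so you may take $u=\beta_i^{(k_0)}$ and $v=\alpha_i^{(p-k_0)}$ directly. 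Finally, $(L_k^{\otimes d})_{\mathfrak{q}}$ is generated by $u_1^{k}\otimes\cdots\otimes u_1^{k}$, not $u_1\otimes\cdots\otimes u_1$; this is only a typo, since your computation of the image as $u_1^{kd}=(u_1^{p})^{(kd-\overline{kd})/p}u_1^{\overline{kd}}$ is the correct one.
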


\begin{proof}

Let $\Phi \colon A \rightarrow A$ be the map defined by $\Phi(a)=Da$, for any $a \in A$. Then $\Phi^p=\Phi$. Hence from Proposition~\ref{sec4-lemma1} it follows that
\begin{equation}\label{direct-sum}
A = \oplus_{k=0}^{p-1}L_k(D).
\end{equation}
Since $L_k(D) \subset A$, $L_k(D)$ is a torsion free $B$-module. 

I will next show that $L_k(D) \not=0$, for any $k \in \{0,1,\ldots, p-1\}$. For any $a \in A$, let $z_a=Da+D^2a+\cdots + D^{p-1}a$. Then $Dz_a=z_a$. I claim that there is $a\in A$ such that $z_a \not= 0$. Suppose that this was not true. Then $D, D^2, \ldots, D^{p-1}$ would be linearly dependent over the base field $k$. But this is impossible~\cite[Theorem 25.4]{Ma86}. Let then $a \in A$ such that $z_a \not=0$. Then for any $k=1,2,\ldots,p-1$, $D(z_a^k)=kz_a^{k-1}Dz_a=kz_a^k$. Hence $z_a^k \in L_k(D)$. Therefore $L_k(D) \not= 0$, for any $k=0,1,\ldots,p-1$. Now localizing~\ref{direct-sum} at the generic point of $A$ and taking into consideration that $[K(A):K(B)]=p$ and that by Proposition~\ref{subsec2-prop3} the formation of the modules $L_k(D)$ commutes with localization, we get that $L_k(D)$ has rank 1 for all $k$. 

The second part of the proposition will follow if we show that the map $\sigma_d$ is an isomorphism at any point $P \in \mathrm{Spec}(A)$ that is not fixed by $D$. This is a local result and hence by Proposition~\ref{subsec2-prop3} we may assume that $A$ and $B=A^D$ are local rings with maximal ideals $m_A$ and $m_B$ and that $A$ is a free $B$-module of rank $p$~\cite[Theorem 1, page 104]{Mu70}. For simplicity I will only do the case when $k=1$. The rest are similar.

So suppose that $k=1$. I will first show that $D(L_1(D)) \not\subset m_A$. Suppose on the contrary that $D(L_1(D))\subset m_A$. Then I will show that $D(L_k(D)) \subset m_A$ for any $k$.  Let $a \in L_k$. Then there exists $\nu \in \mathbb{Z}$ such that $\nu k =1 \mod p$. Then $D(a^{\nu})=\nu k a^{\nu}=a^{\nu}$ and hence $a^{\nu} \in L_1$. But then $a^{\nu}=D(a^{\nu})\in m_A$, and therefore $a \in  m_A$. Hence $D(L_k(D)) \subset m_A$, for any $k \in \{1,2,\ldots,p-1\}$. Now from~\ref{direct-sum} it follows that $D(A) \subset m_A$ and hence $m_A$ is fixed under $D$, which is a contradiction. 

Hence $D(L_1(D)) \not\subset m_A$. Therefore there exists $a \in L_1(D) =D(L_1(D))$ such that $a=Da\not \in m_A$. Then $a^k \in L_k(D)$. $L_k(D)$ is a free $B$-module of rank 1. Let $x_k \in L_k(D)$ be a generator as a $B$-module. Then there exists $\lambda \in B$ such that $a^k=\lambda x_k$. If $\lambda \in m_B$, then $a^k \in m_A$ and hence $a \in m_A$, a contradiction. Hence $\lambda \not\in m_B$ and therefore $\lambda \in B^{\ast}$. Hence $a^k$ is also a generator of $L_k(D)$ as a $B$-module. This shows that the map
\[
\sigma_k \colon L_1(D)^{\otimes k} \rightarrow L_k(D)
\]
is surjective and hence an isomorphism.

\end{proof}

\begin{corollary}\label{sec4-cor1}
Let $X$ be an integral scheme defined over a field $k$ of characteristic $p>0$. Suppose that $X$ admits a nontrivial $\mu_p$-action induced by a nontrivial global vector field $D$ such that $D^p=D$. Let $\pi \colon X \rightarrow Y$ be the quotient. Then
\begin{enumerate}
\item There is a direct sum decomposition of $\mathcal{O}_Y$-modules
 \[
\pi_{\ast}\mathcal{O}_X=\oplus_{k=1}^{p-1}L_k(D),
\]
where $L_k(D)=\{a \in \mathcal{O}_X / \; Da=ka\}$ is a rank one torsion free sheaf of $\mathcal{O}_Y$-modules. 
\item For any $k \in \{1,2,\ldots, p-1\}$, $d \in \mathbb{N}$, let 
\[
\sigma_d \colon L_k(D)^{d} \rightarrow L_{\overline{kd}}(D),
\]
where $\overline{kd}=kd \mod p$, be the map defined locally by 
\[
\sigma_d(a_1\otimes a_2 \otimes \cdots \otimes a_d)= a_1a_2\cdots a_d.
\]
Then the support of the kernel and cokernel of $\sigma_d$ is in the fixed locus of the $\mu_p$-action. In particular, if $\mu_p$ acts freely on $X$ then $\sigma_d$ is an isomorphism. In particular, $L_1(D)^{\otimes p} \cong \mathcal{O}_Y$.
\item If  $X$ is $S_2$, then $L_k(D)$ is $S_2$ as well. Moreover, if $X$ is normal and Noetherian, then $L_k(D)$ is a reflexive $\mathcal{O}_Y$-module, for all $k=0,1,\ldots,p-1$. 
\end{enumerate}
\end{corollary}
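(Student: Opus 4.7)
The plan is to deduce the three statements from the affine-local result in Proposition~\ref{mu-p-decomposition}, globalized via the compatibility of $L_k(D)$ with localization established in Proposition~\ref{subsec2-prop3}.

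For part~\ref{sec4-cor1}.1, I would define the subsheaf $L_k(D) \subset \pi_\ast\mathcal{O}_X$ as the kernel of the $\mathcal{O}_Y$-linear sheaf map $D - k \cdot \mathrm{id}$; this is $\mathcal{O}_Y$-linear precisely because sections of $\mathcal{O}_Y$ are annihilated by $D$. The direct sum decomposition is a local statement on $Y$, and at the stalk level Proposition~\ref{subsec2-prop3} identifies the stalk of $L_k(D)$ with the analogous module for the localized derivation, after which Proposition~\ref{mu-p-decomposition}.1 supplies both the decomposition and the fact that each summand is torsion-free of rank one.

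For part~\ref{sec4-cor1}.2, the multiplication map $\sigma_d$ globalizes because $L_k(D)$ is a subsheaf of the sheaf of rings $\pi_\ast\mathcal{O}_X$, and multiplication in $\pi_\ast\mathcal{O}_X$ carries $L_k(D)^{\otimes d}$ into $L_{\overline{kd}}(D)$. Localizing at an arbitrary point of $Y$ and appealing to Proposition~\ref{mu-p-decomposition}.2, the stalk of $\sigma_d$ is an isomorphism whenever the corresponding point of $X$ is not fixed by $D$, so the supports of $\ker \sigma_d$ and $\mathrm{coker}\,\sigma_d$ lie in the image of the fixed locus. If the action is free the fixed locus is empty and $\sigma_d$ is an isomorphism; applying this with $k=1$ and $d=p$ yields $L_1(D)^{\otimes p} \cong L_0(D) = \mathcal{O}_Y$, the last equality being the very definition of $L_0(D)$.

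For part~\ref{sec4-cor1}.3, finiteness of $\pi$ ensures that $\pi_\ast\mathcal{O}_X$ is $S_2$ as an $\mathcal{O}_Y$-module whenever $X$ is; since $L_k(D)$ is a direct summand of $\pi_\ast\mathcal{O}_X$, it inherits the $S_2$ property. When $X$ is normal and Noetherian, Proposition~\ref{prop3}.1 gives that $Y$ is normal, and a rank-one coherent $S_2$ sheaf on a normal Noetherian scheme is reflexive. The only mildly delicate step is the local-to-global passage in part~\ref{sec4-cor1}.2: one must confirm that the fixed locus of $D$ as defined before Proposition~\ref{prop3} agrees, stalk-wise, with the locus used in the proof of Proposition~\ref{mu-p-decomposition}.2 (namely, those maximal ideals $m_A$ with $D(A) \subset m_A$), which is immediate from the definitions. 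Everything else is bookkeeping.
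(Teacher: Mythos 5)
Your proposal is correct and follows essentially the same route as the paper: parts 1 and 2 are reduced to Proposition~\ref{mu-p-decomposition} (using Proposition~\ref{subsec2-prop3} for compatibility with localization), and part 3 is obtained by noting that $L_k(D)$ is a direct summand of the $S_2$ sheaf $\pi_{\ast}\mathcal{O}_X$ (the paper phrases this via local cohomology vanishing) together with the equivalence of $S_2$ and reflexivity for torsion-free sheaves on normal integral schemes, citing~\cite{Ha94}. No gaps beyond what the paper itself leaves implicit.
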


\begin{proof}
By Proposition~\ref{sec1-prop1}, the $\mu_p$-action on $X$ is induced by a nontrivial global vector field $D$ on $X$ such that $D^p=D$. Then $D$ induces an $\mathcal{O}_Y$-sheaf of modules map $\Phi \colon \mathcal{O}_X \rightarrow \mathcal{O}_X$, by $\Phi(a)=Da$, such that $\Phi^p=\Phi$. Now the parts 1 and 2 of the corollary follow immediately from Proposition~\ref{mu-p-decomposition}, equation~\ref{sec4-eq2}. 

It remains to prove the third part. Suppose that $X$ is $S_2$. Then for any $Z \subset Y$ such that $\mathrm{codim}(Y-Z,Y) \geq 2$, $\underline{\mathrm{H}}^i_Z(\pi_{\ast}\mathcal{O}_X)=0$, for $i=0,1$. Hence from the first part of the corollary it follows that \[
\oplus_{k=0}^{p-1}\underline{\mathrm{H}}^i_Y(L_k(D))=0,
\]
for $i=0,1$. Hence $\underline{\mathrm{H}}^i_Y(L_k(D))=0$, $i=0,1$, and therefore $L_k(D)$ is an $S_2$ sheaf. If in addition $X$ is normal, then $L_k(D)$ is reflexive since the notions of reflexive and $S_2$ sheaves coincide for normal integral schemes~\cite[Theorem 1.9]{Ha94}.
\end{proof}

\begin{remark}
By Proposition~\ref{subsec2-prop3}, the formation of the sheaves $L_k(D)$ is stable by passing to completion and localization. Therefore it is possible to calculate their local properties by passing to either the completion of $X$ at a point or the localization.
\end{remark}

\begin{theorem}\label{sec4-th1}
Let $X$ be a normal integral scheme of finite type over a perfect field $k$ of characteristic $p>0$. Suppose that $X$ admits a nontrivial $\mu_p$-action. Let $\pi \colon X \rightarrow Y$ be the quotient. Then there exists a factorization
\[
\xymatrix{
X \ar[r]^{\phi}\ar[d]_{\pi} & Z \ar[dl]^{\delta} \\
Y   & \\
}
\]
such that, \[
\delta \colon Z=\mathrm{Spec}_Y\left( \oplus_{k=0}^{p-1} L^{[k]}\right) \rightarrow Y, 
\]
is the $p$-cyclic cover over $Y$ defined by a rank one reflexive sheaf $L$ of $Y$ and a section $s \in H^0(Y,L^{[-p]})$, and $\phi \colon X \rightarrow Z$ is the normalization of $Z$. Moreover, 
\begin{enumerate}
\item Suppose that $X$ is smooth and let $Z$ be a connected component of the fixed locus of the $\mu_p$-action. Then $L$ and the section $s \in H^0(Y,L^{[-p]})$ can be chosen in such a way so that there exists an open subset $U$ of $X$ containing $Z$ such that the restriction of $\phi$ on $U$ is an isomorphism. 
\item If $X$ is projective over $k$, then  there exists a rank one reflexive sheaf on $Y$ such that \[
\omega_Z =\left(\pi^{\ast}(\omega_Y \otimes M^{[1-p]})\right)^{[1]}.
\]
\item If either $p=2$ or $\mu_p$ acts freely on $X$, then $\phi$ is an isomorphism for a a suitable choice of $L$ and $s \in H^0(L^{[-p]})$. If the $\mu_p$-action is free then $L^p \cong \mathcal{O}_Y$. 
\end{enumerate}

\end{theorem}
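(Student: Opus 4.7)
The plan is to construct the cyclic cover data $(L, s)$ directly from the eigendecomposition $\pi_*\mathcal{O}_X = \bigoplus_{k=0}^{p-1} L_k(D)$ provided by Corollary~\ref{sec4-cor1}. Set $L := L_1(D)$, which is reflexive of rank one by Corollary~\ref{sec4-cor1}.3, and let $\sigma_k \colon L^{[k]} \to L_k(D)$ denote the reflexive hull of the multiplication map $L_1^{\otimes k} \to L_k(D)$ induced from $\pi_*\mathcal{O}_X$. Since $\pi_*\mathcal{O}_X$ is a sheaf of integral domains (as $X$ is integral), every $\sigma_k$ is injective, and by Corollary~\ref{sec4-cor1}.2 each $\sigma_k$ is an isomorphism away from the fixed locus of $D$. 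In particular $\sigma_p \colon L^{[p]} \to L_0 = \mathcal{O}_Y$ provides the required section $s \in \mathrm{Hom}_Y(L^{[p]}, \mathcal{O}_Y) = H^0(Y, L^{[-p]})$.

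Next I would endow $\bigoplus_{k=0}^{p-1}L^{[k]}$ with the standard $p$-cyclic cover $\mathcal{O}_Y$-algebra structure: the reflexive multiplications $L^{[i]} \otimes L^{[j]} \to L^{[i+j]}$ for $i+j < p$, composed with $s$ to wrap back to $L^{[i+j-p]}$ when $i+j \geq p$. These multiplications agree with those of $\pi_*\mathcal{O}_X$ under the $\sigma_k$, so the inclusion $\bigoplus L^{[k]} \hookrightarrow \pi_*\mathcal{O}_X$ is an $\mathcal{O}_Y$-algebra morphism. Setting $Z := \mathrm{Spec}_Y(\bigoplus_{k=0}^{p-1} L^{[k]})$ with projection $\delta$, the dual of this inclusion is the sought $\phi \colon X \to Z$, satisfying $\delta \circ \phi = \pi$. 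Over the complement of the image of the fixed locus every $\sigma_k$ is an isomorphism, so $\phi$ is an isomorphism there; hence $\phi$ is finite and birational, and since $X$ is normal it realizes $X$ as the normalization of $Z$.

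For Part 1, the canonical choice $L = L_1$ need not suffice: one instead takes $L = L_j$ for a weight $j$ chosen so that, near the prescribed connected fixed component $Z_0$, some local generator of $L_j$ has powers generating every $L_{jk}$ locally. Since $\mu_p$ is diagonalizable and $X$ is smooth, the action linearizes on a formal neighborhood of each point of $Z_0$ with local model $D = \sum_i w_i x_i \partial/\partial x_i$, and an appropriate choice of $j$ relative to the weights $\{w_i\}$ produces such a generator, making each $\sigma_{jk}$ an isomorphism on an open $U \supset Z_0$, hence $\phi|_U$ an isomorphism. The local-to-global selection of $j$, and verifying that the ensuing reflexive sheaf $L_j$ on $Y$ has the desired matching property in a full Zariski-open neighborhood of $Z_0$, is the main technical obstacle.

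For Part 2, I would apply Grothendieck duality to the finite morphism $\delta$. Decomposing $\mathcal{H}om_Y(\bigoplus_{k=0}^{p-1} L^{[k]}, \omega_Y)$ along the $\mathbb{Z}/p$-grading, its top-degree summand $L^{[1-p]} \otimes \omega_Y$ generates $\delta^{!}\omega_Y$ as a $\delta_*\mathcal{O}_Z$-module, giving $\omega_Z \cong \bigl(\delta^*(\omega_Y \otimes L^{[1-p]})\bigr)^{[1]}$, which is the claimed identity with $M = L$. Finally, Part 3 is immediate from the main construction: when $p = 2$ there are only the summands $\mathcal{O}_Y$ and $L = L_1$, so $\bigoplus L^{[k]} = \pi_*\mathcal{O}_X$ tautologically and $\phi$ is an isomorphism; and when $\mu_p$ acts freely, Corollary~\ref{sec4-cor1}.2 makes every $\sigma_k$ a global isomorphism, whence $\phi$ is an isomorphism and $\sigma_p$ exhibits $L^{[p]} \cong \mathcal{O}_Y$.
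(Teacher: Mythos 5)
Your main construction of $(L,s)$, of the algebra structure on $\oplus_{k=0}^{p-1}L^{[k]}$, and of the factorization $\pi=\delta\circ\phi$ with $\phi$ the normalization is essentially the paper's argument, and your treatment of part 3 coincides with it. The problems are in parts 1 and 2. For part 1 you correctly guess that $L=L_1$ must be replaced by an eigensheaf $L_j$ adapted to the chosen fixed component, but you then declare the selection of $j$ and its ``local-to-global'' verification to be ``the main technical obstacle'' and stop; that obstacle \emph{is} the content of part 1, so as written it is unproved. The paper closes it as follows: by the Rudakov--Shafarevich normal form $D=\sum_i a_ix_i\,\partial/\partial x_i$ each connected component of the fixed locus is smooth and irreducible. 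If the component $Z_0$ has codimension $\geq 2$, then on $U=X$ minus the other components the action is free off $Z_0$, so $\phi|_U$ is an isomorphism in codimension one and, being finite onto an $S_2$ target, an isomorphism --- already for $L=L_1(D)$. If $Z_0$ is a divisor, then along it $D=ax_1\,\partial/\partial x_1$ with a \emph{single} weight $a\in\mathbb{Z}_p^{\ast}$, which is automatically constant on the connected component (so there is no local-to-global issue to agonize over), and the explicit local computation $L_{\overline{ak}}(D)=\mathcal{O}_Y\cdot x_1^k$ shows that $\sigma_k\colon L_a(D)^{\otimes k}\rightarrow L_{\overline{ak}}(D)$ is surjective near $Z_0$; hence $L=L_a(D)$ works. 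Your proposal never produces this choice nor the verification, so part 1 has a genuine gap.

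For part 2 your argument proves a different statement from the one intended. You compute, via duality for $\delta$, the dualizing sheaf of the cyclic cover and get $M=L=L_1$ with a $\delta^{\ast}$-formula; but the claimed identity involves $\pi^{\ast}$, and the paper's proof makes clear that the formula is obtained on $X$ by gluing the local torsor adjunctions of part 1: near each divisorial fixed component $Z_0$ one has $\omega_{U_{Z_0}}=\pi^{\ast}(\omega_{V_{Z_0}}\otimes L_{k_{Z_0}}(D)^{1-p})$, and $M=\bigl(\otimes_{Z_0}L_{k_{Z_0}}(D)\bigr)^{[1]}$ is built from the \emph{component-dependent} weights $k_{Z_0}$. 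Taking $M=L_1$ is wrong in general as soon as some divisorial fixed component has weight different from $1$, since $L_1^{[k]}$ and $L_k$ differ exactly along the fixed divisors; moreover the dualizing sheaf of the cover differs from the sheaf in the statement precisely because $\phi$ need not be an isomorphism along those divisors (the cokernels of the $\sigma_k$ are supported there), so your identification ``which is the claimed identity with $M=L$'' is not justified. (A small further point in the main step: injectivity of $\sigma_k$ should be argued for the reflexive-hull map $L^{[k]}\rightarrow L_k(D)$ as a nonzero map of rank-one torsion-free sheaves, or as in the paper by restricting to the big open set where $\pi_{\ast}\mathcal{O}_X$ is locally free; the bare multiplication map out of $L_1^{\otimes k}$ is not injective just because the target is a sheaf of domains.)
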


\begin{proof}
Since $X$ is normal, it satisfies condition $S_2$. Hence, $\pi_{\ast}\mathcal{O}_X$ is an $\mathcal{O}_Y$-module that satisfies condition $S_2$. Since $X$ is normal, $Y$ is also normal and therefore $\pi_{\ast}\mathcal{O}_X$ is a reflexive $\mathcal{O}_Y$-module~\cite{Ha94}. Moreover, by Proposition~\ref{sec1-prop1} the $\mu_p$-action is induced by a global vector field $D$ on $X$ such that $D^p=D$. By Corollary~\ref{sec4-cor1},
\[
\pi_{\ast}\mathcal{O}_X=\oplus_{k=0}^{p-1}L_k(D).
\]
Let $k \in \{1,2,\ldots,p-1\}$ and $L=L_k(D)$. Then there exist natural $\mathcal{O}_Y$-module maps \[
\sigma_d \colon L^{\otimes d} \rightarrow L_{\overline{kd}}(D) \subset \pi_{\ast}(\mathcal{O}_X),
\]
where $\overline{kd}=kd \mod p$, given locally by $\sigma_d(a_1\otimes \cdots \otimes a_d)=a_1\cdots a_d \in L_{\overline{kd}}(D)$. Note that by Proposition~\ref{mu-p-decomposition}, the support of the kernel and cokernel of $\sigma_d$ is contained in the fixed locus of the $\mu_p$-action.

Let $U \subset Y$ be the set of points of $Y$ where $\pi_{\ast}\mathcal{O}_X$ is free. Since $\pi_{\ast}\mathcal{O}_X$ is reflexive, $U$ contains the smooth locus of $Y$ and since $k$ is perfect and $X$ of finite type over $k$, $\mathrm{codim}(Y-U,Y)\geq 2$. Hence the restriction $\pi_{\ast}\mathcal{O}_X|_U$ is locally free and of rank $p$, since $\pi$ has degree $p$. Hence 
$L_k(D)|_U$ is invertible for $k=1,2,\ldots,p-1$. Hence the maps $\sigma_d$ are injective over a codimension 2 open subset of $Y$ and hence, after taking double duals, the maps
\[
\sigma_d \colon L^{[d]} \rightarrow \pi_{\ast}(\mathcal{O}_X)
\]
are also injective giving an injection of sheaves of abelian groups 
\[
\oplus_{d=0}^{p-1}L^{[d]} \subset \pi_{\ast}\mathcal{O}_X.
\]
Now observe that the map $\sigma_p \colon L^{\otimes p} \rightarrow \pi_{\ast}\mathcal{O}_X$ given by $\sigma_p(a_1 \otimes \cdots \otimes a_p)=a_1\cdots a_p$ is a map of $\mathcal{O}_Y$ modules and moreover the image of $\sigma_p$ is in $\mathcal{O}_Y$ and hence gives a map $L^p \rightarrow \mathcal{O}_Y$ which induces a map $L^{[p]} \rightarrow \mathcal{O}_Y$. This map corresponds to a section of $L^{[-p]}$ and gives a sheaf of rings structure on $ \oplus_{d=0}^{p-1}L^{[d]}$ compatible with the sheaf of rings structures of $\mathcal{O}_Y$ and $\mathcal{O}_X$. Hence there exists inclusions of sheaves of $\mathcal{O}_Y$-algebras
\[
\mathcal{O}_Y \subset  \oplus_{d=0}^{p-1}L^{[d]} \subset \pi_{\ast}\mathcal{O}_X.
\]
Let $Z=\mathrm{Spec}\left(  \oplus_{d=0}^{p-1}L^{[d]} \right)$. Then there exists a factorization 
\begin{gather}\label{sec4-diagram-1}
\xymatrix{
X \ar[r]^{\phi}\ar[d]_{\pi} & Z \ar[dl]^{\delta} \\
Y   & \\
}
\end{gather}
as claimed in the first part of the theorem. Over $U$, $\delta$ is an $\alpha_L$-torsor and in particular it is finite of degree $p$. But since $\pi$ is also of degree $p$ it follows that $\phi$ is of degree zero and hence it is birational. Since $X$ is normal it follows that $X$ is the normalization of $Z$. Finally I would like to mention that $L=L_1(D)$ would be the standard choice of $L$.

Suppose that $X$ is smooth and let $Z\subset X$ be a connected component of the fixed locus of $D$.  I will show that $Z$ is smooth and irreducible. Let $P \in Z$ be any closed point. Then by~\cite{R-S76}, in suitable local coordinates at $P \in X$, $x_1, \ldots, x_n$, $n =\dim X$, 
\begin{gather}\label{sec4-eq3}
D=a_1x_1\partial /\partial x_1 + \cdots + a_nx_n \partial /\partial x_n,
\end{gather}
where $a_i \in \mathbb{Z}_p$, $i=1,\ldots, n$. Then the fixed locus of $D$ is defined by the ideal $(a_1x_1,\ldots,a_nx_n)$ and therefore it is smooth and irreducible. 

Let $U=X-\cup_{W\not= Z}W$, where $W$ runs over all connected components of the fixed locus of $D$, or equivalently of the $\mu_p$-action.

Suppose that $\mathrm{codim}(Z,X) \geq 2$. Then $U$ is an open neighborhood of $Z$ in $X$ such that the $\mu_p$-action on $U-Z$ is free and therefore $U-Z \rightarrow Y-\pi(Z)$ is a $\mu_p$-torsor. Let $L=L_1(D)$. Then the restriction of $\phi$ on $U$ is an isomorphism in codimension 2 and hence since it is finite it is an isomorphism.  

Suppose that $\mathrm{codim}(Z,X)=1$. By Proposition~\ref{prop3}, the singular points of $Y$ are under the isolated fixed points of the $\mu_p$-action and therefore $V=\pi(U)$ is a smooth open neighborhood of $\pi(Z)$ in $Y$. Hence we may assume that $Y$ is also smooth. From~\ref{sec4-eq3} it follows that $D=ax_1\partial /\partial x_1$, $a\in \mathbb{Z}^{\ast}_p$. Let $\tilde{D}=x_1\partial /\partial x_1$. Then locally the quotient of $X$ by the $\mu_p$-action induced by $\tilde{D}$ is the same as the one induced by $D$. In fact, locally at $P\in X$, $\mathcal{O}_X=k[[x_1,x_2,\ldots,x_n]]$ and $\mathcal{O}_Y=k[[x_1^p,x_2,\ldots,x_n]]$.

\textbf{Claim:} For any $k \in \{1,2,\ldots, p-1\}$, the natural map
\begin{gather}\label{sec4-eq4}
\sigma_k \colon L_1(\tilde{D})^{\otimes k} \rightarrow L_{k}(\tilde{D})
\end{gather}
is surjective and therefore it is an isomorphism since both $L_1(\tilde{D})$ and $L_{k}(\tilde{D})$ are line bundles on $U$.

I proceed to prove the claim. Let $f \in L_k(\tilde{D})$. Then by the definition of $L_k(\tilde{D})$, \[
x_1\frac{\partial f}{\partial x_1}=kf.
\]
Now $f$ can be written as
\[
f(x_1,x_2,\ldots,x_n)=\sum_{d}g_d(x_2,\ldots,x_n)x_1^d.
\]
Then $f \in L_k(\tilde{D})$ if and only if $(d-k)f_d(x_2,\ldots,x_n)=0$, for any $d$, and hence $d=k +\nu p$, $\nu \in \mathbb{Z}$. Hence 
\[
f(x_1,x_2, \ldots, x_n)=\sum_{\nu}g_{\nu p +k}(x_2,\ldots,x_n)x_1^{\nu p +k}.
\]
Therefore, $L_k(\tilde{D})=\mathcal{O}_Y \cdot x_1^k$. Hence it now immediately follows that $\sigma_k$ in~\ref{sec4-eq4} is surjective. This concludes the proof of the claim.

Now observe that \[
L_d(\tilde{D})=L_{\overline{ad}}(D),\]
where $\overline{ad}=ad \mod p$. Hence~\ref{sec4-eq4} is equivalent to say that the natural map
\[
\sigma_k \colon L_a(D)^{\otimes k} \rightarrow L_{\overline{ak}}(D)
\]
is surjective at $P \in Z$ and hence an isomorphism for any $k \in \{1,2,\ldots, p-1\}$. Now the cokernel of $\sigma_k$ is supported on the divisorial part of the fixed locus of $D$. However the connected components of the divisorial part of the fixed locus of $D$ are smooth and irreducible and $\sigma_k$ is an isomorphism at $P \in Z$. Therefore $Z$ is not contained in the support of the cokernel of $\sigma_k$ and hence the restriction of $\sigma_k$ in $U$ is an isomorphism. Moreover, if $k$ runs over all members of $\{1,2,\ldots,p-1\}$, $\overline{ak}$ also runs over all members of $\{1,2,\ldots,p-1\}$. Hence since $\sigma_k$ is an isomorphism in $U$, it follows that
\[
\pi_{\ast}\mathcal{O}|_V = \oplus_{k=0}^{p-1}L_k(D)|_V=\oplus_{k=0}^{p-1}L_{\overline{ak}}(D)|_V=\oplus_{k=0}^{p-1}L_a(D)^{k}|_V,
\]
where $V=\pi(U)$. Take now $L=L_a(D)$ in the construction of diagram~\ref{sec4-diagram-1}. Then $\phi$ is an isomorphism in $U\supset Z$. Then by Theorem~\ref{sec3-th1}, 
\[
\omega_U=\pi^{\ast}(\omega_V \otimes L_a^{1-p}).
\]
Moreover, $\mathcal{O}_X(Z)=(\pi^{\ast}L^{-1}_a)^{[1]}
$.

Suppose that $X$ is projective over $k$. Then $Y$ is projective as well and therefore it has a dualizing sheaf $\omega_Y$. Let $Z$ be a connected component of the divisorial locus of $D$. Let $U_Z=X-\cup_{W\not= Z}$, where $W$ is a connected component of the fixed locus of $D$ different than $Z$, and let $V_Z=\pi(U_Z)$. Then from the previous discusion, there exists a $k_z \in \{1,2,\ldots,p-1\}$, such that \[
\omega_{U_Z}=\pi^{\ast}(\omega_{V_Z}\otimes L_{k_z}(D)^{1-p}).
\]
Let $M=\left(\otimes_Z L_{k_z}(D)\right)^{[1]}$, where $Z$ runs over all connected components of the divisorial part of the fixed locus of $D$. Then 
\[
\omega_Z =\left(\pi^{\ast}(\omega_Y \otimes M^{[1-p]})\right)^{[1]}.
\]

Finally suppose that $p=2$ or that $\mu_p$ acts freely on $X$. If $p=2$, then from Proposition~\ref{sec4-cor1},
\[
\pi_{\ast}\mathcal{O}_X=\mathcal{O}_Y \oplus L,
\]
where $L=L_1(D)$ and hence $X=Z$.

If $\mu_p$ acts freely on $X$ then $Y$ is smooth and $L_k(D)$ is invertible for all $k$. Moreover, by Corollary~\ref{sec4-cor1}, $L_k(D) \cong L_1(D)^{\otimes k}$, for any $k \in \{0,1,2, \ldots ,p-1\}$. Hence 
\[
\pi_{\ast}\mathcal{O}_X=\oplus_{k=0}^{p-1}L_k(D) =\oplus_{k=0}^{p-1} L_1(D)^{\otimes k}
\]
and hence $X=Z$ for $L=L_1(D)$.
\end{proof}
\begin{remark}
The third part of Theorem~\ref{sec4-th1} is not new. The statement about the free $\mu_p$ action is in~\cite[Page 125]{Mi80}, and the statement about the characteristic 2 case in~\cite{Ek86}. However the exposition here is completely self contained.
\end{remark}

\section{Quotients by $\alpha_p$-actions.}\label{alpha-p}
Let $X$ be a scheme defined over a field $k$ of characteristic $p>0$. Suppose that $X$ admits a nontrivial $\alpha_p$-action. By Proposition~\ref{sec1-prop1} this action is induced by a global vector field $D$ of $X$ such that $D^p=0$. Let $\pi \colon X \rightarrow Y$ be the quotient, which exists and is finite of degree $p$~\cite{Mu70}. The purpose of this section is to describe the structure of the quotient map $\pi$.

\begin{proposition}\label{section5-prop1}
Let $X$ be a normal scheme defined defined over a field $k$ of characteristic $p>0$. Suppose that $X$ admits a nontrivial $\alpha_p$-action induced by a nontrivial vector field $D$ of $X$ such that $D^p=0$. Then there exists a filtration of $\mathcal{O}_Y$-modules
\[
\mathcal{O}_Y=E_0\subsetneqq E_1 \subsetneqq E_2 \subsetneqq \cdots      \subsetneqq E_{k}  \subsetneqq E_{k+1}   \subsetneqq \cdots   \subsetneqq E_{p-2}   \subsetneqq E_{p-1} =\pi_{\ast}\mathcal{O}_X,
\]
where \[
E_k=\{a \in \mathcal{O}_X |\; D^{k+1}a=0\}
\]
is a reflexive sheaf of $\mathcal{O}_Y$-modules of rank $k+1$ and the quotients $L_k=E_k/E_{k-1}$ are torsion free sheaves of rank 1. Moreover, $E_k$ and $L_k$ are locally free outside the fixed locus of the $\alpha_p$ action, for all $k$. In addition,
\begin{enumerate}
\item The natural maps $\tau_k \colon E_1^{[k]}\rightarrow E_k$ defined by $\tau_k(a_1\otimes \cdots \otimes a_k)=a_1\cdots a_k$ induce $\mathcal{O}_Y$-module maps  \[
\sigma_k \colon L_1^{ [k]} \rightarrow L_k^{[1]}
\]
which are isomorphisms outside the fixed locus of the $\alpha_p$-action.
\item $H^0(Y,L_1^{[-1]})\not=0$.
\item The map $\lambda \colon E_1 \rightarrow \mathcal{O}_Y$ given by $\lambda(a)=a^p$ is an $F$-spliting of the exact sequence
\[
0 \rightarrow \mathcal{O}_Y \rightarrow E_1 \rightarrow L_1 \rightarrow 0.
\]

\end{enumerate}

\end{proposition}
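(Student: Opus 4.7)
The plan is to extract all the structure from the $\mathcal{O}_Y$-linear action of $D$ on $\pi_{\ast}\mathcal{O}_X$: the ranks come from a Jordan normal form computation at the generic point, reflexivity from $S_2$-arguments, and the local-freeness and isomorphism statements outside the fixed locus from the standard local coordinate in which $D$ becomes $\partial/\partial t$.

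\textbf{Filtration, ranks, and reflexivity.} Because $D(b)=0$ for $b\in\mathcal{O}_Y$, Leibniz gives $D^{k+1}(ba)=b\,D^{k+1}(a)$, so each $E_k=\ker D^{k+1}$ is an $\mathcal{O}_Y$-submodule of $\pi_{\ast}\mathcal{O}_X$; the inclusions are clear and $E_{p-1}=\pi_{\ast}\mathcal{O}_X$ since $D^p=0$. At the generic point of $Y$, $K(X)$ is a $K(Y)$-vector space of dimension $p$ by Proposition~\ref{general-structure-theory}.1, and the induced nilpotent operator $D_\eta$ has $1$-dimensional kernel $K(Y)$; any such operator on a $p$-dimensional space is a single Jordan block of size $p$, hence $\dim_{K(Y)}\ker D_\eta^{k+1}=k+1$. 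This forces the strict inclusions $E_{k-1}\subsetneqq E_k$ and pins the generic rank of $E_k$ at $k+1$, and in particular implies $L_k=E_k/E_{k-1}$ has generic rank $1$. Since $\pi$ is finite and $Y$ normal (Proposition~\ref{prop3}), $\pi_{\ast}\mathcal{O}_X$ is reflexive; $E_k$ is the kernel of the $\mathcal{O}_Y$-linear map $D^{k+1}\colon\pi_{\ast}\mathcal{O}_X\to\pi_{\ast}\mathcal{O}_X$, whose image is torsion-free (being a subsheaf of $\pi_{\ast}\mathcal{O}_X$). A local cohomology argument along $Z\subset Y$ of codimension $\geq 2$ then forces $\underline{H}^0_Z(E_k)=\underline{H}^1_Z(E_k)=0$, so $E_k$ is $S_2$ hence reflexive.

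\textbf{Local freeness outside the fixed locus and the structure of $L_k$.} For $a\in E_k$ one has $D(D^k a)=D^{k+1}a=0$, so $a\mapsto D^k a$ is an $\mathcal{O}_Y$-linear map $E_k\to E_0=\mathcal{O}_Y$ whose kernel is exactly $E_{k-1}$. This exhibits $L_k$ as a torsion-free rank-one subsheaf of $\mathcal{O}_Y$. Outside the fixed locus the ideal $D(\mathcal{O}_X)$ is the unit ideal, so some local section $t$ satisfies $Dt=1$; in characteristic $p$ this gives $D(t^p)=0$, so $t^p\in\mathcal{O}_Y$, and a rank comparison shows $\pi_{\ast}\mathcal{O}_X$ is locally free with basis $1,t,\ldots,t^{p-1}$. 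In this model $E_k=\bigoplus_{j\leq k}\mathcal{O}_Y\, t^j$ and $L_k=\mathcal{O}_Y\cdot\bar t^{\,k}$, which are locally free.

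\textbf{The maps $\tau_k$, $\sigma_k$, the section, and the $F$-splitting.} For $a_1,\ldots,a_k\in E_1$, every term of $D^{k+1}(a_1\cdots a_k)$ corresponds to a multi-index $(i_1,\ldots,i_k)$ with $\sum i_j=k+1$ and each $i_j\leq 1$, which is impossible; so multiplication defines $E_1^{\otimes k}\to E_k$, and reflexivity of $E_k$ produces $\tau_k\colon E_1^{[k]}\to E_k$. If any factor is replaced by an element of $\mathcal{O}_Y=E_0$, the product lies in $E_{k-1}$, so the composition $E_1^{\otimes k}\to E_k\to L_k$ factors through $L_1^{\otimes k}$; taking double duals yields $\sigma_k\colon L_1^{[k]}\to L_k^{[1]}$. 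In the local model above, $\sigma_k$ sends $t^{\otimes k}\mapsto t^k$, hence $\sigma_k$ is an isomorphism outside the fixed locus, proving (1). For (2), $D\colon E_1\to\mathcal{O}_Y$ vanishes on $E_0$ and, since $E_0\subsetneqq E_1$, descends to a nonzero $\mathcal{O}_Y$-linear map $L_1\to\mathcal{O}_Y$, i.e.\ a nonzero global section of $L_1^{[-1]}$. For (3), $\lambda(a)=a^p$ lies in $\mathcal{O}_Y$ because $D(a^p)=p\,a^{p-1}Da=0$; it is additive by the freshman's dream and satisfies $\lambda(ba)=b^p\lambda(a)$ for $b\in\mathcal{O}_Y$, so it is Frobenius-semilinear and restricts to Frobenius on $\mathcal{O}_Y$. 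By the characterisation used in Section~5, this is exactly the data of an $F$-splitting of $0\to\mathcal{O}_Y\to E_1\to L_1\to 0$.

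\textbf{Main obstacle.} The subtlest point is justifying the local model outside the fixed locus on a normal but possibly non-smooth $X$: one must produce a local $t$ with $Dt=1$ and deduce that $\pi_{\ast}\mathcal{O}_X$ is free on $1,t,\ldots,t^{p-1}$. Once that local normal form is in hand, all remaining statements reduce to the Leibniz rule, the Jordan-block computation at the generic point, and the double-dual formalism.
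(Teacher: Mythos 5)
Most of your outline reproduces the paper's structure, but there is a genuine gap at exactly the point you label the ``main obstacle,'' and that point is the technical heart of the proof. From the fact that the ideal $(D(\mathcal{O}_X))$ is the unit ideal off the fixed locus you write ``so some local section $t$ satisfies $Dt=1$.'' That inference is not valid: at a non-fixed point the hypothesis only gives some $a$ with $Da$ a unit $u$, and there is no formal manipulation turning this into an exact equation $Dt=1$ (note $D(a/u)\neq 1$ in general, and replacing $D$ by $u^{-1}D$ changes the operator and destroys $D^p=0$, since by Hochschild's formula $(u^{-1}D)^p=\bigl((u^{-1}D)^{p-1}(u^{-1})\bigr)D$ need not vanish). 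This is precisely where the paper does its real work: it seeks $z=b_1a+b_2a^2+\cdots+b_{p-1}a^{p-1}$ with $b_i\in B=A^D$, solves a triangular system for the $b_i$, and uses the Hochschild formula $(wD)^p=w^pD^p+(wD)^{p-1}(w)\cdot D$ together with Wilson's theorem to verify that the $b_i$ are indeed killed by $D$; only then does \cite[Theorem 27.3]{Ma86} give that $1,z,\ldots,z^{p-1}$ is a free $B$-basis of $A$. Since the local freeness of $E_k$ and $L_k$ off the fixed locus and the statement that the $\sigma_k$ are isomorphisms there all rest on this normal form, your proposal omits the key lemma rather than a routine verification. (Also, ``a rank comparison'' does not by itself give surjectivity of $B[1,t,\ldots,t^{p-1}]\to A$; one needs the downward induction using $D^{p-1},D^{p-2},\ldots$, i.e.\ essentially the cited theorem of Matsumura.)

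The remaining parts are correct and in places cleaner than the paper's treatment: your Jordan-block computation at the generic point (using $K(Y)=K(X)^D$ and $[K(X):K(Y)]=p$) gives the strict inclusions and the ranks in one stroke, whereas the paper argues strictness by contradiction via the impossibility of $D^{p-1}$ vanishing identically and gets the ranks from generic flatness plus the local model; and your local-cohomology argument that the kernel of an $\mathcal{O}_Y$-linear map from the $S_2$ sheaf $\pi_{\ast}\mathcal{O}_X$ to a torsion-free sheaf is $S_2$ replaces the paper's explicit extension-of-sections argument. The constructions of $\tau_k$, $\sigma_k$, the nonzero section of $L_1^{[-1]}$ via $a\mapsto Da$, and the Frobenius-semilinear map $\lambda(a)=a^p$ as the $F$-splitting coincide with the paper's. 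So the proposal would be acceptable once the existence of a local $z$ with $Dz=1$ at non-fixed points is actually proved.
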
 

\begin{proof}
Let $E_k=\{a \in \pi_{\ast}\mathcal{O}_X| \; D^{k+1}a=0 \}$. Then $E_k$ is clearly a sheaf of $\mathcal{O}_Y$-modules and  $E_k \subset E_{k+1}$. I will first show that $E_k\subsetneqq E_{k+1}$, for all $k=0,1,2,\ldots,p-2$. Suppose on the contrary that there is $k \in\{0,1,2,\ldots,p-2\}$ such that $E_k=E_{k+1}$. Let $a \in \pi_{\ast}\mathcal{O}_X$. Then $D^{p-k-2}a \in E_{k+1}$. But since $E_{k+1}=E_k$, it follows that $D^{p-k-2}a \in E_k$ and therefore $D^{k+1}(D^{p-k-2}a)=0$, and hence $D^{p-1}a=0$, for all $a \in \pi_{\ast}\mathcal{O}_X$. Hence $D^{p-1}$ is a derivation, which is impossible~\cite[Theorem 25.4]{Ma86}. Therefore $E_k \subsetneqq E_{k+1}$, for all $k$ and hence there exists a filtration
\begin{gather}\label{filtration}
\mathcal{O}_Y=E_0\subsetneqq E_1 \subsetneqq E_2 \subsetneqq \cdots      \subsetneqq E_{k}  \subsetneqq E_{k+1}   \subsetneqq \cdots   \subsetneqq E_{p-2}   \subsetneqq E_{p-1} =\pi_{\ast}\mathcal{O}_X,
\end{gather}
as claimed.

By generic flatness there exists a nonempty open subset $V \subset Y$ such that $\pi \colon \pi^{-1}(V) \rightarrow V$ is flat and finite of degree $p$. Then the restriction $\pi_{\ast}\mathcal{O}_X|_V$ is locally free of rank $p$. Then from~\ref{filtration} it follows that $E_k$ has rank $k+1$. Moreover, from the exact sequence
\[
0 \rightarrow E_{k-1} \rightarrow E_k \rightarrow L_k \rightarrow 0\]
it follows that $L_k$ has rank 1.

Next I will show that $E_k$ is a reflexive sheaf on $Y$. Since $X$ is normal, $Y$ is also normal and hence by~\cite{Ha94} it suffices to show that $E_k$ is $S_2$. This is a local result and therefore we can assume that $X$ and $Y$ are both affine.  Let $W \subset Y$ be an open set such that $\mathrm{codim}(Y-W,Y)\geq 2$. Let $a \in E_k(W)$. Then $a \in \mathcal{O}_X(\pi^{-1}(W))$ and $D^{k+1}a=0$. Since $X$ is normal and $\mathrm{codim}(X-\pi^{-1}(W),X )\geq 2$, the restriction map $i_W \colon H^0(X,\mathcal{O}_X) \rightarrow \mathcal{O}_X(\pi^{-1}(W))$ is an isomorphism. Hence there exists $b \in H^0(X,\mathcal{O}_X)$ such that $i_W(b)=a$. Then since $i_W(D^{k+1}b)=D^{k+1}a=0$, it follows that $D^{k+1}b=0$ and hence $b\in E_k(Y)$. This implies that the natural map $H^0(Y,E_k) \rightarrow H^0(W, E_k)$ is an isomorphism and hence $H^1_Z(Y,E_k)=0$, where $Z=Y-W$. Therefore $E_k$ is $S_2$ as claimed.

Let $\phi_k \colon E_k \rightarrow \mathcal{O}_Y$ be the map defined locally by $\phi_k(a)=D^{k-1}a$. Then $\mathrm{Ker}(\phi_k)=E_{k-1}$ and therefore there is an injection of $\mathcal{O}_Y$-modules \[
L_k=E_k/E_{k-1} \rightarrow \mathcal{O}_Y.
\]
Hence $L_k$ is torsion free for all $k$. Moreover this shows that
\[
H^0(Y,L_1^{[-1]})=\mathrm{Hom}_Y(L_1,\mathcal{O}_Y) \not=0,
\]
as claimed in the second part of the theorem.

Let $\tau_k \colon E_1^{\otimes k} \rightarrow \pi_{\ast}\mathcal{O}_X$ be the map defined locally by $\tau_k(a_1 \otimes \cdots \otimes a_k)=a_1\cdots a_k$. Then an easy calculation shows that $D^k(a_1\cdots a_k)=0$ and hence $\tau_k$ induces a map $E_1^{\otimes k}\rightarrow E_k$ which induces a map $L^{\otimes k}_1 \rightarrow L_k$. Taking double duals we get a map \[
\tau_k \colon L_1^{[k]} \rightarrow L_k^{[1]}.
\]
Next I will show that the support of the kernel and cokernel of $\tau_k$ are in the fixed locus of the $\alpha_p$-action. Equivalently that $\tau_k$ is an isomorphism at any point not fixed by the $\alpha_p$-action. This a local result at points not fixed by $D$. Hence, by considering Proposition~\ref{subsec2-prop3}, we may assume that $X =\mathrm{Spec} A$, where $(A,m_A)$ is a local ring. Then $Y = \mathrm{Spec} B$, where $B=A^D$. Moreover, since the $\alpha_P$-action induced by $D$ is free, $D(A) \not\subset m_A$.

\textbf{Claim:} There exists $z \in A$ such that $Dz=1$.

Since $D(A)\not\subset m_A$, there exists $a\in A$ such that $Da \not\in m_A$ and hence $Da \in A^{\ast}$ is a unit. I will show that there exist $b_i \in B$, $i=1,\ldots,p-1$, such that 
\begin{gather}\label{Dz}
z=b_1a+b_2a^2+\cdots +b_{p-1}a^{p-1}
\end{gather}
has the property that $Dz=1$. By applying $D$ successively $p-1$ times in~\ref{Dz} we get that the existence of such $z$ is equivalent to the existence of a solution of the system of equations
\begin{gather}\label{existence-of-z}
\sum_{k=\nu}^{p-1}k(k-1)\cdots(k-\nu+1)b_ka^{k-\nu}=c_{\nu},
\end{gather}
$\nu=1,2,\ldots,p-1$, and the elements $c_{\nu}\in A$ are defined inductively from the relations $c_{\nu+1}=\frac{1}{Da}D(c_{\nu})$, $c_1=\frac{1}{Da}$. In fact,
\[
c_{\nu}=\left(\frac{1}{Da}D\right)^{\nu -1}\left(\frac{1}{Da}\right).
\]
Setting $\nu=p-1$ and by considering that by Wilson's theorem $(p-1)!=-1 \mod p$, we get that 
\begin{gather}\label{b}
b_{p-1}=-c_{p-1}=-\left(\frac{1}{Da}D\right)^{p-2}\left(\frac{1}{Da}\right).
\end{gather}
Now it is possible to inductively solve~\ref{existence-of-z} and find a solution $b_1,\ldots,b_{p-1}$. It remains to show that $b_i \in B$, for all $i$. I will show that $b_{p-1}\in B$. The rest follow by inverse induction.

Now by the Hochschild formula~\cite[Theorem 25.5]{Ma86}, for any $w \in A$,
\[
\left(wD\right)^p=w^pD^p+(wD)^{p-1}(w)\cdot D.
\]
Now from~\ref{b} and evaluating the Hochschield at $a$ for $w=1/Da$ and taking into consideration that $D^p=0$, we get that
\begin{gather*}
D(b_{p-1})=-D\left(\frac{1}{Da}D\right)^{p-2}\left(\frac{1}{Da}\right)=-Da \cdot \frac{1}{Da}D\left(\frac{1}{Da}D\right)^{p-2}\left(\frac{1}{Da}\right)=\\
=-Da\left(\frac{1}{Da}D\right)^{p-1}\left(\frac{1}{Da}\right)=-\left(\frac{1}{Da}D\right)^p(a)=\\
\left(\frac{1}{Da}D\right)^{p-2}\left(\frac{1}{Da}D\left(\frac{1}{Da}Da\right)\right)=0,
\end{gather*}
and therefore $b_{p-1}\in B$. Then by induction and using~\ref{existence-of-z} we get that $b_i \in B$ for all $i$. This completes the proof of the claim and hence there exists $z \in A$ such that $Dz=1$. Then by~\cite[Theorem 27.3]{Ma86}, the elements $z^k$, $k=0,1,\ldots, p-1$ form a basis for $A$ as a free $B$-module. Then it is easy to see that
\[
E_k=\{b_0+b_1z+\cdots b_k z^k|\; b_i \in B\}.
\]
Hence $E_k$ is free of rank $k+1$ away from the fixed locus of the $\alpha_p$ action, obtaining another proof that $E_k$ has rank $k+1$. Moreover, $L_k$ is a free $B$-module of rank 1 generated by $z^k$. But now it is clear that the map $\tau_k \colon L_1^k \rightarrow L_k$ is an isomorphism for all $k$.

Finally it is easy to verify that the map $\lambda \colon E_1 \rightarrow \mathcal{O}_Y$ given by $\lambda(a)=a^p$ is an $F$-splitting of the exact sequence
\[
0 \rightarrow \mathcal{O}_Y \rightarrow E_1 \rightarrow L_1 \rightarrow 0,
\]
where $F\colon X \rightarrow X$ is the absolute Frobenious map.

\end{proof}

\begin{remark}
The maps $\tau_k \colon L_1^{[k]} \rightarrow L_k^{[1]}$ are not necessarily surjective at the fixed points of the $\alpha_p$-action. For example, let $B=k[x,y]$ and $A=B[t]/(t^3-x)$, where $k$ is a field of characteristic 3. Let $D=t^2\frac{d}{dt}$. Then $D^3=0$ and $B=A^D$. An easy calculation shows that $E_1(D)=\{b_0+b_2t^2|\; b_0,b_1 \in B\}$ and $E_2=A$. Then $L_1=E_1(D)/B$ is a free $B$-module generated by $t^2$ and $L_2=E_2/E_1=A/E_1$ is a free $B$-module generated by $t$. Then the image of the map
\[
\tau_2 \colon L_1\otimes L_1 \rightarrow L_2\]
is the $B$-submodule of $L_2$ generated by $t^4=xt$. Hence $\tau_2$ is not surjective in this case.

\end{remark}

\begin{remark}
Proposition~\ref{section5-prop1} gives an elementary self contained proof of the structure theorem of $\alpha_L$-torsors given in Theorem~\ref{sec3-th1}.
\end{remark}

The next propositions show how to reconstruct $X$ from the data given in Proposition~\ref{section5-prop1}.

\begin{proposition}\label{section5-prop2}
Let $Y$ be an integral normal scheme of finite type over a perfect field $k$ of characteristic $p>0$. Let 
\begin{gather}\label{ex-sequence-sec5-prop2}
0 \rightarrow \mathcal{O}_Y \stackrel{i}{\rightarrow} E \rightarrow L \rightarrow 0
\end{gather}
be a short exact sequence of $\mathcal{O}_Y$-modules such that 
\begin{enumerate}
\item $E$ is reflexive of rank 2 and $L$ is torsion free of rank 1. 
\item There exists an $F$-splitting $\lambda \colon E \rightarrow \mathcal{O}_Y$ of the exact sequence~\ref{ex-sequence-sec5-prop2}.
\item There exists a nonzero section $s \in H^0(Y,L^{[-1]})$. 
\end{enumerate}
Let $\phi \colon S(E) \rightarrow \mathcal{O}_Y$ be the map defined locally by $\phi(a)=a^p$, if $a\in S^0(E)=\mathcal{O}_Y$, and $\phi(x_1\cdots x_n)=\lambda(x_1)\cdots \lambda(x_n)$, if $x_1\cdots x_n \in S^n(E)$, $n \geq 1$. Let \[
R(E,\lambda)=(S(E)/I)^{[1]}
\]
where $I \subset S(E)$ be the ideal sheaf generated locally by $a-i(a), a\in \mathcal{O}_Y$, and $\mathrm{Ker}(\phi)$. Then the map
\[
\pi \colon X=\mathrm{Spec}\left( R(E,\lambda)\right) \rightarrow Y
\]
is a finite purely inseparable map of degree $p$ and is an $\alpha_L$-torsor over a codimension 2 open subset of $Y$. Moreover, $X$ admits an $\alpha_p$-action and $Y$ is the quotient of $X$ by this action.
\end{proposition}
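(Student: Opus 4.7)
The plan is to reduce to Theorem~\ref{sec3-th1} on a large open subset of $Y$ and then extend across codimension two using the reflexivity built into $R(E,\lambda) = (S(E)/I)^{[1]}$.

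First, I would let $V \subset Y$ be the open subset on which both $E$ and $L^{[1]}$ are locally free. Since $Y$ is normal and both $E$ and $L^{[1]}$ are reflexive, $\mathrm{codim}(Y \setminus V, Y) \geq 2$. Over $V$ the exact sequence~\ref{ex-sequence-sec5-prop2} together with the $F$-splitting $\lambda$ is an $F$-split extension of rank-two vector bundles exactly as in Theorem~\ref{sec3-th1}.1, and the local construction $S(E|_V)/I|_V$ coincides with the construction $R(E,\sigma)$ used there to classify $\alpha_L$-torsors. Hence $\pi_V \colon X_V := \mathrm{Spec}_V(S(E|_V)/I|_V) \to V$ is an $\alpha_L$-torsor over $V$: in particular finite, faithfully flat, purely inseparable of degree $p$, and locally of the form $\mathrm{Spec}(\mathcal{O}_V[t]/(t^p - c))$. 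Because $H^0(Y, L^{[-1]}) \neq 0$, Theorem~\ref{sec3-th1}.4 then furnishes a nontrivial $\alpha_p$-action on $X_V$ with $\pi_V$ as its quotient, corresponding by Proposition~\ref{sec1-prop1} to a derivation $D_V$ on $\mathcal{O}_{X_V}$ with $D_V^p = 0$.

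Next, the double dual $R(E,\lambda)$ is reflexive and hence $S_2$, and it agrees on $V$ with the locally free rank-$p$ sheaf $S(E|_V)/I|_V$. Consequently $R(E,\lambda)$ is a coherent $\mathcal{O}_Y$-module of generic rank $p$, so $\pi \colon X \to Y$ is a finite morphism of degree $p$; pure inseparability is inherited from $\pi_V$ through the integral function field extension $K(X)/K(Y)$, and $X$ is $S_2$ because $\pi_*\mathcal{O}_X$ is $S_2$ as an $\mathcal{O}_Y$-module and $\pi$ is finite.

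Finally, I would extend the $\alpha_p$-action globally and identify $Y$ as the quotient. The derivation $D_V$ is an $\mathcal{O}_V$-linear endomorphism of the reflexive sheaf $\pi_{V*}\mathcal{O}_{X_V}$, so by the Hartogs-type extension available for $S_2$ sheaves on the normal scheme $Y$ (with $\mathrm{codim}(Y \setminus V, Y) \geq 2$), $D_V$ extends uniquely to a $k$-derivation $D$ on $\mathcal{O}_X$; the identity $D^p = 0$ holds on $V$ and therefore everywhere by uniqueness of such extensions. Proposition~\ref{sec1-prop1} then produces the required $\alpha_p$-action on $X$. For the quotient statement, both $\mathcal{O}_Y$ and $\ker(D \colon \pi_*\mathcal{O}_X \to \pi_*\mathcal{O}_X)$ are $S_2$ $\mathcal{O}_Y$-subsheaves of $\pi_*\mathcal{O}_X$ that coincide on $V$ by the first step, and hence coincide on all of $Y$. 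The conceptual content is concentrated in the first paragraph, where the construction of Proposition~\ref{section5-prop2} is recognized as the $S_2$-ification of the local model already classified by Theorem~\ref{sec3-th1}; the main technical hurdle is the suite of codimension-two extension arguments in the last paragraph, which go through precisely because $R(E,\lambda)$ was designed as a double dual to be reflexive and $Y$ is normal.
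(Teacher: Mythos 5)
Your proposal is correct and follows essentially the same route as the paper: restrict to a large open subset (complement of codimension $\geq 2$) where $E$ and $L$ are locally free, identify the construction there with the $\alpha_L$-torsor of Theorem~\ref{sec3-th1} and invoke part 4 of that theorem to get the $\alpha_p$-action with quotient $Y$, then extend everything across codimension two using the reflexivity of $R(E,\lambda)$. You merely spell out in more detail the extension of the derivation and the identification $\mathcal{O}_Y=\ker(D)$, which the paper's proof leaves terse, so there is nothing substantive to add.
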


\begin{proof}
Let $Sing(Y)$ be the singular part of $Y$. Since $k$ is perfect, $Sing(Y)$ is a closed subset of $Y$ of codimension $\geq 2$. Let $Z \subset Y$ be the subset of $Y$ where $L$ is not free. Since $L$ is torsion free and $Y$ normal, $Z$ is closed of codimension $\geq 2$. Let $U=Y-(Sing(Y)\cup Z)$. Then $U$ is open and of codimension $\geq 2$. Moreover the restriction to $U$ of $E$ and $L$ are locally free. Now since $X$ is by construction a scheme satisfying the $S_2$ condition, all the statements of the proposition can be checked over $U$. In particular, if there is an $\alpha_p$-action on $\pi^{-1}(U)$ whose quotient is $U$, this action is induced by a vector field $D_u$ on $\pi^{-1}(U)$ such that $D^p_u=0$. Then since $\pi^{-1}(U)$ is of codimension $\geq 2$ in $X$, $D_u$ extends to a vector field $D$ on $X$ such that $D^p=0$. This vector field induces an $\alpha_p$-action on $X$ that extends the one on $\pi^{-1}(U)$. 

Hence in order to prove the proposition we may assume that $Y$ is smooth and $E$ and $L$ locally free. But then the construction of $\pi \colon X \rightarrow Y$ explained in the proposition is the $\alpha_{L^{-1}}$-torsor over $Y$ defined by the exact sequence~\ref{ex-sequence-sec5-prop2}. The explicit structure of such a torsor was described in Proposition~\ref{sec3-th1} and the proposition follows from this.
\end{proof}

\begin{theorem}\label{sec5-th1}
Let $X$ be an normal integral scheme of finite type over a perfect field $k$ of characteristic $p>0$. Suppose that $X$ admits a nontrivial $\alpha_p$-action. Let $\pi \colon X \rightarrow Y$ be the quotient. Then there exists a factorization
\[
\xymatrix{
X \ar[r]^{\phi}\ar[d]_{\pi} & Z \ar[dl]^{\delta} \\
Y   & \\
}
\]
such that 
\begin{enumerate}
\item \[
\delta \colon Z =\mathrm{Spec}_Y \left(R(E,\lambda)\right) \rightarrow Y
\]
is the purely inseparable map of degree $p$ defined by an $F$-split short exact sequence of $\mathcal{O}_Y$-modules
\[
0 \rightarrow \mathcal{O}_Y \stackrel{i}{\rightarrow} E \rightarrow L \rightarrow 0
\]
where  $E$ is reflexive of rank 2, $L$ torsion free of rank 1, $\lambda \colon E \rightarrow \mathcal{O}_Y$ an $F$-splitting of the sequence and $H^0(Y,L^{[-1]})\not= 0$.
\item $\phi \colon X \rightarrow Z$ is the normalization of $Z$. 
\item If $p=2$ or $\alpha_p$ acts freely on $X$, then $\phi$ is an isomorphism.
\end{enumerate}

\end{theorem}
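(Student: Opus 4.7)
The plan is to mirror the proof of Theorem~\ref{sec4-th1}, using Proposition~\ref{section5-prop1} to extract an $F$-split exact sequence from the $\alpha_p$-action and Proposition~\ref{section5-prop2} to build a degree-$p$ cover $Z \to Y$ from this data. Applying Proposition~\ref{section5-prop1} yields the short exact sequence
\[
0 \to \mathcal{O}_Y \to E_1 \to L_1 \to 0
\]
with $E_1$ reflexive of rank $2$, $L_1$ torsion-free of rank $1$, the $F$-splitting $\lambda(a) = a^p$, and a nonzero section of $L_1^{[-1]}$ obtained from the map $a \mapsto Da$. Feeding $(E_1, L_1, \lambda)$ into Proposition~\ref{section5-prop2} produces $\delta \colon Z = \mathrm{Spec}_Y R(E_1, \lambda) \to Y$ satisfying part (1).

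Next I would construct $\phi \colon X \to Z$ locally and then glue. Let $U \subset Y$ be the open locus where $Y$ is smooth and $E_1, L_1$ are locally free; reflexivity together with perfectness of $k$ force $\mathrm{codim}(Y - U, Y) \geq 2$. Over a small affine $U_i \subset U$, pick a local lift $e \in E_1(U_i)$ of a generator of $L_1|_{U_i}$. Since $D(e^p) = p e^{p-1} De = 0$ in characteristic $p$, we have $c := e^p \in \mathcal{O}_{U_i}$, and Theorem~\ref{sec3-th1}.2 identifies $\delta^{-1}(U_i) \cong \mathrm{Spec}(\mathcal{O}_{U_i}[t]/(t^p - c))$. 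The assignment $t \mapsto e$ sends $t^p - c$ to $e^p - c = 0$, giving a canonical $\mathcal{O}_{U_i}$-algebra map $\delta_*\mathcal{O}_Z|_{U_i} \to \pi_*\mathcal{O}_X|_{U_i}$. The transformation rules of Theorem~\ref{sec3-th1}.2 show these local maps glue over $U$, and since $\pi_*\mathcal{O}_X$ and $\delta_*\mathcal{O}_Z = R(E_1, \lambda)$ are both reflexive on $Y$, the map extends uniquely to a global $\mathcal{O}_Y$-algebra map which after $\mathrm{Spec}$ yields $\phi \colon X \to Z$ with $\pi = \delta \circ \phi$. Because $\pi$ and $\delta$ both have degree $p$, $\phi$ is birational; being finite, birational, and with normal source, $\phi$ exhibits $X$ as the normalization of $Z$, giving part (2).

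For part (3), when $p = 2$ the filtration of Proposition~\ref{section5-prop1} degenerates to $\mathcal{O}_Y \subsetneq E_1 = \pi_*\mathcal{O}_X$, and the local map $\mathcal{O}_{U_i}[t]/(t^2 - e^2) \to \mathcal{O}_{U_i} \oplus \mathcal{O}_{U_i} e = E_1|_{U_i}$ sending $t \mapsto e$ is visibly an isomorphism, so $\phi$ is an isomorphism on $U$ and hence globally by reflexivity. When $\alpha_p$ acts freely on $X$, the free-case analysis in the proof of Proposition~\ref{section5-prop1} produces, locally on $Y$, an element $z \in \pi_*\mathcal{O}_X$ with $Dz = 1$, and identifies $\pi_*\mathcal{O}_X$ locally with $\mathcal{O}_Y[z]/(z^p - z_0)$ where $z_0 := z^p \in \mathcal{O}_Y$; this matches the local $\alpha_L$-torsor form $\mathcal{O}_Y[t]/(t^p - z_0)$ exactly, so $\phi$ is an isomorphism locally, hence globally. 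The main obstacle will be the construction of $\phi$ in the second paragraph: because the structure map featuring in Proposition~\ref{section5-prop2} is only $F$-semilinear, describing the ideal it generates in $S(E_1)$ directly is awkward, so the strategy is to sidestep this difficulty by passing to the codimension-$2$ open $U$, invoking the concrete local form of Theorem~\ref{sec3-th1}.2, and extending back to $Y$ by reflexivity.
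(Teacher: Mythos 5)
Your proposal is correct, and its skeleton coincides with the paper's: extract the $F$-split sequence $0 \to \mathcal{O}_Y \to E_1 \to L_1 \to 0$ with $\lambda(a)=a^p$ and $H^0(Y,L_1^{[-1]})\neq 0$ from Proposition~\ref{section5-prop1}, build $\delta\colon Z=\mathrm{Spec}_Y R(E_1,\lambda)\to Y$ via Proposition~\ref{section5-prop2}, get part (2) from the degree count plus normality of $X$, and treat $p=2$ and the free case exactly as in the paper (filtration collapse, respectively the element $z$ with $Dz=1$ and the basis $1,z,\dots,z^{p-1}$). The one place where you genuinely diverge is the construction of $\phi$: the paper defines a single global map $\Psi\colon S(E_1)\to \pi_{\ast}\mathcal{O}_X$ by $x_1\cdots x_m\mapsto x_1\cdots x_m$ and checks directly that the ideal $J+\mathrm{Ker}(\Phi)$ dies under $\Psi$, the key point being that $\Phi(w)=0$ forces $\bigl(\Psi(w)\bigr)^p=0$ since $\lambda(x)=x^p$ in $\pi_{\ast}\mathcal{O}_X$, whence $\Psi(w)=0$ by reducedness of $X$; you instead work over the codimension-$2$-complement open $U$, use the chart description of Theorem~\ref{sec3-th1}.2 with $c_i=\lambda(e_i)=e_i^p$, send $t\mapsto e_i$, check compatibility with the transition functions $t\mapsto \gamma_{ij}+a_{ij}t$ (which works because these are exactly the transition data of $E_1$ in the bases $(1,e_i)$, and $e_j=\gamma_{ij}+a_{ij}e_i$ in $\pi_{\ast}\mathcal{O}_X$), and then extend to all of $Y$ by reflexivity of $R(E_1,\lambda)$ and $\pi_{\ast}\mathcal{O}_X$. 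Both routes are sound: the paper's global $\Psi$ avoids all gluing and extension bookkeeping but must confront the $F$-semilinear ideal head-on via the $p$-th power trick, while your chart-by-chart construction sidesteps that computation at the cost of the gluing check and the codimension-$2$ extension step (which is legitimate here since $Y$ is normal and both sheaves are reflexive). Your treatment of $p=2$ is in fact slightly more careful than the paper's one-line ``$E=\pi_{\ast}\mathcal{O}_X$ hence $\phi$ is an isomorphism,'' since you verify the local isomorphism over $U$ and then invoke reflexivity.
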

\begin{proof}
By Proposition~\ref{sec1-prop1}, the $\alpha_p$ action on $X$ is induced by a nontrivial vector field $D$ on $X$ such that $D^p=0$. Then by Proposition~\ref{section5-prop1}, there exists an $F$-split exact sequence
\[
0 \rightarrow \mathcal{O}_Y \stackrel{i}{\rightarrow} E \rightarrow L \rightarrow 0,\]
where $E=E_1(D)\subset \pi_{\ast}\mathcal{O}_X$ is a rank 2 reflexive sheaf on $Y$ and $L=E/\mathcal{O}_Y$ is a rank 1 torsion free sheaf on $Y$. Moreover, $H^0(Y,L^{[-1]}) \not=0$. Let $\lambda \colon E \rightarrow \mathcal{O}_Y$ be an $F$-splitting of the sequence. Then let $R(E,\lambda)$ be the sheaf of $\mathcal{O}_Y$-algebras as constructed in Proposition~\ref{section5-prop2} and \[
\delta \colon Z=\mathrm{Spec}_Y\left( R(E,\lambda)\right) \rightarrow Y
\]
the purely inseparable map of degree $p$ defined by $R(E,\lambda)$. Let \[
\Psi \colon S(E) \rightarrow \pi_{\ast}\mathcal{O}_X
\]
be the map of $\mathcal{O}_Y$-algebras defined by the maps
\[
\Psi_m \colon S^m(E)\rightarrow \pi_{\ast}\mathcal{O}_X\]
given locally by $\Psi_m(x_1\cdots x_m)=x_1 \cdots x_m$, for any $m \geq 0$. Then clearly $\Psi(a-i(a))=0$, for any $a \in \mathcal{O}_Y$ and therefore the ideal $(a-i(a)|\; a\in \mathcal{O}_Y) \subset \mathrm{Ker}(\Psi)$. Let $\Phi \colon S(E) \rightarrow \mathcal{O}_Y$ be the map defined locally by $\Phi(a)=a^p$, for $a \in S^0(E)=\mathcal{O}_Y$ and on $S^n(E)$ by $\Phi(x_1x_2\cdots x_n)=\lambda(x_1)\lambda(x_2)\cdots \lambda(x_n)$, $m\geq 1$. Suppose that $w=\sum_ia_ix_{1,i}\cdots x_{n_i,i} \in S(E)$ such that $\Phi(w)=0$. Then by the definition of $\lambda$ and $\Phi$ this implies that \[
0=\sum_ia_i^p\lambda(x_{1,i})\cdots \lambda(x_{n_i,i})=\left(\sum_ia_ix_{1,i}\cdots x_{n_i,i}\right)^p.\]
Therefore $\sum_ia_ix_{1,i}\cdots x_{n_i,i}=0$ and hence $w\in \mathrm{Ker}(\Psi)$. Hence there is a map of $\mathcal{O}_Y$-algebras
\begin{gather}\label{psi-map}
\Psi \colon R(E,\lambda) \rightarrow \pi_{\ast}\mathcal{O}_X. 
\end{gather}
Hence there exists a factorization \[
\xymatrix{
X \ar[r]^{\phi}\ar[d]_{\pi} & Z \ar[dl]^{\delta} \\
Y   & \\
}
\]
as claimed in the theorem. Since both $\pi$ and $\delta$ are finite of degree $p$, it follows that $\phi$ has degree zero. Hence since $X$ is normal, $\phi\colon X \rightarrow Z$ is the normalization of $Z$. 

Suppose that $p=2$. Then $E=\pi_{\ast}\mathcal{O}_X$ and hence $\phi$ is an isomorphism. 

Suppose that $\alpha_p$ acts freely on $X$. I will show that in this case the map $\Psi$ in ~\ref{psi-map} is an isomorphism. This is a local result. Hence we can assume that $X =\mathrm{Spec} A$ and $Y = \mathrm{Spec} B$, where $B=A^D$. Then $A$ is a free $B$-module of rank $p$ and moreover, from the proof of Proposition~\ref{section5-prop1} it follows that there exists a $z\in A$  such that $Dz=1$ and $1,z,\ldots,z^{p-1}$ generate $A$ as a free $B$-module. Then \[
E=E_1=\{a\in A|\; D^2a=0\}=\{b_0+b_1z|\; b_0,b_1\in B\}.
\]
Then $\lambda \colon E \rightarrow B$ is defined by $\lambda(b_0+b_1z)=b_0^p+b_1^pz^p$.  It is now easy to see that \[
R(E,\lambda)\cong \frac{B[u]}{(u^p-z^p)}\]
and that the map $\Psi \colon R(E,\lambda) \rightarrow A$ is defined by $\Psi(f(u))=f(z)$. But since $A$ is generated by $1,z,\ldots,z^{p-1}$, this is an isomorphism. Hence $\Psi$ is an isomorphism and therefore $\phi$ is as well, as claimed in the theorem.
\end{proof}

\section{Adjunction formulas.}
Let $X$ be a scheme defined over an algebraically closed field $k$ of characteristic $p>0$. Suppose that $X$ admits either a $\mu_p$ or an $\alpha_p$ action and let $\pi \colon X \rightarrow Y$ be the quotient. The purpose of this section is to obtain adjunction formulas for $\pi$. If $X$ is normal then adjunction formulas are provided by Proposition~\ref{general-structure-theory} and Theorems~\ref{sec4-th1},~\ref{sec5-th1}. The purpose of this section is to give adjunction formulas in the case when $X$ is not necessarily normal. One possible approach to the non-normal case would be to take the normalization $\nu \colon \tilde{X} \rightarrow X$ of $X$, lift the $\alpha_p$ or $\mu_p$ action on $\tilde{X}$ and then take the quotient again. This is equivalent to lifting the derivation inducing the action on the normalization of $X$. However this is not always possible for the reason that unlike the characteristic zero case where derivations lift to the normalization~\cite{Sei66}, this is not true anymore in positive characteristic.

The next theorem gives an adjunction formula in the case when $X$ has at worst normal crossing singularities in codimension one. This case is sufficient as far as the problem of compactifying the moduli space of canonically polarized surfaces is concerned, the main motivation of this work.
   
\begin{theorem}\label{adjunction}
Let $X$ be an integral scheme defined over an algebraically closed field $k$ of characteristic $p>0$. Suppose that $X$ has a dualizing sheaf $\omega_X$, satisfies Serre's condition $S_2$ and that it has at worst normal crossing singularities in codimension one. Suppose that $X$ admits either a $\mu_p$ or an $\alpha_p$ action and let $\pi \colon X \rightarrow Y$ be the quotient of $X$ by the action. Then $Y$ has a dualizing sheaf $\omega_Y$ and \[
\omega_X=\left(\pi^{\ast}\omega_Y \otimes I_{\mathrm{fix}}^{[1-p]}\right)^{[1]},
\]
where $I_{\mathrm{fix}}$ is the ideal sheaf of the fixed locus of the action.
\end{theorem}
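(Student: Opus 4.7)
My plan is to reduce the identity to a codimension one check and then treat separately the two possible local types of $X$ allowed by the hypothesis. Both sides of the claimed equality are reflexive rank one $\mathcal{O}_X$-modules --- $\omega_X$ by the $S_2$ plus dualizing assumption, and the right hand side by the outer $(-)^{[1]}$. Thus the isomorphism only needs to be established after localizing and completing at every generic point of a prime divisor of $X$. By the normal crossing hypothesis, at every such point $\hat{\mathcal{O}}_{X,P}$ is either a regular local ring or isomorphic to $L[[x_1,x_2]]/(x_1x_2)$ with $L = \kappa(P)$.

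When $\hat{\mathcal{O}}_{X,P}$ is regular I would invoke the classical Rudakov--Shafarevich formula. Writing $D = z D'$ with $z$ a local equation for the divisorial zero locus $\Delta$ of $D$ and $D'$ nowhere vanishing in codimension one, the ideal $I_{\mathrm{fix}}$ coincides with $(z)$ in codimension one, so $I_{\mathrm{fix}}^{[1-p]} = \mathcal{O}_X((p-1)\Delta)$, and the claimed identity becomes $\omega_X = \pi^{\ast}\omega_Y \otimes \mathcal{O}_X((p-1)\Delta)$, which is the main formula of~\cite{R-S76} and is recorded in Proposition~\ref{general-structure-theory}.4.

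The nodal case is where the actual work lies. The key observation is that any derivation $\hat{D}$ of $L[[x_1,x_2]]/(x_1x_2)$ must send $x_i$ into the ideal $(x_i)$, and therefore preserves both minimal primes and lifts canonically to a derivation $\tilde{D}$ of the normalization $\tilde{A} = L[[x_1]] \times L[[x_2]]$ with the analogous condition $\tilde{D}^p = 0$ or $\tilde{D}^p = \tilde{D}$ on each factor. I would then apply the smooth case to each branch to obtain an adjunction formula on $\tilde{A}$, and descend it to $A = \hat{\mathcal{O}}_{X,P}$ by means of the residue exact sequence $0 \to \omega_A \to \nu_{\ast}\omega_{\tilde{A}}(C) \to \text{(skyscraper)} \to 0$ controlling the conductor $C$, together with its counterpart on the quotient side. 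The main obstacle is a precise residue-theoretic matching showing that the conductor contributions on $X$ and on $Y$ cancel upon passage to the reflexive hull, so that only the $(p-1)$-fold twist by $I_{\mathrm{fix}}$ survives. The ideal $I_{\mathrm{fix}}$ on the nodal ring is generated by $\hat{D}(x_1)$ and $\hat{D}(x_2)$ and so, through the factorizations $\hat{D}(x_i) = u_i(x_i)x_i$, encodes the combined divisorial zero loci of $\tilde{D}$ on the two branches, which is exactly the combination needed to produce the correct reflexive twist after the cancellation of conductors.
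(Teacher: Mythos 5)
Your observation that at a codimension-one node the derivation satisfies $\hat D(x_i)\in(x_i)$, hence preserves the two branches and lifts to the formal local normalization, is correct and nicely sidesteps the global non-liftability of derivations to normalizations that blocks the naive approach. But the heart of the theorem is exactly the step you defer. At a regular codimension-one point the formula is, as you say, just the Rudakov--Shafarevich formula \cite{R-S76}; the only new content of the theorem is what happens at the normal crossing points, and there your argument stops at the announcement that ``the conductor contributions on $X$ and on $Y$ cancel upon passage to the reflexive hull.'' This cancellation is precisely where all the work lies. The paper proceeds differently: after reducing to the Gorenstein situation it writes $\omega_X=\pi^{\ast}\omega_Y\otimes\pi^{!}\mathcal{O}_Y$, uses the globally defined section $f_D=D^{p-1}$ of $\pi^{!}\mathcal{O}_Y$ to get an injection $\sigma\colon(\pi^{!}\mathcal{O}_Y)^{\ast}\to\mathcal{O}_X$, and then identifies $\mathrm{Im}(\sigma)$ with $I_{\mathrm{fix}}^{[p-1]}$ by explicit generators; at a node one finds $I_{\mathrm{fix}}^{p-1}=J\cdot(\pi^{!}\mathcal{O}_Y)^{\ast}$ with $J=(xy,x^{p-1},y^{p-1})$, and equality holds only because $J^{\ast\ast}=A$. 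Your residue-theoretic matching would have to reproduce exactly this phenomenon (including the asymmetric cases where one branch of the derivation vanishes), and until it is written out the nodal case --- i.e.\ the theorem itself --- is not proved.

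Two further gaps. First, a family of abstract isomorphisms of completions at codimension-one points does not yield an isomorphism of reflexive sheaves; you need one globally defined comparison map whose behavior you then check in codimension one (the paper's $\sigma$, or in your setup the natural map $\pi^{\ast}\omega_Y\to\omega_X$ coming from duality for the finite map --- you never fix such a map). Relatedly, the statement also asserts the existence of $\omega_Y$, which you do not address (the paper obtains it from the factorization of Frobenius through $Y$), and your conductor bookkeeping presupposes knowing the codimension-one structure of $Y$ (again normal crossings or smooth), which itself requires the computation carried out in Step 1 of the paper's proof. Second, your local normal form at a node is incomplete: besides $\hat D=xf(x)\partial/\partial x+yg(y)\partial/\partial y$ there is the residual case $\hat D=f(x,y)D_L$ with $D_L$ a derivation of the coefficient field $L$, where $\hat D(x)=\hat D(y)=0$ but $I_{\mathrm{fix}}=(f)\neq 0$; so $I_{\mathrm{fix}}$ is not generated by $\hat D(x_1),\hat D(x_2)$ there, and your branch-wise use of \cite{R-S76} must also cover this residually inseparable situation. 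The normalization-plus-conductor route is genuinely different from the paper's $\pi^{!}$-computation and may well be viable, but as it stands it is an outline with its decisive step missing.
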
   

\begin{proof}
By Proposition~\ref{sec1-prop1}, the action is induced by a global vector field $D$ such that either $D^p=D$ (the $\mu_p$-action case) or $D^p=0$ (the $\alpha_p$-action case).  I will only do the case when $D^p=0$, which corresponds to the $\alpha_p$ action. The other is similar and is ommited.

The map $\pi$ is purely inseparable of degree $p$. Therefore there is a factorization 
\[
\xymatrix{
X\ar[dr]_{\pi} \ar[rr]^{F} & & X^{(p)} \\
                            & Y \ar[ur]_{\delta} &
}
\]
where $F \colon X \rightarrow X^{(p)}$ is the relative FrobeniouFrobenius. Since $X$ has a dualizing sheaf, $X^{(p)}$ also has a dualizing sheaf and hence since $\delta$ is finite, $Y$ has a dualizing sheaf $\omega_Y$ as well. 

The proof of the theorem will be in several steps.

\textbf{Step 1.} I will show that $Y$ has Gorenstein singularities in codimension 1. 

Let $U =X-Fix(D)$, where $Fix(D)$ is the fixed locus of $D$. Then $U$ is open and nonempty. Let $V=\pi(U)\subset Y$. Since $D$ has no fixed points in $U$, the restriction $\pi \colon U \rightarrow V$ is faithfully flat. Let $P \in U$ be a Gorenstein point and $Q=\pi(P)\in V$. I will show that $Q\in V$ is also Gorenstein. Let $A=\mathcal{O}_{X,P}$ and $B=\mathcal{O}_{Y,Q}$. Then $B$ is Gorenstein if and only if 
$\mathrm{injdim}(B)=\dim B$. Let $n=\dim B$. Then $n=\dim A$ and since $A$ is Gorenstein, $\mathrm{injdim}(A)=n$. Let $M$ be a $B$-module. Since $B \rightarrow A$ is faithfully flat, \[
\mathrm{Ext}^{n+1}_B(M,B)\otimes_BA=\mathrm{Ext}^{n+1}_A(M\otimes_B A,A)=0,
\]
since $\mathrm{injdim}(A)=n$. Since $A$ is faithfully flat over $B$ it follows that $\mathrm{Ext}^{n+1}_B(M,B)=0$ and therefore $B$ is Gorenstein.

Let now $P \in X$ be a codimension 1 Gorenstein point that is also fixed by $D$.  Let $Q=\pi(P)$. Then 
\[
\hat{\mathcal{O}}_{X,P} \cong \frac{L[[x,y]}{(xy)},
\]
where $L$ is a coefficient field of $\hat{\mathcal{O}}_{X,P}$. Moreover, by Proposition~\ref{subsec2-prop3}, $D$ extends to a $k$-derivation $\hat{D}$ of $\hat{\mathcal{O}}_{X,P}$. Let $m_Q$, $m_P$ be the maximal ideals of $\hat{\mathcal{O}}_{Y,Q}$ and $\hat{\mathcal{O}}_{X,P}$, respectively. 

\textit{Case 1.} Suppose that $m_Q \hat{\mathcal{O}}_{X,P}=m_P$. In this case it is not hard to see that after a change of ccordinates, $\hat{\mathcal{O}}_{X,P} \cong L[[x,y]/(xy)$, and $m_Q=(x,y)$, i.e., 
$x,y \in \hat{\mathcal{O}}_{Y,Q}$. Therefore, $\hat{D}=f(x,y)D_L$, where $D_L$ is a derivation of $L$. Then 
\[
\hat{\mathcal{O}}_{Y,Q}\cong \frac{M[[x,y]]}{(xy)},
\]
where $M=L^{D_L}=\{a\in L |\; D_L(a)=0\}$. Hence in this case, $\hat{\mathcal{O}}_{Y,Q}$ is a normal crossings singularity and therefore Gorenstein.

\textit{Case 2.} Suppose that $m_Q \hat{\mathcal{O}}_{X,P}\not= m_P$. Then since $\pi$ has degree $p$, the natural map $ \hat{\mathcal{O}}_{Y,Q} \rightarrow \hat{\mathcal{O}}_{X,P}$ is an isomorphism. Therefore $L \subset \hat{\mathcal{O}}_{Y,Q}$. Then after a change of coordinates
\[
\hat{D}=xf(x)\frac{\partial}{\partial x}+yg(y)\frac{\partial}{\partial y}.
\]
Then if $(f(x),g(y) )\not= (0,0)$, then 
\[
\hat{\mathcal{O}}_{Y,Q}=\frac{L[[x^p,y^p]]}{(x^py^p)},
\]
and if one of $f(x)$, $g(y)$ is zero, then \[
\hat{\mathcal{O}}_{Y,Q}\cong \frac{L[[x^p,y]]}{(x^py)}.
\]
In any case, $Q \in Y$ is also normal crossings and hence Gorenstein.

\textbf{Step 2.} Since $X$ satisfies Serre's condition $S_2$, by Proposition~\ref{sec2-prop3} $Y$ also satisfies it. Hence since both $X$ and $Y$ are Gorenstein in codimension 1 and the formula claimed in the theorem is determined over an open set of codimension $\geq 2$, it suffices to assume that both $X$ and $Y$ are Gorenstein.

From now on I assume that $X$ and $Y$ are Gorenstein. Then from the proof of Theorem~\ref{sec3-th1} it follows that
\[
\omega_X=\pi^{\ast}\omega_Y \otimes \pi^{!}\mathcal{O}_Y,
\]
where $\pi^{!}\mathcal{O}_Y=\mathcal{H}om_Y(\pi_{\ast}\mathcal{O}_X,\mathcal{O}_Y)$. Let $f_D \in H^0(X,\pi^{!}\mathcal{O}_Y)$ be the section defined locally by $f_D(a)=D^{p-1}a$. This defines an injection
\[
\sigma \colon \mathcal{H}om_X(\pi^{!}\mathcal{O}_Y,\mathcal{O}_X) \rightarrow \mathcal{O}_X.
\]
Hence $\sigma$ induces an isomorphism of $(\pi^{!}\mathcal{O}_Y)^{\ast}$ with a principal ideal sheaf of $\mathcal{O}_X$. In fact I will show that $\sigma$ induces an isomorphism of $(\pi^{!}\mathcal{O}_Y)^{\ast}$ with $I_{\mathrm{Fix}}^{[p-1]}$. Recall that by its definition, $I_{\mathrm{Fix}}=(D(\mathcal{O}_X))$. 

\textbf{Claim 1.} \[
(D(\mathcal{O}_X))^{p-1} \subset \mathrm{Im}(\sigma).
\]

This  can be checked locally. So suppose that $X$ and $Y$ are affine. Say, $X=\mathrm{Spec}A$ and $Y=\mathrm{Spec}B$, where $B=A^D=\{a\in A|\; Da=0\}$. Since $X$ and $Y$ are Gorenstein, $\pi^{!}\mathcal{O}_Y$ is invertible. Let $g \in \mathrm{Hom}_B(A,B)$ be a generator as a free $A$-module. Then there exists $\alpha_0 \in A$ such that $f_D=\alpha_0 \cdot g$ and therefore $\mathrm{Im}(\sigma)=(\alpha_0)$.

\textbf{Claim 1.1} For any $a_1,a_2, \ldots,a_p \in A$, 
\begin{gather}\label{claim-1-1}
g(a_0a_1\cdots a_p)=
-2\sum_{1\leq i \leq p}a_iDa_1\cdots \hat{D}a_i\cdots Da_p +\\
\sum_{k=1}^{p-1}\left(\sum_{1\leq i_1\leq \cdots \leq i_k \leq p}(-1)^k(2k-1)a_{i_1}\cdots a_{i_k}g(a_o\cdots \hat{a}_{i_1}\cdots 
\hat{a}_{i_k} \cdots a_p)\right).\nonumber
\end{gather} 

I proceed to prove the claim. According to Leibniz formula for derivations
\begin{gather}\label{Leibniz}
g(a_0a_1\cdots a_p)=D^{p-1}(a_1\cdots a_p)=\sum_{k_1+k_2+\cdots k_p=p-1}\binom{p-1}{k_1,\cdots,k_p}D^{k_1}a_1\cdots D^{k_p}a_p.
\end{gather}
From this it follows that in order to prove the claim it sufffices to show that the terms 
\begin{equation}\label{term-to-compare}
a_{i_1}\cdots a_{i_k}D^{\nu_{k+1}}a_{i_{k+1}}\cdots D^{\nu_p}a_{i_p}
\end{equation}
appear with the same coefficient ($\mod p$)  on both sides of the equation~\ref{claim-1-1}, for all $1\leq i_s\leq p$, $1\leq s \leq k$, $\nu_{k+1}+\cdots +\nu_p=p-1$. 

Suppose that $k=1$. Then the term $a_{i}Da_1\cdots \hat{D}a_i \cdots Da_p$ appears $p-1=-1 \mod p$ times on the left hand side of~\ref{claim-1-1} and $-2-(p-1)=-p-1=-1 \mod p$ on the right hand side of~\ref{claim-1-1}.

Suppose that $k \geq 2$. On the left hand side of~\ref{claim-1-1}, the term~\ref{term-to-compare} appears $\lambda=\binom{p-1}{\nu_{k+1},\ldots,\nu_p}$ times. On the right hand side it appears 
\[
\left(\sum_{s=1}^k(-1)^s(2s-1)\binom{k}{s}\right)\lambda.
\]
I will show that 
\begin{equation}\label{combinatorial-1}
\sum_{s=1}^k(-1)^s(2s-1)\binom{k}{s}=1
\end{equation}
and therefore~\ref{claim-1-1} holds. First notice that from the formula
\[
0=\left((1+(-1)\right)^k=\sum_{s=0}^{k}(-1)^s\binom{k}{s}
\]
it follows that
\begin{equation}\label{comb-2}
\sum_{s=1}^{k}(-1)^s\binom{k}{s}=-1.
\end{equation}
Now let
\[
f(x)=(1-x)^k=\sum_{s=0}^k(-1)^sx^s\binom{k}{s}.
\]
Then \[
f^{\prime}(x)=\sum_{s=1}^k(-1)^ssx^{s-1}\binom{k}{s}
\]
and therefore
\[
f^{\prime}(1)=\sum_{s=1}^k(-1)^ss\binom{k}{s}
\]
But since $f^{\prime}(1)=0$, it follows that
\begin{equation}\label{comb-3}
\sum_{s=1}^k(-1)^ss\binom{k}{s}=0.
\end{equation}
Taking into consideration~\ref{comb-2} and~\ref{comb-3}, it follows that
\[
\sum_{s=1}^k(-1)^s(2s-1)\binom{k}{s}=2\sum_{s=1}^k(-1)^ss\binom{k}{s}-\sum_{s=1}^{k}(-1)^s\binom{k}{s}=0+1=1.
\]
This proves~\ref{combinatorial-1} and hence the Claim 1.1.

\textbf{Claim 1.2.} For any $a \in A$,
\begin{gather}\label{claim-1-2}
\left(Da\right)^{p-1}=-\left(a^{p-1}g(1)+\sum_{k=1}^{p-1}a^{p-1-k}g(a^k)\right)a_0.
\end{gather}
This relation shows that $I_{\mathrm{Fix}}^{[p-1]}\subset \mathrm{Im}(\sigma)\cong \mathcal{H}om_X(\pi^{!}\mathcal{O}_Y,\mathcal{O}_X)$.

In the formula~\ref{claim-1-1} put $a_1=a_0^{p-1}$ and $a_s=a^{s-1}$, for $2 \leq s \leq p$. Then 
\begin{gather}\label{form-after-1-1}
g(a_0a_1\cdots a_p)=a_0^pa^{\frac{p(p-1)}{2}}g(1)
\end{gather}
and
\[
a_{i_1}a_{i_2}\cdots a_{i_s}g(a_o\cdots \hat{a}_{i_1}\cdots \hat{a}_{i_s} \cdots a_p)=
\begin{cases}
a_0^pa^{\frac{p(p-1)}{2}-k}g(a^k), & \text{if} \; i_1 \not= 1 \\
a_0^{p-1}a^{\frac{p(p-1)}{2}-k}g(a_0a^k), & \text{if} \; i_1 = 1 
\end{cases}
\]
where $0 \leq k \leq p-1$ is such that $\sum_{\nu = 1}^s(i_{\nu}-1)=p\delta-k$, $\delta \geq 0$. The claim will be proved by counting the times ($\mod p$) that the terms $a_0^pa^{\frac{p(p-1)}{2}-k}g(a^k)$ and $a_0^{p-1}a^{\frac{p(p-1)}{2}-k}g(a_0a^k)$ appear in~\ref{claim-1-1}.

Let $d_k, \tilde{d}_k$ be the number of times that $a_0^{p-1}a^{\frac{p(p-1)}{2}-k}g(a_0a^k)$ and $a_0^pa^{\frac{p(p-1)}{2}-k}g(a^k)$ appear in the right hand side of~\ref{claim-1-1}, respectively. Then $d_k$ is the number of terms of the form $a_{i_1}a_{i_2}\cdots a_{i_{\nu}}g(a_o\cdots \hat{a}_{i_1}\cdots \hat{a}_{i_{\nu}} \cdots a_p)$ such that $2 \leq i_1 \leq i_2 \leq \cdots \leq i_{\nu} \leq p$ and $i_1+i_2+\cdots +i_{\nu}=-k \mod p$. Then it is not difficult to see that
\begin{gather}\label{d-k}
d_k=\sum_{s=2}^{p}(-1)^s(2s-1)d_k(s) \\
\tilde{d}_k=\sum_{s=1}^{p-1}(-1)^{s+1}(2s-1)d_k(s) \\\nonumber
\end{gather}
where 
\begin{gather*}
d_k(\nu)=\#\{(i_1,i_2,\ldots,i_{\nu}) |\; i_1+i_2+\cdots i_{\nu}=\nu-k\mod p, \;  2 \leq i_1 <i_2< \cdots <i_{\nu} \leq p \}=\\
\#\{(i_1,i_2,\ldots,i_{\nu}) |\; i_1+i_2+\cdots i_{\nu}=-k\mod p, \;  1 \leq i_1 <i_2< \cdots <i_{\nu} \leq p-1 \}.
\end{gather*}

\textbf{Claim 1.2.1.} 
\begin{enumerate}
\item $d_k=0 \mod p$,
\item $\tilde{d}_k=-2 \mod p$, $k\geq 1$,
\item $\tilde{d}_0=2 \mod p$.
\end{enumerate}
I will prove the first part of the claim. The other is proved similarly and is left to the reader. 

The first step is to get an explicit formula for $d_k(\nu)$. Let \[
\delta_k(\nu)=\#\{(x_1,x_2,\ldots,x_{\nu})|x_i \not= x_j, \text{for}\; i\not= j, x_1+x_2+\cdots x_{\nu}=k\mod p\}.
\]
It is not difficult to see that there is the following inductive formula
\[
\delta_k(\nu)=\frac{\nu!}{p}\binom{p}{\nu}-\nu\delta_k(\nu-1).
\]
From this it immediately follows that 
\begin{gather}\label{delta}
\delta_k(\nu)=\frac{\nu!}{p}\sum_{s=0}^{\nu-1}(-1)^s\binom{p}{\nu-s}.
\end{gather}
Let \[
\Omega_{\nu}=\{(x_1,x_2,\ldots,x_{\nu})|x_i \not= x_j, \text{for}\; i\not= j, x_1+x_2+\cdots x_{\nu}=k\mod p\}.
\]
Then $S_{\nu}$ acts on $\Omega_{\nu}$ by $\sigma \cdot (x_1,x_2,\ldots,x_{\nu})=(x_{\sigma(1)},x_{\sigma(2)},\ldots,x_{\sigma(\nu)})$, $\sigma \in S_{\nu}$. It is easy to see that the action is free and that all the orbits have the same number of elements. Moreover, in the orbit of any element $(x_1,x_2,\ldots,x_{\nu})$, there exists a unique $\sigma \in S_{\nu}$ such that $x_{\sigma(1)}<x_{\sigma(2)}< \cdots < X_{\sigma(\nu)}$. Then
\begin{gather}\label{d-k-1}
d_k(\nu)=\frac{|\Omega_{\nu}|}{|S_{\nu}|}=\frac{1}{\nu!}\delta_k(\nu)= \frac{1}{p}\sum_{s=0}^{\nu-1}(-1)^s\binom{p}{\nu-s}.
\end{gather}

From the equations~\ref{d-k},~\ref{d-k-1} it follows that $d_k(\nu)$, and hence $d_k$ is a sum of terms of the form $\binom{p}{s}$. Hence there $\lambda_i \in \mathbb{Z}$ such that \[
d_k=\sum_{s=1}^{p-1}\lambda_i\binom{p}{s}.
\]
I will show that $\lambda_s=(-1)^{s+1}\frac{1}{p}[p^2-s^2]$.

From the equation~\ref{d-k} and~\ref{d-k-1} it follows that
\begin{gather*}
\lambda_k=\frac{(-1)^k}{p}\left((2k+1)+(2k+3)+\cdots +(2k+2p-2k-1)\right)=\\
=\frac{(-1)^k}{p}\left(2k(p-k)+(p-k)^2\right)=\frac{(-1)^k)}{p}[p^2-k^2].
\end{gather*}
Hence
\begin{gather}\label{d-k-final}
d_k=\frac{1}{p}\sum_{s=1}^{p-1}(-1)^s(p^2-s^2)\binom{p}{s}=\\
p\sum_{s=1}^{p-1}(-1)^s\binom{p}{s} +\frac{1}{p}\sum_{s=1}^{p-1}(-1)^{s+1}s^2\binom{p}{s}.\nonumber
\end{gather}
Now considering that $\binom{p}{s}=\binom{p}{p-s}$, it follows that the coefficient of $\binom{p}{s}$ in the second summand is 
\[
(-1)^{s+1}s^2+(-1)^{ps+1}(p-s)^2=(-1)^{s+1}(2ps-p^2).
\]
Hence from~\ref{d-k-final} it follows that $d_k=0 \mod p$, as claimed. Therefore there are no terms of the form  $a_0^{p-1}a^{\frac{p(p-1)}{2}-k}g(a_0a^k)$ in the right hand side of~\ref{claim-1-1},

Next I consider the terms of the form $a_sDa_1\cdots \hat{D}a_s \cdots Da_p$. A straightforward calculation shows that
\begin{gather*}
a_sDa_1\cdots \hat{D}a_s \cdots Da_p =\frac{1}{s-1}a_0^{p-2}aa^{\frac{(p-1)(p-2)}{2}}Da_0(Da)^{p-2},
\end{gather*}
for $s \geq 2$. Hence
\begin{gather*}
\sum_{i=2}^pa_iDa_1\cdots \hat{D}a_i\cdots Da_p=
\left(1+\frac{1}{2}+\frac{1}{3}+\cdots+\frac{1}{p-1}\right)a_0^{p-2}aa^{\frac{(p-1)(p-2)}{2}}Da_0(Da)^{p-2}=\\
\left(1+2+\cdots (p-1)\right)aa^{\frac{(p-1)(p-2)}{2}}Da_0(Da)^{p-2}=\\
\frac{p(p-1)}{2}aa^{\frac{(p-1)(p-2)}{2}}Da_0(Da)^{p-2} =0 \; \text{mod p}
\end{gather*}
Therefore there are also no terms of the form 
\[
\sum_{i=2}^pa_iDa_1\cdots \hat{D}a_i\cdots Da_p
\]
in the right hand side of~\ref{claim-1-1}. Moreover 
\begin{gather}\label{a1da2}
a_1Da_2\cdots Da_p=-a_0^{p-1}a^{\frac{(p-1)(p-2)}{2}}(Da)^{p-1} 
\end{gather}
Now combining Claim 1.2.1,~\ref{form-after-1-1},~\ref{a1da2} and Claim 1.1 we get Claim 1.2.

Hence it has been proved that locally
\[
I_{\mathrm{fix}}^{p-1}=(D(A))^{p-1}=\left(a^{p-1}g(1)+\sum_{k=1}^{p-1}a^{p-1-k}g(a^k)\right)a_0\subset (a_0)=\subset (\pi^{!}\mathcal{O}_Y)^{\ast}.
\]
In order to prove equality of their double duals it suffices it check it at codimension one points of $X$. Hence we may assume that $A$ is the completion of the local ring of $X$ at a codimension one point. Then in order to prove equality I will explicitly give a generator $g$ of $\pi^{!}\mathcal{O}_Y=\mathrm{Hom}_B(A,B)$ and study the relation between $I_{fix}^{[p-1]}$ and $(\pi^{!}\mathcal{O}_Y)^{\ast}$. 

\textbf{Case 1.} $A$ is the completion of the local ring of $X$ at a smooth point of codimension one.

In this case, $A\cong L[[x]]$, where $L$ is a coefficient field of $A$. Then either $B=L[[x^p]]$ and $D=f(t)\frac{d}{dt}$, or $B=M[[x]]$ and $D$ is induced by an $M$-derivation of $L$, where $M \subset L$ is purely inseparable of degree $p$. In the first case $g=\frac{d^{p-1}}{dt^{p-1}}$ is a generator of  $\mathrm{Hom}_B(A,B)$ as a free $A$-module of rank one. In the second case, $D^{p-1}$ is a generator.

\textbf{Case 2.} $A$ is the completion of the local ring of $X$ at a normal crossing point of codimension one.

In this case $A=L[[x,y]]/(xy)$ and moreover,  as was explained in Step 1 of the proof, one of the following happens. 
\begin{enumerate}
\item  $D=xf(x)\frac{d}{dt}+yg(y)\frac{d}{dt}$  and $B=\frac{L[[x^p,y^p]]}{(x^py^p)}$, if $(f(x),g(y))\not=(0,0)$, or $B=\frac{L[[x^p,y]]}{(x^py)}$, if $f(y)=0$.
\item $D$ is induced by a derivation of $L$ and $B=L^D[[x,y]]/(xy)$.
\end{enumerate}
In the first case, $g=D_0^{p-1}-id$ is a generator of $\mathrm{Hom}_B(A,B)$ as a free $A$-module of rank one, where $D_0=x\frac{\partial}{\partial x}+y\frac{\partial}{\partial y}$, or $D=x\frac{\partial}{\partial x}$. In the second case, $D^{p-1}$ is a generator. The verification of these statements are rather tedious but straightforward and they are left to the reader.

Suppose we are in case 1. Then $A=L[[x]]$ and for $a=x$, $a^{p-1}g(1)+\sum_{k=1}^{p-1}a^{p-1-k}g(a^k)$ is a unit in $A$. Hence 
\[
(D(A))^{p-1}=\left(a^{p-1}g(1)+\sum_{k=1}^{p-1}a^{p-1-k}g(a^k)\right)a_0=(a_0)=\mathrm{Hom}_B(A,B)=(\pi^{!}\mathcal{O}_Y)^{\ast}.
\]

Suppose we are in case 2. Suppose that $D=xf(x)\frac{d}{dt}+yg(y)\frac{d}{dt}$. Then explicit calculations in~\ref{claim-1-2} show that 
 \begin{gather*}
I_{\mathrm{fix}}^{p-1}=(D(A))^{p-1}=(xy,x^{p-1}f(x)^{p-1},y^{p-1}g(y)^{p-1}) ,\\
(\pi^{!}\mathcal{O}_Y)^{\ast}=(xy,f(x)^{p-1}+g(y)^{p-1}).
\end{gather*}
Let $J=(xy,x^{p-1},y^{p-1})$. Then
\[
I_{\mathrm{fix}}^{p-1}=J \cdot  (\pi^{!}\mathcal{O}_Y)^{\ast}=J \otimes (\pi^{!}\mathcal{O}_Y)^{\ast}
\]
and therefore 
\[
J^{\ast\ast} \otimes (\pi^{!}\mathcal{O}_Y)^{\ast} =I_{\mathrm{fix}}^{[p-1]}.
\]
Now it is not hard to see that $J^{\ast\ast}=A$ and hence \[
 (\pi^{!}\mathcal{O}_Y)^{\ast} =I_{\mathrm{fix}}^{[p-1]}.
\]
This concludes the proof of the theorem.

\end{proof}                                                                                                                                                                                                                                                                                                                                                                                                                                                                                                                                                                                                                                                                                          
% !TEX root = /Users/nikos/Dropbox/papers/inseparable-maps/paper.tex

\end{document}